\numberwithin{equation}{section}
\theoremstyle{definition}
\newtheorem{definition}{\textbf{Definition}}[section]
\newtheorem{remark}[definition]{\textbf{Remark}}
\theoremstyle{plain}
\newtheorem{proposition}[definition]{\textbf{Proposition}}
\newtheorem{corollary}[definition]{\textbf{Corollary}}	
\newtheorem{lemma}[definition]{\textbf{Lemma}}
\newtheorem{theorem}[definition]{\textbf{Theorem}}
\theoremstyle{remark}
\theoremstyle{plain}
\newtheorem{question}[definition]{Question}
\newcommand{\RR}{\mathbb{R}}
\newcommand{\ZZ}{\mathbb{Z}}
\newcommand{\e}{\mathbf{e}}
\title{Discrete Gaussian Distributions via Theta Functions}
\author{Daniele Agostini}
\address{
  Humboldt-Universit\"{a}t zu Berlin\\
  Institut  f\"{u}r Mathematik\\
  Unter den Linden 6, 10099, Berlin, Germany}
\email[]{daniele.agostini@math.hu-berlin.de}
\author{Carlos Am\'endola}
\address{
	Technische Universit\"{a}t  M\"{u}nchen\\
	Zentrum Mathematik\\
	 Boltzmannstra{\ss}e 3, 85748, Garching, Germany}
\email[]{carlos.amendola@tum.de}
\begin{document}

\begin{abstract}
We study a discrete analogue of the classical multivariate Gaussian distribution. It is supported on the integer lattice and is parametrized by the Riemann theta function. Over the reals, the discrete Gaussian is characterized by the property of maximizing entropy, just as its continuous counterpart. We capitalize on the theta function representation to derive statistical properties. Throughout, we exhibit strong connections to the study of abelian varieties in algebraic geometry.
\end{abstract}

\maketitle

\section{Introduction}
Based on the principle of maximum entropy, it is of great interest to find, for a given class of probability distributions, the ones that maximize entropy. For a random variable $X$, with density $f(x)$, its \textit{differential entropy} as defined by Claude Shannon is
\begin{equation}\label{Def:entropy}
H[X]= \mathbb{E}[-\log(f(X))]
\end{equation}
and it is a measure of the information contained in the distribution. It is well known (see e.g. \cite{ENTR, PROB}) that the maximum entropy distribution for densities supported over a finite set or a bounded interval in $\mathbb{R}$ is given by the corresponding uniform distribution. Since there is no such distribution supported over the whole real line $\mathbb{R}$, some constraints are needed to make sure there exists a maximum entropy distribution. If one fixes the mean and the variance, the distribution with maximum entropy is the Gaussian.

This fact motivates the definition of a \textit{discrete Gaussian distribution} as the maximum entropy probability distribution supported over the integers $\mathbb{Z}$ with fixed mean and variance. In their compilation of maximum entropy distributions, Lisman and Van Zuylen in \cite{Lis} say that it ``cannot be presented in plain terms, because there is no summation procedure for the series involved''. Kemp in \cite{KEMP} proposes a somewhat explicit form for the discrete Gaussian, and later Szablowski in \cite{SZAB} observed that the normalization constant can be parametrized in terms of the Jacobi theta functions.

In this paper, we simplify and extend part of their work to higher dimensions, working over the lattice $\ZZ^g$. Central to our approach is the Riemann theta function \cite[Section 8.5]{BL},\cite[Section II.1]{MUM1}. This is the holomorphic function 
\begin{equation}
\theta \colon \mathbb{C}^g \times \mathbb{H}_g \to \mathbb{C}, \qquad \theta(u,B)= \sum_{n\in\mathbb{Z}^g} \e\left( -\frac{1}{2} n^t B n +  n^tu  \right)
\end{equation}
where $\mathbf{e}(x) = e^{2\pi  x}$ and 
$\mathbb{H}_g$ is the  Siegel right half-space, consisting of symmetric $g\times g$ complex matrices whose real part is positive definite. Using this function, we can define a family of complex probability distributions on $\mathbb{Z}^g$, that we call \textit{complex discrete Gaussian distributions}.
\begin{definition}[Complex discrete Gaussian distribution]
  Fix $(u,B)\in \mathbb{C}^g\times \mathbb{H}_g$ such that $\theta(u,B)\ne 0$. We define the discrete Gaussian distribution with parameters $(u,B)$ as the complex-valued probability distribution on $\mathbb{Z}^g$ given by
  \begin{equation}
  p_{\theta}(n; u,B) = \frac{1}{\theta(u,B)}\e\left( -\frac{1}{2} n^tB n + n^tu \right) \qquad \text{ for all } n\in \mathbb{Z}^g.
\end{equation}
\end{definition}
Observe that when $u$ and $B$ are real (that is, when their imaginary part is zero) the quantities $\e\left(-\frac{1}{2} n^tB n + n^tu\right)$ are positive real numbers, so that $p_{\theta}(n;u,B)$ is a standard real-valued probability density function on the integer lattice $\mathbb{Z}^g$. One of our main results, proven in Section 2, is that this probability distribution is the unique one that maximizes entropy among all those with same mean and covariance matrix, so that it justifies the name discrete Gaussian. 

As a consequence, discretizing the kernel of the continuous Gaussian gives indeed a distribution maximizing entropy (for its own mean and variance). This means that our definition coincides with the one usually studied in the computer science literature \cite{AhaReg, MicReg, OdedNoah, Reg}. Our observation creates a bridge to papers in this field on the topic and in particular to applications of discrete Gaussians. In fact, they play a fundamental role in \textit{lattice-based cryptography}, with one key problem being how to sample from these distributions efficiently \cite{AgaReg, GePeV}. It is our hope that our approach can be used to shed light on this problem (e.g. through numerical evaluations of the Riemann theta function).

The Riemann theta function is a core object in the study of complex abelian varieties: these are projective varieties that have at the same time the structure of an algebraic group, and they are ubiquitous in algebraic geometry. We introduce them, their parameter spaces and related concepts in Section 3. We exhibit several connections between the statistical properties of discrete Gaussians and the geometry of abelian varieties. For instance, we show in Proposition \ref{independence} that independent discrete Gaussians correspond to products of abelian varieties. 

In Section 4, we continue exploiting the representation via the Riemann theta function to study the properties of discrete Gaussians. For example, in Proposition \ref{explicitmoments} we give explicit formulas for the characteristic function and for the higher moments. In this section we also provide numerical examples where we illustrate simple modeling with discrete Gaussians and maximum likelihood estimation for them. 

Most interestingly, we show in Proposition \ref{quasiperiodicitydg} that \textit{discrete Gaussians can be interpreted, up to translation, as points on abelian varieties}, and this is our guiding principle. Moreover, this principle extends in Proposition \ref{equivalencegaussians} to families of both abelian varieties and discrete Gaussians, thus obtaining a natural relation between the set of all discrete Gaussian distributions on $\mathbb{Z}^g$ and the universal family of abelian varieties over the moduli space $\mathcal{A}_g$ \cite[Section 8.2]{BL}.

As a consequence of this principle, every statistical function of the discrete Gaussian that is translation invariant, such as central moments and cumulants, gives a well defined function on abelian varieties. In Section 5, we use this to define statistical maps of abelian varieties into projective space. These realize abelian varieties as the moment varieties of the discrete Gaussian distribution. We go on to study the geometry of these maps and we draw statistical consequences, particularly about moment identifiability.  We show in Theorems \ref{thirdmomentellcurves} and \ref{thirdmomentabsurf} that complex discrete Gaussians on $\mathbb{Z}$ or $\mathbb{Z}^2$ with the same parameter $B$ are completely determined by the moments up to order three. Moreover, for discrete Gaussians on $\mathbb{Z}$, we show in Proposition \ref{equationcubic} how to use the cubic equation of the corresponding elliptic curve to obtain explicit formulas for the higher moments in terms of the first three.

In summary, in this paper we establish a connection between algebra and statistics that echoes and complements similar results in the field of algebraic statistics \cite{DSS, PIST}. On the one hand, finite discrete exponential families have been shown to correspond to \textit{toric varieties} \cite{DSS}. On the other hand, some continuous exponential families have been linked to what are called \textit{exponential varieties} \cite{EXPVAR}. The present paper now relates some infinite discrete exponential families to abelian varieties.

\section{The discrete Gaussian distribution and the Riemann theta function} 

We start by extending the definition of the univariate discrete Gaussian in $\ZZ$ to a multivariate discrete Gaussian in $\ZZ^g$. The (continuous) multivariate Gaussian distribution given by
\begin{equation}
\label{eq:gaussian}
f(x)=\frac{1}{\sqrt{ \det(2 \pi
    \Sigma)}}e^{-\frac{1}{2} (x-\mu)^t \Sigma^{-1} (x-\mu)},
\end{equation}
is the maximum entropy distribution supported over $\RR^g$ where the mean vector $\mu \in \mathbb{R}^g$ and the symmetric positive definite $g \times g$ covariance matrix $\Sigma$ are fixed. It is thus natural to attempt the following definition.

\begin{definition}[Discrete Gaussian distribution]
The $g$-dimensional discrete Gaussian distribution is the maximum entropy probability distribution supported over $\mathbb{Z}^g$ with fixed mean vector $\mu \in \RR^g$ and covariance matrix $\Sigma \succ 0$.
\end{definition}

A priori, such a distribution may not exist nor be unique. Our main result in this section is that existence and uniqueness do hold.  

To obtain this distribution we will use the classical Riemann theta function; we recall now its definition. For any positive integer $g$ we denote by $\mathbb{H}_g$ the Siegel right half-space, which consists of symmetric $g\times g$ complex matrices whose real part is positive definite: $$\mathbb{H}_g = \{ B \in \mathbb{C}^{g\times g} \,|\, B^t = B,\, \operatorname{Re} B \succ 0 \}.$$ Then the \textit{Riemann theta function} is defined as
\begin{equation}
\theta \colon \mathbb{C}^g \times \mathbb{H}_g \to \mathbb{C}, \qquad \theta(u,B)= \sum_{n\in\mathbb{Z}^g} \e\left( -\frac{1}{2} n^t B n +  n^tu  \right)
\end{equation}
where $\mathbf{e}(x) = e^{2\pi x}$. 

Observe that the series is convergent, thanks to the condition $\operatorname{Re} B \succ 0$: in particular this shows that the function is holomorphic. We denote the zero locus of this function by $\Theta_g = \{ (u,B) \in \mathbb{C}^g\times \mathbb{H}_g \,|\, \theta(u,B) = 0 \}$; this is sometimes called the \textit{universal theta divisor}. 

\begin{remark}
	In order to emphasize the connection between geometry and statistics, we use a different notation than the usual one for the Riemann theta function. Classically, the theta function has parameters $z\in \mathbb{C}^g$ and $\tau\in \mathcal{H}_g$, where $\mathcal{H}_g$ is the Siegel upper half space of $g\times g$ symmetric complex matrices with positive definite imaginary part. To pass from our notation to the classical one, one just needs to take $z=-iu ,\tau = iB$.
\end{remark}

Now, we define the probability distributions we are going to work with.

\begin{definition}[Complex Discrete Gaussian distribution]
  For $(u,B)\in ( \mathbb{C}^g\times \mathbb{H}_g ) \setminus \Theta_g$, we define the discrete Gaussian distribution with parameters $(u,B)$ as the complex-valued probability distribution on $\mathbb{Z}^g$ given by
  \begin{equation}
  p_{\theta}(n; u,B) = \frac{1}{\theta(u,B)}\e\left( -\frac{1}{2} n^tB n + n^tu \right) \qquad \text{ for all } n\in \mathbb{Z}^g.
\end{equation}
We will denote by $X_{(u,B)}$ a random variable with values on $\mathbb{Z}^g$ and distribution given by $p_{\theta}(n;u,B)$. 
\end{definition}

We also give a name to the set of all complex discrete Gaussian distributions. We will see in Remark \ref{remarkequivalence} that this set has a natural structure of a complex manifold.

\begin{definition}[The set $\mathcal{G}_g$]\label{setdgaussians}
For a fixed $g \geq 1$, we denote by $\mathcal{G}_g$ the set of all complex discrete Gaussian distributions on $\mathbb{Z}^g$.
\end{definition}

Observe that when $u$ and $B$ are real (that is, when their imaginary part is zero) the quantities $\e\left(-\frac{1}{2} n^tB n + n^tu\right)$ are positive real numbers, so that $p_{\theta}(n;u,B)$ is a standard real-valued probability density function on $\mathbb{Z}^g$. We will show that this probability distribution is the unique one that maximizes entropy among all those with same mean and covariance matrix. In particular, the two definitions of discrete Gaussian agree in this case.

\begin{theorem}\label{mainthm}
Fix a vector $\mu \in \mathbb{R}^g$ and a positive definite symmetric matrix $\Sigma \in \operatorname{Sym}^2\mathbb{R}^{g}$. Then there exists a unique distribution supported on $\ZZ^g$ with mean vector $\mu$ and covariance matrix $\Sigma$ that maximizes entropy. This distribution is given by

\begin{equation} \label{discGauss}
p_{\theta}(n;u,B) = \frac{1}{\theta(u,B)}\e\left( -\frac{1}{2}n^tB n + n^tu \right) 
\end{equation}
for some unique $u \in \mathbb{R}^g$ and $B \in \mathbb{H}^g$ real.
\end{theorem}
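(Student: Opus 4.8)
The plan is to treat this as the standard maximum-entropy problem for an exponential family whose sufficient statistics are the monomials $n_i$ and $n_in_j$, and then to supply the one genuinely analytic input: the existence of real moment-matching parameters. First I would write out the constrained optimization explicitly: among all probability distributions $p$ on $\ZZ^g$ with $\sum_n p(n)=1$, $\sum_n n\,p(n)=\mu$ and $\sum_n (n-\mu)(n-\mu)^t p(n)=\Sigma$, maximize $H[p]=-\sum_n p(n)\log p(n)$. Since all three families of constraints are linear in $p$, the feasible set is convex, and since $H$ is strictly concave, a maximizer --- if one exists --- is automatically unique. A formal Lagrange-multiplier computation (differentiating in $p(n)$) shows that any critical point must satisfy $\log p(n)=\text{const}+(\text{linear in }n)+(\text{quadratic in }n)$, i.e. $p(n)\propto \e\left(-\tfrac12 n^tBn+n^tu\right)$ for real $u,B$. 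This already predicts the discrete Gaussian form and motivates the next step.

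To make existence and this characterization rigorous without wrestling directly with the non-compact optimization, I would invoke the information inequality. Suppose $q=p_{\theta}(\,\cdot\,;u,B)$ is a feasible discrete Gaussian, i.e. one with real parameters ($B\succ0$, so $q(n)>0$ for all $n$) whose mean is $\mu$ and covariance $\Sigma$. For any other feasible $p$, nonnegativity of the Kullback--Leibler divergence gives $0\le \sum_n p(n)\log\frac{p(n)}{q(n)}=-H[p]-\sum_n p(n)\log q(n)$. Because $\log q(n)$ is an affine function of the sufficient statistics $n_i$ and $n_in_j$, and $p$ and $q$ share all these first and second moments, one has $\sum_n p(n)\log q(n)=\sum_n q(n)\log q(n)=-H[q]$. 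Hence $H[p]\le H[q]$, with equality iff $p=q$. This simultaneously shows that such a $q$, if it exists, is the \emph{unique} entropy maximizer and that it has the claimed form, so the entire theorem reduces to producing $q$.

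Everything therefore hinges on the existence (and uniqueness) of real $(u,B)$ with $B\succ0$ whose discrete Gaussian realizes the prescribed moments, and this is the step I expect to be the main obstacle. I would phrase it as surjectivity of the moment map $(u,B)\mapsto(\mu(u,B),\Sigma(u,B))$, which up to the harmless factor $2\pi$ is the gradient of the strictly convex log-partition function $A(u,B)=\log\theta(u,B)$; here $\nabla A$ returns the mean and second-moment map, while $\nabla^2 A$ is the covariance of the sufficient statistics, positive definite because $\{n_i,n_in_j\}$ are affinely independent on $\ZZ^g$, which already yields uniqueness of $(u,B)$. Existence I would obtain dually, by minimizing the strictly convex function $A(u,B)-\langle(u,B),(\mu,\Sigma+\mu\mu^t)\rangle$ over the domain $\{B\succ0\}$: any critical point is a moment-matching parameter. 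The delicate part is the coercivity of this objective, which holds exactly when the target pair $(\mu,\Sigma+\mu\mu^t)$ lies in the interior of the convex hull of $\{(n,nn^t):n\in\ZZ^g\}$. Identifying this interior with the set of realizable $(\mu,\Sigma)$ --- and checking that the prescribed data lie there, where some care is needed near the boundary because of the lattice geometry (the attainable covariances are genuinely constrained by the fractional part of $\mu$, so the relevant condition is slightly finer than $\Sigma\succ0$) --- is the heart of the argument, whereas the strict-concavity and Gibbs-inequality steps are routine.
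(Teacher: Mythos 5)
Your architecture is the same as the paper's: the Gibbs/Kullback--Leibler step is exactly the paper's Corollary \ref{cor:maxent}, and the reduction of the whole theorem to surjectivity of the moment map $(u,B)\mapsto(\mu,\Sigma)$ is exactly how the paper assembles Theorem \ref{mainthm} from Corollary \ref{cor:bij}. Those parts of your argument are correct and complete.

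The step you flag as the heart of the argument --- showing that the prescribed $(\mu,\Sigma+\mu\mu^t)$ lies in the interior of $\operatorname{conv}\{(n,nn^t):n\in\ZZ^g\}$ --- is indeed the crux, and your suspicion that the relevant condition is strictly finer than $\Sigma\succ0$ is right; this step cannot be completed as the theorem is stated. Already for $g=1$: any distribution on $\ZZ$ with mean $\mu=\tfrac12$ satisfies $\operatorname{Var}(X)=\mathbb{E}\left[(X-\tfrac12)^2\right]\ge\min_{n\in\ZZ}(n-\tfrac12)^2=\tfrac14$, so for $0<\sigma^2<\tfrac14$ the feasible set is empty and no entropy maximizer exists, discrete Gaussian or otherwise. (More generally, writing $t$ for the fractional part of $\mu$, the attainable variances are exactly $\sigma^2>t(1-t)$, which is the interior of the convex hull of $\{(n,n^2)\}$, just as your coercivity analysis predicts.) The paper's own proof disposes of existence by citing the classical bijection between canonical parameters and the interior of the convex hull of the sufficient statistics, but then silently identifies that interior with all of $\{(\mu,\Sigma):\Sigma\succ0\}$, which is false. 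So the obstacle you could not remove is not a defect of your proof but of the statement: the correct hypothesis is that $(\mu,\Sigma+\mu\mu^t)$ lie in the interior of the convex hull of the sufficient statistics (equivalently, that $(\mu,\Sigma)$ be realizable by some fully supported distribution on $\ZZ^g$), and under that hypothesis your argument, like the paper's, goes through.
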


It is natural to ask how one can compute effectively such $u,B$ from given $\mu,\Sigma$. See Remark \ref{MLE} and Subsection \ref{rmk:comp}. For a explicit expression of the maximized entropy see Proposition \ref{prop:entropy}.
 
\begin{remark}
When $B$ has imaginary part zero, it is a positive definite real matrix. Then the function $n \mapsto -2\pi  \left( \frac{1}{2} n^tB n \right)$ is a negative definite quadratic form. Thus $p_\theta$ is a log-concave density (looking at its piecewise linear extension in $\mathbb{R}^g$), just as its continuous counterpart.  
\end{remark}

\begin{remark}
  Set $g=1$ and take two real parameters $u\in\mathbb{R}$ and $B \in \mathbb{H}_1 \cap \mathbb{R} = \mathbb{R}^+$. Then if we set
  $q = \e\left(  -B \right), \lambda = \e\left( -\frac{1}{2} B + u \right)$
we can rewrite
\begin{equation}
 \e\left(-\frac{1}{2}n^2B + nu\right) = \lambda^n q^{\frac{n(n-1)}{2}}.
\end{equation}
Therefore, our one-dimensional discrete Gaussian coincides with the one defined by Kemp in \cite{KEMP}.
\end{remark}

We see now that the density we are proposing is very special in the statistical sense. Indeed, it belongs to an \textit{exponential family}. 

Exponential families are of fundamental importance in statistics \cite{BROWN} and we briefly recall their definition here. Many common examples of distributions belong to an exponential family. An exponential family on a space $\mathcal{X}$ has density 
\begin{equation} \label{expfam}
p(\eta,x) = h(x) e^{\left\langle \eta, T(x) \right\rangle - A(\eta)} \qquad \text{ for } x\in \mathcal{X}
\end{equation}
where $\eta \in \mathbb{R}^n$ is a parameter, $T\colon \mathcal{X} \to \mathbb{R}^n$ is a measurable function, and $\langle \cdot , \cdot \rangle$ is the standard scalar product on $\mathbb{R}^n$.
In this case, $h$ is known as the \textit{base measure}, $T$ as the \textit{sufficient statistics} and $A$ as the \textit{log-partition function}. The space of canonical parameters is $\{ \eta \in \RR^n \vert A(\eta) < \infty \}$ and it is convex. The exponential family is \textit{regular} if the space of canonical parameters is a nonempty open set and it is \textit{minimal} if the image of $T$ does not lie in a proper affine subspace of $\RR^n$.

\begin{proposition} \label{thm:expfam}
Fix real parameters $u,B$. The density function $p_{\theta}(n,u,B)$ over $\ZZ^g$ in Theorem \ref{mainthm} belongs to a minimal regular exponential family of distributions. 
\end{proposition}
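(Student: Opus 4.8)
The plan is to put the density into the canonical form \eqref{expfam} by hand and then verify regularity and minimality directly from their definitions.

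First I would expand the exponent. Since $\e(x)=e^{2\pi x}$ and $B=(B_{ij})$ is symmetric,
\[
2\pi\left(-\tfrac12 n^t B n + n^t u\right) = \sum_{i}(2\pi u_i)\,n_i + \sum_{i}(-\pi B_{ii})\,n_i^2 + \sum_{i<j}(-2\pi B_{ij})\,n_i n_j .
\]
This displays $p_\theta$ in the form \eqref{expfam} with counting-measure base $h(n)\equiv 1$ on $\ZZ^g$, sufficient statistic
\[
T(n)=\bigl((n_i)_{1\le i\le g},\,(n_i^2)_{1\le i\le g},\,(n_in_j)_{1\le i<j\le g}\bigr)\in\RR^{N},\qquad N=g+\tbinom{g+1}{2},
\]
natural parameter $\eta$ read off from the coefficients $(2\pi u_i,\,-\pi B_{ii},\,-2\pi B_{ij})$ via an obvious linear isomorphism, and log-partition $A(\eta)=\log\theta(u,B)$. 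Because $h\equiv1$, the normalizer $\theta(u,B)=\sum_{n}e^{\langle\eta,T(n)\rangle}$ is literally the partition function, so $p_\theta$ is a member of this exponential family.

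For regularity I must check that the canonical parameter space $\mathcal N=\{\eta\mid A(\eta)<\infty\}$ is nonempty and open. Under the linear isomorphism above, $\mathcal N$ is the set of real $(u,B)$ for which the theta series converges. Convergence for $B\succ0$ is recorded in the excerpt, so $\mathcal N$ contains the image of $\RR^g\times\{B\succ0\}$; this is nonempty and, as the product of $\RR^g$ with the open positive-definite cone, open. It remains to rule out boundary points, i.e.\ to show the series diverges whenever $B$ is not positive definite, so that $\mathcal N$ equals this open set. When $B$ has a negative eigenvalue there is a direction along which $n^tBn\to-\infty$, giving infinitely many integer terms bounded below; when $B$ is only positive semidefinite one produces infinitely many integer points on which $n^tBn$ stays bounded (for an irrational null direction this uses Dirichlet's simultaneous approximation to place integer points near the relevant line). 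In either case the summands fail to decay and $A=\infty$.

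Finally, minimality requires that $\operatorname{Im}T$ not be contained in a proper affine subspace, equivalently that $1$ together with the components of $T$ — the degree-$\le2$ monomials $n_i$ and $n_in_j$ — be linearly independent as functions on $\ZZ^g$. Any relation $\sum_i a_i n_i+\sum_{i\le j}b_{ij}n_in_j=c$ valid for all $n\in\ZZ^g$ is a polynomial identity on the Zariski-dense subset $\ZZ^g\subset\CC^g$, hence forces every coefficient and $c$ to vanish; thus the statistics are affinely independent and the family is minimal. The only nonroutine step is the openness of $\mathcal N$: while convergence for $B\succ0$ is given and minimality is a formal polynomial-independence check, the divergence for a positive-semidefinite $B$ with an irrational isotropic direction genuinely needs a lattice-point/Diophantine argument rather than a one-line bound, and this is where I expect the main effort to go.
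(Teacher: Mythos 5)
Your decomposition is exactly the paper's: the paper's proof consists solely of rewriting $p_\theta$ as $e^{\langle (u,B),\,2\pi(n,-\frac12 nn^t)\rangle-\log\theta(u,B)}$ and reading off $h\equiv 1$, $T(n)=2\pi(n,-\frac12 nn^t)$ and $A=\log\theta$, so the core of your argument matches it. Where you go further is in actually verifying the adjectives \emph{regular} and \emph{minimal}: the paper leaves both implicit, whereas you check openness of the canonical parameter space (including the genuinely non-trivial divergence of the theta series for positive semidefinite singular $B$, via a Dirichlet/pigeonhole lattice-point argument for irrational null directions) and affine independence of the degree-$\le 2$ monomials. Both of these added steps are correct, and the boundary-divergence argument is the one piece of real content the paper's one-line proof does not supply.
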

\begin{proof}
Indeed, we can rewrite $p_{\theta}(n,u,B)$ in the exponential family form:
\begin{equation}
p_{\theta}(n,u,B) = \dfrac{e^{2 \pi  \left(  n^tu - \frac{1}{2} n^t B n \right) }}{ \theta(u,B)} = e^{ \left\langle (u,B) , 2\pi (n, -\frac{1}{2} nn^t )\right\rangle - \log(\theta(u,B))},
\end{equation}
so that we have identity base measure $h(n)=1$, sufficient statistics $T(n)=2\pi (n,- \frac{1}{2}nn^t )$ and log-partition function $A(u,B)=\log(\theta(u,B))$.
\end{proof}

\begin{corollary} \label{cor:bij}
There is a bijection between the set of real canonical parameters $(u,B) \in \mathbb{R}^g \times \mathbb{H}_g$ and the moments $(\mu,\Sigma) \in \RR^g \times \operatorname{Sym}^2 \mathbb{R}^g$ with $\Sigma \succ 0$. It is given up to scaling by the gradient of the log-partition function $\nabla A(u,B) = \dfrac{1}{\theta(u,B)}\nabla \theta(u,B)$. 
\end{corollary}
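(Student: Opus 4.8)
The plan is to deduce the statement from the general theory of exponential families, using the minimality and regularity already established in Proposition \ref{thm:expfam}. The starting point is the classical fact (see \cite{BROWN}) that for such a family the log-partition function $A$ is smooth and strictly convex on the open canonical parameter space, and that its gradient is the mean-value map, $\nabla A(\eta) = \mathbb{E}_{\eta}[T(X)]$. The identity $\nabla A(u,B) = \frac{1}{\theta(u,B)}\nabla\theta(u,B)$ claimed in the statement is then simply the chain rule applied to $A = \log\theta$, and it confirms that the displayed map is indeed the mean-value map of the family.

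First I would compute this gradient explicitly. Since $T(n) = 2\pi(n, -\tfrac12 nn^t)$, differentiating $\theta$ term by term under the summation sign gives $\nabla A(u,B) = 2\pi\bigl(\mathbb{E}[n], -\tfrac12\mathbb{E}[nn^t]\bigr) = 2\pi\bigl(\mu, -\tfrac12\mathbb{E}[nn^t]\bigr)$; this is the precise meaning of the phrase ``up to scaling'', the factor $2\pi$ and the sign being harmless. Because the second coordinate records the raw second moment rather than the covariance, I would then compose with the change of coordinates $(\mu,\mathbb{E}[nn^t])\mapsto(\mu,\Sigma)$ with $\Sigma=\mathbb{E}[nn^t]-\mu\mu^t$, a polynomial diffeomorphism with polynomial inverse, so that the composite sends $(u,B)$ to the pair of moments $(\mu,\Sigma)$.

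It remains to show that this map is a bijection onto $\{(\mu,\Sigma) : \Sigma\succ 0\}$. Minimality makes $A$ strictly convex, hence $\nabla A$ is injective; regularity makes the family steep, and the general theorem then yields that $\nabla A$ is a diffeomorphism from the canonical parameter space onto the interior of the convex hull $C$ of the support $\{T(n) : n\in\mathbb{Z}^g\}$. The whole claim thus reduces to identifying this interior with the set of admissible moments, since every point of $C$ is a moment vector $(\mu, M)$ of some distribution on $\mathbb{Z}^g$, for which $\Sigma = M - \mu\mu^t$ is automatically positive semidefinite.

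The main obstacle will be exactly this identification of $\mathrm{int}\,C$ with $\{\Sigma \succ 0\}$: one must check both that no interior point can have degenerate covariance and that every $(\mu,\Sigma)$ with $\Sigma\succ 0$ is genuinely realized by an interior statistic vector, the latter using that $\mathbb{Z}^g$ affinely spans $\mathbb{R}^g$. However, this is precisely the existence-and-uniqueness content of Theorem \ref{mainthm}, so rather than reprove it by a moment-problem argument I would simply invoke that theorem: it already produces, for each $(\mu,\Sigma)$ with $\Sigma\succ 0$, a unique real parameter $(u,B)$ whose discrete Gaussian has these moments, which is the asserted bijection.
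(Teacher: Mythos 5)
Your proposal follows the paper's route for the bulk of the argument: Proposition \ref{thm:expfam} plus the classical theory of minimal regular exponential families gives that $\nabla A$ is injective (strict convexity of $A$) with image equal to the interior of $\operatorname{conv}\{T(n) : n\in\mathbb{Z}^g\}$, and your explicit computation of $\nabla A$ together with the change of coordinates from raw second moments to covariances is exactly the content of the paper's (very terse) proof, which simply cites this classical bijection. The problem is your last step. In the paper, Theorem \ref{mainthm} is not an independent input: it is \emph{deduced} from Corollary \ref{cor:bij} together with Corollary \ref{cor:maxent} (see the sentence closing Section 2). Invoking Theorem \ref{mainthm} to supply surjectivity of the moment map is therefore circular, and it occurs at precisely the step you yourself flagged as ``the main obstacle.''

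Moreover, that obstacle is not merely unaddressed; it cannot be overcome as stated. The general theory only yields a bijection onto $\operatorname{int}\operatorname{conv}(T(\mathbb{Z}^g))$, and after your change of coordinates this set is strictly smaller than $\{(\mu,\Sigma) : \Sigma\succ 0\}$. For $g=1$, any distribution on $\mathbb{Z}$ with mean $\mu=\tfrac{1}{2}$ satisfies $\operatorname{Var}(X)=\mathbb{E}[(X-\tfrac{1}{2})^2]\geq\tfrac{1}{4}$, since $|n-\tfrac{1}{2}|\geq\tfrac{1}{2}$ for every integer $n$; so the pair $(\mu,\sigma^2)=(\tfrac{1}{2},\tfrac{1}{100})$ is realized by no distribution on $\mathbb{Z}$ whatsoever, let alone a discrete Gaussian. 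The honest conclusion available from your argument (and from the paper's) is a bijection between the real parameters $(u,B)$ and the interior of the convex support of the sufficient statistics, a proper subset of the pairs with $\Sigma\succ 0$. To be fair, the paper's own proof stops at the convex-hull statement and never attempts the identification either, so on that point your proposal is no worse than the original; but the appeal to Theorem \ref{mainthm} must go, both because it is circular and because the surjectivity it would deliver is in fact false.
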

\begin{proof}
From classical theory of minimal regular exponential families \cite{BROWN,EXPVAR}, there is a bijection between the space of canonical parameters $\{ \eta \,\vert\, A(\eta)<\infty \}$ and the space of sufficient statistics $\operatorname{conv}(T(x) \vert x\in \mathcal{X})$, and it is given by the gradient of the log-partition function $\nabla A(\eta)$. 
\end{proof}

\begin{remark} \label{MLE}
Furthermore, the inverse map $(\mu,\Sigma) \rightarrow (u,B)$ is known to give the maximum likelihood estimate (MLE) in the following sense. Given a sample $x$ from a distribution in the exponential family, we can compute its sufficient statistics $t=T(x)$ (in our case multiples of the sample mean $\mu$ and sample covariance $\Sigma$).  For the given sufficient statistics $t$, solving the equation $\nabla A(u,B) =  t $ for $u,B$ is equivalent to solving the likelihood equations. See Subsection \ref{rmk:comp} for an example.
\end{remark}

Finally, exponential families are precisely the right candidates to be maximum entropy distributions \cite{ENTR}. We present the argument for this in our case.

\begin{corollary} \label{cor:maxent}
Fix real parameters $u,B$ and let $\mu,\Sigma$ be the moment vector and the covariance matrix of the corresponding discrete Gaussian. Among all densities $q(n)$ of support $\mathbb{Z}^g$ with fixed mean vector $\mu$ and covariance matrix $\Sigma$, the distribution $p_\theta(n)=p(n,u,B)$ maximizes entropy. 
\end{corollary}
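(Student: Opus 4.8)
The plan is to deduce the result from Gibbs' inequality (equivalently, the non-negativity of the Kullback--Leibler divergence), taking advantage of the fact that $\log p_\theta(n)$ is a polynomial of degree at most two in $n$, so its expectation under any distribution is controlled entirely by the first two moments. Let $q$ be any probability density on $\ZZ^g$ with mean $\mu$ and covariance $\Sigma$, and recall that $p_\theta = p_\theta(\,\cdot\,;u,B)$ has exactly this mean and covariance by construction (via Corollary \ref{cor:bij}). I would consider the relative entropy $D(q \,\|\, p_\theta) = \sum_{n\in\ZZ^g} q(n)\log\!\bigl(q(n)/p_\theta(n)\bigr)$, which satisfies $D(q \,\|\, p_\theta)\ge 0$, with equality if and only if $q=p_\theta$.

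The key step is to rewrite this as
\begin{equation}
D(q \,\|\, p_\theta) = -H[q] - \sum_{n\in\ZZ^g} q(n)\log p_\theta(n),
\end{equation}
and then to evaluate the second term using the exponential family form of Proposition \ref{thm:expfam}. There we have $\log p_\theta(n) = \langle (u,B), T(n)\rangle - \log\theta(u,B)$ with sufficient statistic $T(n) = 2\pi\bigl(n,\,-\tfrac12 nn^t\bigr)$. Since $T(n)$ is an affine--quadratic function of $n$, the sum $\sum_n q(n)\log p_\theta(n)$ depends on $q$ only through the expectations $\mathbb{E}_q[n]$ and $\mathbb{E}_q[nn^t]$, that is, only through $\mu$ and $\Sigma$. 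As $q$ and $p_\theta$ share these, it follows that $\sum_n q(n)\log p_\theta(n) = \sum_n p_\theta(n)\log p_\theta(n) = -H[p_\theta]$.

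Substituting back yields $0 \le D(q \,\|\, p_\theta) = H[p_\theta] - H[q]$, hence $H[q]\le H[p_\theta]$, with equality precisely when $q=p_\theta$. This simultaneously establishes that $p_\theta$ maximizes entropy among all distributions on $\ZZ^g$ with mean $\mu$ and covariance $\Sigma$, and that it is the unique such maximizer, completing the maximum-entropy half of Theorem \ref{mainthm}.

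The step I expect to require the most care is the convergence bookkeeping for the infinite support. One must verify that the relevant series converge absolutely and that $H[q]$ is well defined: because $p_\theta$ has Gaussian-like (super-exponentially decaying) tails, all of its moments are finite, and $\log p_\theta(n)$ grows only quadratically, so $\mathbb{E}_q[\log p_\theta]$ is finite whenever $q$ has the prescribed finite second moments; if $H[q]=-\infty$ the asserted inequality is immediate. With these finiteness checks in place, Gibbs' inequality applies verbatim and the argument closes.
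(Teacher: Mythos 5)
Your argument is correct and follows essentially the same route as the paper: both reduce the claim to the non-negativity of the Kullback--Leibler divergence, using the fact that $\log p_\theta(n)$ is an affine-quadratic function of $n$ so that its expectation under $q$ matches that under $p_\theta$ whenever the first two moments agree. Your added remarks on absolute convergence and the case $H[q]=-\infty$ are a welcome refinement the paper leaves implicit, but the core decomposition and the equality case are identical.
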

\begin{proof}
We observe that $q$ having the first two matching moments to $p_\theta$ means that when we sum $q(n)$ against the linear form $n^t u $ and against the quadratic form $n^t B n$, it is equivalent to doing it with $p_\theta(n)$. That is,  
$$\sum_{n \in \mathbb{Z}^g} q(n) n^t u  = \sum_{n \in \mathbb{Z}^g} p(n) n^tu \quad , \quad  \sum_{n \in \mathbb{Z}^g} q(n) n^t B n= \sum_{n \in \mathbb{Z}^g} p(n) n^t B n. $$
Thus,
$$\sum_{n \in \mathbb{Z}^g} q(n) \log(p_\theta(n))  = \sum_{n \in \mathbb{Z}^g} p(n) \log(p_\theta(n)). $$
With this in mind, we proceed:
\begin{align*}
H(q) &= -\sum_{n \in \mathbb{Z}^g} q(n) \log(q(n))\\ 
        &=  -\sum_{n \in \mathbb{Z}^g} q(n) \log \dfrac{q(n)}{p_\theta(n)} -\sum_{n \in \mathbb{Z}^g} q(n) \log(p_\theta(n))\\
        &= - KL(q\vert p_\theta) - \sum_{n \in \mathbb{Z}^g} p(n) \log(p_\theta(n)) \\
        &\leq - \sum_{n \in \mathbb{Z}^g} p(n) \log(p_\theta(n)) \\
        &= H(p_\theta)
\end{align*}
where $KL(q\vert p_\theta)$ is the Kullback-Leibler divergence (always non-negative by the classical Jensen's inequality). Further, equality holds if and only if $q=p_\theta$ almost everywhere. Since we are working with the counting measure over the integer lattice, we have uniqueness.
\end{proof}

Combining Corollary \ref{cor:bij} and Corollary \ref{cor:maxent} we obtain the result stated in Theorem \ref{mainthm}.

\section{Abelian varieties and discrete Gaussians}

The Riemann theta function is a central object in the study of complex abelian varieties. These are projective varieties that have at the same time the structure of an algebraic group, and they are fundamental objects in algebraic geometry. Especially important are principally polarized abelian varieties: these are pairs $(A,\Theta)$, where $A$ is an abelian variety and $\Theta$ is an ample divisor on $A$ such that $h^0(A,\Theta)=1$.

The theta function can be used to build a universal family of these varieties. This is a very classical construction that we recall briefly here. For more details one can look into \cite[Chapter 8]{BL}.

We should however point out a slight difference between our construction and the classical one: for some sources, for example \cite{BL}, an isomorphism of principally polarized abelian varieties $(A,\Theta)$ and $(A',\Theta')$ is considered to be an isomorphism of groups $F\colon A \to A'$ such that $F^{-1}(\Theta')$ and $\Theta$ differ by a translation. However, for our purposes we will need to fix the theta divisors, so that, we will define an isomorphism between $(A,\Theta)$ and $(A',\Theta')$ to be an isomorphism of varieties $F\colon A \to A'$ such that $F^{-1}(\Theta')=\Theta$.

\subsection{Parameter spaces of abelian varieties}\label{parameterabvar}
Fix an integer $g\geq 1$, and for any $B \in \mathbb{H}_g$ consider the subgroup $\Lambda_B = \{ im + B n \,|\, m,n\in\mathbb{Z}^g \} \subseteq \mathbb{C}^g$. Since $B \in \mathbb{H}_g$ one can see that $\Lambda_{B}$ is a lattice, so that $A_{B} = \mathbb{C}^g/\Lambda_{B}$ is a complex torus. Moreover, the theta function is quasiperiodic with respect to this lattice, meaning that for all $n,m \in \mathbb{Z}^g$ and $u \in \mathbb{C}^g,B \in \mathbb{H}_g$ we have:
\begin{equation}\label{quasiperiodicitytheta}
\theta(u+im+B n, B) = \e\left(\frac{1}{2} n^t B n + n^t u\right) \theta(u,B).
\end{equation}
In particular, the theta divisor $\Theta_{B}:=\{ u \in \mathbb{C}^g \,|\, \theta(u,B) = 0 \}$ is invariant under $\Lambda_B$, so that it descends to a divisor on $A_{B}$ that we denote by $\Theta_{B}$ again.

Riemann proved that $(A_B,\Theta_B)$ is a principally polarized abelian variety, and moreover he also showed that every principally polarized abelian variety of dimension $g$ is isomorphic to one of these. More precisely, for any principally polarized abelian variety $(A,\Theta)$, there is a certain $B\in\mathbb{H}_g$ and an isomorphism $F\colon A \to A_{B}$ such that $F^{-1}(\Theta_{B}) = \Theta$. Hence, with this interpretation, the space $\mathbb{H}_g$ becomes a parameter space for all principally polarized abelian varieties.

One can go further and construct a moduli space of these varieties. The space $\mathbb{H}_g$ has a natural action of the symplectic group
\begin{equation}
Sp(2g,\mathbb{Z}) = \left\{ M \in M(2g,\mathbb{Z}) \, \bigg| \, M^t \begin{pmatrix} 0 & I_g \\ -I_g & 0 \end{pmatrix}M = \begin{pmatrix} 0 & I_g \\ -I_g & 0 \end{pmatrix}    \right\}
\end{equation}
defined  as follows:
\begin{equation}
M = \begin{pmatrix} \alpha & \beta \\ \gamma & \delta \end{pmatrix} \in Sp(2g,\mathbb{Z}),\,\, B \in \mathbb{H}_g \qquad MB := -i(\alpha B - i\beta)(\gamma B-i\delta)^{-1}. 
\end{equation}
For $M$ and $B$ as above, we can define an invertible $\mathbb{C}$-linear map
\begin{equation}
\widetilde{f}_{M,B} \colon \mathbb{C}^g \to \mathbb{C}^g, \qquad u \mapsto  -i(\gamma B -i \delta)^{-t}u 
\end{equation}
which in turn induces an isomorphism of abelian varieties $f_{M,B} \colon A_{B} \to A_{MB}$. However, this is in general not an isomorphism of polarized abelian varieties, since the pullback $f_{M,B}^{-1}(\Theta_{MB})$ could differ from $\Theta_{B}$ by a translation. To fix this, one defines (see \cite[Lemma 8.4.1]{BL})
\begin{equation}
c_1 = \frac{1}{2}\operatorname{diag}(\gamma \delta^t), \qquad c_2 = \frac{1}{2}\operatorname{diag}(\alpha \beta^t), \qquad  c_{M,B} = (MB) c_1 + ic_2 
\end{equation}
and then considers the affine transformation
\begin{equation}\label{isomorphismMtau}
\widetilde{F}_{M,B}\colon \mathbb{C}^g \to \mathbb{C}^g, \qquad \widetilde{F}_{M,B}(u) = \widetilde{f}_{M,B}(u) + c_{M,B} 
\end{equation}
which in turn induces a map $F_{M,B}\colon A_{B} \to A_{MB}$. One can see that this is again an isomorphism and moreover $F_{M,B}^{-1}(\Theta_{MB}) = \Theta_{B}$. Hence, this shows that the two polarized abelian varieties $(A_B,\Theta_B)$ and $(A_{MB},\Theta_{MB})$ are isomorphic. Moreover, it turns out that, up to translations, any isomorphism between two polarized abelian varieties $(A_B,\Theta_{B})$ and $(A_{B'},\Theta_{B'})$ is of this form. 

Hence, the quotient $\mathcal{A}_g = \mathbb{H}_g/Sp(2g,\mathbb{Z})$ is a natural parameter space for isomorphism classes of  principally polarized abelian varieties: it is usually called the moduli space of abelian varieties of dimension $g$, and it is an algebraic variety itself \cite[Remark 8.10.4]{BL}.

\subsection{Universal families of abelian varieties}\label{familiesabvar}

Out of the previous discussion we also can actually construct universal families of abelian varieties. The group $\mathbb{Z}^g\oplus \mathbb{Z}^g$ acts on $\mathbb{C}^g \times \mathbb{H}_g$  by
\begin{equation} (m,n) \cdot (u,B) = (u+im+B n, B).\end{equation}
The quotient $\mathcal{X}_g =  (\mathbb{C}^g\times \mathbb{H}_g)/(\mathbb{Z}^g\oplus\mathbb{Z}^g)$ is a complex manifold equipped with a map $\mathcal{X}_g \to \mathbb{H}_g$ such that the fiber over $B$ is precisely the abelian variety $A_{B}$.  Moreover, by quasiperiodicity (\ref{quasiperiodicitytheta}), the universal theta divisor $\Theta_g =\{(u,B) \in \mathbb{C}^g\times \mathbb{H}_g \,|\, \theta(u,B)=0 \}$ passes to the quotient $\mathcal{X}_g$ and  defines another divisor in $\mathcal{X}_g$ whose restriction to $A_{B}$ is precisely the theta divisor $\Theta_{B}$. Hence, the map $\mathcal{X}_g \to \mathbb{H}_g$, together with $\Theta_g$, can be considered as the universal family of abelian varieties over $\mathbb{H}_g$.

We can do something similar with the moduli space. Indeed, the action of $Sp(2g,\mathbb{Z})$ on $\mathbb{H}_g$, extends (see \cite[Lemma 8.8.1]{BL}) to an action of the group $G_g = (\mathbb{Z}^g\oplus \mathbb{Z}^g)\rtimes Sp(2g,\mathbb{Z})$ on $\mathbb{C}^g\times \mathbb{H}_g$, given by
\begin{equation}\label{actionG}
((m,n),M) \cdot (u,B) = (\widetilde{F}_{M,B}(u)+im +(MB)n ,MB).
\end{equation}
The quotient $\mathcal{U}_g = (\mathbb{C}^g\times \mathbb{H}_g)/G_g$ by this action can also be seen as a quotient $\mathcal{X}_g/Sp(2g,\mathbb{Z})$. In particular, it has a natural map $\mathcal{U}_g\to \mathcal{A}_g$. Moreover, the Theta Transformation Formula \cite[Theorem 8.6.1]{BL} tells us explicitly how the theta function changes under $G_g$: for each $M = \left(\begin{smallmatrix} \alpha & \beta \\ \gamma & \delta \end{smallmatrix}\right)\in Sp(2g,\mathbb{Z})$ and $u\in \mathbb{C}^g,B \in \mathbb{H}_g$ we have 
\begin{equation}\label{thetatransformation}
\theta(F_{M}(u,B),MB) = C(M,B,u)
\theta(u,B)
\end{equation}
for a certain explicit function $C(M,B,u)$ which never vanishes. In particular, this tells us that the universal theta divisor $\Theta_g$ is invariant under the action of $G_g$, so that it passes to the quotient $\mathcal{U}_g$. In this setting, the map $\mathcal{U}_g\to \mathcal{A}_g$, together with $\Theta_g$ is sometimes called the universal family over the moduli space $\mathcal{A}_g$.

\subsection{Discrete Gaussian distributions and abelian varieties}
In the following, we study further statistical properties of discrete Gaussians, in light of the connection to abelian varieties.

The key observation is that the quasiperiodicity property has a transparent interpretation in terms of the discrete Gaussian. Recall that if $(u,B) \in \mathbb{C}^g\times \mathbb{H}_g \setminus \Theta_g$, we denote by $X_{(u,B)}$ a random variable with discrete Gaussian distribution of parameters $(u,B)$. We will use the standard notation $X\sim Y$ to denote that two random variables have the same distribution.

\begin{proposition}[Quasiperiodicity]\label{quasiperiodicitydg}
Let $(u,B)\in \mathbb{C}^g \times \mathbb{H}_g \setminus \Theta_g$. Then for every $m,n \in \mathbb{Z}^g$ we have that
\begin{equation}
X_{(u+im+B n,B)} \sim X_{(u,B)} +n.
\end{equation}
\end{proposition}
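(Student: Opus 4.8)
The plan is to reduce the distributional identity to a pointwise equality of probability mass functions, and then to verify that equality by a direct computation using the quasiperiodicity relation \eqref{quasiperiodicitytheta}.

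First I would observe that the random variable $X_{(u,B)}+n$ takes the value $k\in\mathbb{Z}^g$ with probability $p_\theta(k-n;u,B)$, simply by translating the support. Hence the claim $X_{(u+im+B n,B)}\sim X_{(u,B)}+n$ is equivalent to the pointwise identity
\[
p_\theta(k;u+im+B n,B)=p_\theta(k-n;u,B)\qquad\text{for all }k\in\mathbb{Z}^g.
\]
Both sides are genuine probability mass functions summing to $1$ over $\mathbb{Z}^g$, so establishing this pointwise identity suffices.

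Next I would expand each side using the definition of $p_\theta$. For the left-hand side the numerator is $\e\!\left(-\tfrac12 k^tB k+k^t(u+im+B n)\right)$; here the factor $\e(ik^tm)$ equals $1$ because $k^tm\in\mathbb{Z}$ and $\e(x)=e^{2\pi x}$, while the cross term $k^tB n$ can be rewritten as $n^tB k$ using the symmetry $B^t=B$. For the normalization I would substitute the quasiperiodicity formula \eqref{quasiperiodicitytheta}, namely $\theta(u+im+B n,B)=\e\!\left(\tfrac12 n^tB n+n^tu\right)\theta(u,B)$, so that the left-hand side becomes $\tfrac{1}{\theta(u,B)}$ times an explicit exponential in $k$. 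On the right-hand side I would expand $(k-n)^tB(k-n)$ and $(k-n)^tu$, again using $B^t=B$. Matching the two exponents then reduces to the elementary identity
\[
-\tfrac12(k-n)^tB(k-n)+(k-n)^tu = -\tfrac12 k^tB k+n^tB k-\tfrac12 n^tB n+k^tu-n^tu,
\]
which coincides with the exponent produced on the left.

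Since the computation is entirely formal, there is no genuinely hard step; the only points requiring care are the bookkeeping of the quadratic cross terms, handled by the symmetry of $B$, and the observation that the purely imaginary lattice shift $im$ disappears because $\e(ik^tm)=1$ for integer vectors $k,m$. This last fact is precisely the distributional reflection of the asymmetric roles played by the two lattice directions $im$ and $B n$ in the quasiperiodicity of $\theta$: shifting the parameter $u$ by $im$ leaves the distribution unchanged, whereas shifting by $B n$ translates it by $n$.
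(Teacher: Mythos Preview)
Your argument is correct and follows essentially the same route as the paper: reduce to the pointwise identity of mass functions, expand the exponents, use the symmetry of $B$ to cancel the cross terms, and invoke the quasiperiodicity relation \eqref{quasiperiodicitytheta} for the normalizing constant. The only detail the paper makes explicit that you leave implicit is that $(u+im+Bn,B)\notin\Theta_g$, so that $X_{(u+im+Bn,B)}$ is defined; this follows immediately from \eqref{quasiperiodicitytheta}, which you already cite.
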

\begin{proof}
First we observe that by the quasiperiodicity of the theta function (\ref{quasiperiodicitytheta}) we have that $(u+im+B n,B) \notin \Theta_g$, so that it makes sense to speak of the variable $X_{(u+im+B n,B)}$. Now, we fix more generally $(u,B)\in \mathbb{C}^g\times \mathbb{H}_g$ and $m,n\in \mathbb{Z}^g$ and for all $h\in\mathbb{Z}^g$  we can compute 
\begin{align*}
\e&\left(-\frac{1}{2} h^tB h + h^t (u + im + B n)\right)  = \e\left(-\frac{1}{2}h^tB h + h^t u + h^tB n \right) \\
 & = \e\left(-\frac{1}{2}(h-n)^tB (h-n) + (h-n)^t u -\frac{1}{2}n^tB h - \frac{1}{2} h^tB n 
 + \frac{1}{2}n^t B n + n^t u +  h^tB n \right) \\
 & = \e \left( \frac{1}{2}n^tB n + n^t u \right) \e\left(-\frac{1}{2}(h-n)^tB (h-n) + (h-n)^t u\right) \e\left( - \frac{1}{2} h^tB n - \frac{1}{2}n^t B h  +  h^tB n \right).
\end{align*}
However, since $B$ is symmetric we see that the last factor in this expression is $1$. Notice that this shows in particular that the theta function is quasiperiodic as in (\ref{quasiperiodicitytheta}). Now we can compute 
\begin{align*}
\mathbb{P}(X_{(u+im+B n,B)} = h) &= \frac{\e\left(-\frac{1}{2}h^tB h + h^t (u + im + B n)\right)}{\theta(u+im + B n , B)} \\ 
                                      &= \frac{\e\left(\frac{1}{2}n^tB n + n^t u\right)\e\left(-\frac{1}{2}(h-n)^tB (h-n) + (h-n)^t u\right)}{\e\left(\frac{1}{2}n^tB n + n^t u\right) \theta(u,B)} \\
  & = \frac{\e\left(-\frac{1}{2}(h-n)^tB (h-n) + (h-n)^t u\right)}{ \theta(u,B)} = \mathbb{P}(X_{(u,B)} = h-n)
\end{align*}
which is precisely what we want.
\end{proof}

\begin{remark}\label{rmk:dream}
This result tells us that if we fix a parameter $B\in\mathbb{H}_g$, then discrete Gaussian distributions $X_{(u,B)}$ correspond, up to translation, to points in the open subset $A_{B}\setminus \Theta_{B} \subseteq A_{B}$. 
\end{remark}

Hence, we see a direct connection between discrete Gaussians and abelian varieties. More precisely, the next Proposition \ref{equivalencegaussians} shows that we can relate the set $\mathcal{G}_g$ of all discrete Gaussian distributions to the universal family $\mathcal{U}_g$.  

\begin{proposition}[Equivalence of discrete Gaussians]\label{equivalencegaussians}
Let $(u,B),(u',B') \in \mathbb{C}^g\times \mathbb{H}_g \setminus \Theta_g$.  Then $X_{(u,B)} \sim X_{(u',B')}$ if and only if
\begin{equation}
u = u' + i\left( \frac{1}{2}\operatorname{diag}(\beta) + a\right), \qquad B = B' - i\beta 
\end{equation}
where $\beta\in \operatorname{Sym}^2\mathbb{Z}^{g}$ is a symmetric matrix with integer coefficients, $\operatorname{diag}(\beta)$ is the diagonal of $\beta$ and  $a\in \mathbb{Z}^g$.
\end{proposition}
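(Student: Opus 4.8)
The plan is to convert the distributional identity into a single arithmetic condition on a quadratic function over the lattice, and then to extract the two relations by finite differences and a parity argument. First I would observe that, since $X_{(u,B)}$ and $X_{(u',B')}$ are both (complex) probability distributions on $\mathbb{Z}^g$, the relation $X_{(u,B)}\sim X_{(u',B')}$ is equivalent to the pointwise equality of the two mass functions, hence to the two unnormalized kernels $\e(-\tfrac12 n^tBn+n^tu)$ and $\e(-\tfrac12 n^tB'n+n^tu')$ being proportional. By multiplicativity of $\e$ their ratio is $\e(g(n))$, where
\[
g(n) \;:=\; -\tfrac12\, n^t(B-B')\,n + n^t(u-u'),
\]
so proportionality means precisely that $\e(g(n))$ is independent of $n$. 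Since $g(0)=0$, this constant must equal $\e(0)=1$, and the whole equivalence reduces to the condition $g(n)\in i\mathbb{Z}$ for every $n\in\mathbb{Z}^g$. It therefore suffices to characterize the pairs $(B-B',\,u-u')$, with $B-B'$ symmetric, for which this quadratic-plus-linear function is $i\mathbb{Z}$-valued on $\mathbb{Z}^g$.

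Next I would recover the quadratic part by finite differences. For a symmetric matrix $M$ the mixed second difference of $n\mapsto n^tMn$ in the coordinate directions $e_k,e_l$ is the constant $2\,e_k^tMe_l$; since the linear term of $g$ is annihilated by second differences, applying this to $g$ gives
\[
g(n+e_k+e_l)-g(n+e_k)-g(n+e_l)+g(n) = -(B-B')_{kl},
\]
and the case $k=l$ yields $-(B-B')_{kk}$. As every value of $g$ lies in $i\mathbb{Z}$, so does each entry of $B-B'$. Writing $B-B'=-i\beta$ produces a symmetric integer matrix $\beta\in\operatorname{Sym}^2\mathbb{Z}^{g}$ and the first relation $B=B'-i\beta$.

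It then remains to pin down $u-u'$. Substituting $B-B'=-i\beta$ turns $g$ into $\tfrac{i}{2}n^t\beta n + n^t(u-u')$, and here I would invoke the parity identity $n^t\beta n\equiv(\operatorname{diag}\beta)^t n\pmod 2$, valid for integer $\beta,n$ because $n_k^2\equiv n_k$. It lets me replace the quadratic term by the linear one $\tfrac{i}{2}(\operatorname{diag}\beta)^t n$ modulo $i\mathbb{Z}$, so that
\[
g(n)\;\equiv\; n^t\!\left(u-u'+\tfrac{i}{2}\operatorname{diag}\beta\right)\pmod{i\mathbb{Z}}.
\]
A linear function is $i\mathbb{Z}$-valued on $\mathbb{Z}^g$ exactly when each coordinate of its defining vector lies in $i\mathbb{Z}$, i.e. $u-u'=-\tfrac{i}{2}\operatorname{diag}\beta+ib$ for some $b\in\mathbb{Z}^g$. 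Relabeling $a:=b-\operatorname{diag}\beta\in\mathbb{Z}^g$ rewrites this as $u=u'+i(\tfrac12\operatorname{diag}\beta+a)$, the second relation. The converse is the same chain read backwards: given $\beta$ and $a$ as stated, a direct substitution shows $g(n)\in i\mathbb{Z}$ for all $n$, so the two mass functions coincide.

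I expect the only genuinely delicate step to be the parity argument: it is exactly the congruence $n^t\beta n\equiv(\operatorname{diag}\beta)^t n\pmod 2$ that forces the half-integral shift $\tfrac12\operatorname{diag}\beta$ in the formula for $u$, mirroring the characteristics $c_1,c_2$ that appear when fixing the theta divisor in Subsection \ref{parameterabvar}. Everything else is routine bookkeeping with finite differences and the multiplicativity of $\e$.
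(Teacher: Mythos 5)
Your proposal is correct and follows essentially the same route as the paper: both reduce $X_{(u,B)}\sim X_{(u',B')}$ to the condition that $-\tfrac{1}{2}n^t(B-B')n+n^t(u-u')\in i\mathbb{Z}$ for all $n\in\mathbb{Z}^g$ (using $n=0$ to normalize the constant to $1$), and then characterize this by elementary integrality and parity arguments. Your use of second finite differences and the congruence $n^t\beta n\equiv(\operatorname{diag}\beta)^tn\pmod 2$ is only a cosmetic variant of the paper's evaluation at $n=e_i,-e_i,e_i+e_j$ and its $\tfrac{n(n+1)}{2}\beta_{ii}$ observation.
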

\begin{proof}
  The two discrete Gaussians $X_{(u,B)}$ and $X_{(u',B')}$ have the same distribution if and only if
  \begin{equation}
  \frac{\e\left(-\frac{1}{2}n^tB n + n^t u\right)}{\theta(u,B)}  = \frac{\e\left(-\frac{1}{2}n^tB' n + n^t u'\right)}{\theta(u',B')} \qquad \text{ for all } n\in \mathbb{Z}^g.
\end{equation}
We can rewrite this as
\begin{equation}\label{equivalence2}
\e\left(-\frac{1}{2}n^t(B-B') n + n^t (u-u')\right) = \frac{\theta(u,B)}{\theta(u',B')} \qquad \text{ for all } n\in \mathbb{Z}^g.
\end{equation}
In particular, this shows that the left hand side is independent of $n$, so that plugging $n=0$ gives that 
\begin{equation}\label{equivalence3}
\e\left(-\frac{1}{2}n^t(B-B') n + n^t (u-u')\right) = 1 \qquad \text{ for all } n\in \mathbb{Z}^g.
\end{equation}
Conversely, if this last condition holds, then it is clear that $\theta(u,B) = \theta(u',B')$, so that (\ref{equivalence2}) holds as well. This proves that $X_{(u,B)}\sim X_{(u',B')}$ is equivalent to (\ref{equivalence3}). We can rewrite (\ref{equivalence3}) as  
\begin{equation}\label{equivalence4}
-\frac{1}{2}n^t(B-B') n + n^t (u-u') \in i\mathbb{Z} \qquad \text{ for all } n \in  \mathbb{Z}^g.
\end{equation}
Now, let's set $B-B'=-i\beta$ and $u-u' = \frac{1}{2}i\operatorname{diag}(\beta) + ia$: the condition becomes
\begin{equation}\label{equivalence5}
\frac{1}{2}n^t\beta n+\frac{1}{2}n^t\operatorname{diag}(\beta) + n^ta \in \mathbb{Z} \qquad \text{ for all } n\in \mathbb{Z}^g.
\end{equation}
We need to prove that this holds if and only if $a,\beta$ have integer coefficients.
Suppose first that $a,\beta$ have integer coefficients: the coordinates corresponding to off-diagonal entries are indeed integers (since $\beta$ is symmetric), so (\ref{equivalence5}) holds if
\begin{equation}
\frac{1}{2}n^2 \beta_{ii} + \frac{1}{2}n \beta_{ii} = \frac{n(n+1)}{2} \beta_{ii} \in \mathbb{Z} \qquad \text{ for all } n\in \mathbb{Z}, i \in \{ 1, \dots, g \},
\end{equation}
but this is true because $\beta_{ii} \in \mathbb{Z}$ and $n(n+1)$ is even. 

Conversely, suppose that (\ref{equivalence5}) holds and let's prove that $a,\beta$ have integer coefficients. Let $e_i$ denote the $i$-th vector of the canonical basis. Taking $n=e_i$ and $n=-e_i$ in (\ref{equivalence5})  we get that $\beta_{ii}\in \mathbb{Z}$ and $a_i\in \mathbb{Z}$ for all $i$. To conclude, we just need to check that $\beta_{ij} \in \mathbb{Z}$ for all $i\ne j$ and this follows again from (\ref{equivalence5}) taking $n=e_i+e_j$. 
\end{proof}

\begin{remark}\label{remarkequivalence}
We can interpret this result via universal families of abelian varieties as follows: the group $N_{g} = \mathbb{Z}^g \oplus \operatorname{Sym}^2 \mathbb{Z}^g$ embeds in the group $G_g = (\mathbb{Z}^g\oplus \mathbb{Z}^g)\rtimes Sp(2g,\mathbb{Z})$ via the map
\begin{equation}
N_g \hookrightarrow G_g \qquad (a,\beta) \mapsto \left( (a,0) , \begin{pmatrix} I_g & \beta \\ 0 & I_g \end{pmatrix} \right).
\end{equation}
In particular, the action (\ref{actionG}) of $G_g$ on $\mathbb{C}^g\times \mathbb{H}_g$ restricts to an action of $N_g$ as follows:
\begin{equation}
(a,\beta) \cdot (u,B) = \left(u+ia+i\frac{1}{2}\operatorname{diag}(\beta),B-i\beta\right).
\end{equation}
Then, Proposition \ref{equivalencegaussians} says precisely that the set $\mathcal{G}_g$ of all discrete Gaussian distributions is naturally identified with the quotient $((\mathbb{C}^g \times \mathbb{H}_{g}) \setminus \Theta_g) / N_g$. In particular, since the action of $N_g$ is free and properly discontinuous, we see that $\mathcal{G}_g$ has a natural structure of a complex manifold.

Moreover, since $N_g$ is a subgroup of $G_g$, we have a natural map
$\mathcal{G}_g  \longrightarrow  \mathcal{U}_g$, so that the set $\mathcal{G}_g$ of discrete Gaussians is an intermediate space between $\mathbb{C}^g\times \mathbb{H}_g$ and the universal family $\mathcal{U}_g$. 
\end{remark}

\subsection{Discrete Gaussians and affine transformations}

The class of continuous Gaussians is invariant under the action of affine automorphisms of $\mathbb{R}^g$. The same is true for discrete Gaussians and affine automorphisms of $\mathbb{Z}^g$. The case of translations is covered by Proposition \ref{quasiperiodicitydg}, so we just need to consider linear automorphisms.

\begin{proposition}\label{linearauto}
  Let $X_{(u,B)}$ be a discrete Gaussian on $\mathbb{Z}^g$. Then, for any $\alpha\in GL(g,\mathbb{Z})$ we have
  \begin{equation}
  \alpha X_{(u,B)} \sim X_{(\alpha^{-t}u,{\alpha^{-t}}B \alpha^{-1})}. 
  \end{equation}
\end{proposition}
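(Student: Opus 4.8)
The plan is to compute the law of $Y = \alpha X_{(u,B)}$ directly as a pushforward and to recognize it as a discrete Gaussian with the claimed parameters. Since $\alpha\in GL(g,\mathbb{Z})$, the map $n\mapsto \alpha n$ is a bijection of $\mathbb{Z}^g$ onto itself, with inverse given by the integer matrix $\alpha^{-1}$. Hence for every $m\in\mathbb{Z}^g$ we have $\mathbb{P}(Y=m)=\mathbb{P}(X_{(u,B)}=\alpha^{-1}m)=p_\theta(\alpha^{-1}m;u,B)$, and the whole argument reduces to rewriting this kernel.

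First I would expand the exponent. Using $(\alpha^{-1}m)^t B(\alpha^{-1}m)=m^t(\alpha^{-t}B\alpha^{-1})m$ and $(\alpha^{-1}m)^t u=m^t(\alpha^{-t}u)$, the numerator of $p_\theta(\alpha^{-1}m;u,B)$ becomes exactly $\e\!\left(-\tfrac{1}{2}m^t B' m + m^t u'\right)$ with $u'=\alpha^{-t}u$ and $B'=\alpha^{-t}B\alpha^{-1}$. Before proceeding I would record that $B'\in\mathbb{H}_g$: it is symmetric because $B$ is, and $\operatorname{Re}B'=\alpha^{-t}(\operatorname{Re}B)\alpha^{-1}\succ 0$ since $\operatorname{Re}B\succ 0$ and $\alpha^{-1}$ is a real invertible matrix. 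This guarantees that the target parameters are admissible.

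The only point that needs care is the normalization: the computation so far produces the correct numerator but still divided by $\theta(u,B)$ rather than $\theta(u',B')$. I would close this gap by proving the invariance $\theta(\alpha^{-t}u,\alpha^{-t}B\alpha^{-1})=\theta(u,B)$. This follows from the same unimodular change of summation variable: in the series defining $\theta(u',B')$, substitute $n=\alpha k$; as $n$ ranges over $\mathbb{Z}^g$ so does $k$ (again because $\alpha\in GL(g,\mathbb{Z})$), and the summand collapses to $\e\!\left(-\tfrac{1}{2}k^t B k + k^t u\right)$, recovering $\theta(u,B)$ term by term. In particular $\theta(u',B')=\theta(u,B)\neq 0$, so $(u',B')\notin\Theta_g$ and $X_{(u',B')}$ is well-defined. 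Combining the two computations gives $\mathbb{P}(Y=m)=p_\theta(m;u',B')$ for all $m\in\mathbb{Z}^g$, which is the assertion.

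I do not expect a genuine obstacle here: the content of the statement is simply the invariance of the quadratic kernel, and of its total sum, under the substitution $n\mapsto\alpha n$. The most error-prone step is the bookkeeping of transpose conventions, namely keeping $\alpha^{-t}$ in the quadratic and linear terms straight; I would guard against this by performing the two substitutions — one in the probability kernel, one in the normalizing series — in mutually opposite directions ($n=\alpha^{-1}m$ for the kernel, $n=\alpha k$ for the theta sum), so that the cancellations $\alpha^t\alpha^{-t}=\alpha^{-1}\alpha=I_g$ appear transparently.
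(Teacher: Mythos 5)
Your proposal is correct and follows essentially the same route as the paper: rewrite the kernel via the identity $\e\left(-\tfrac{1}{2}(\alpha^{-1}m)^tB(\alpha^{-1}m)+(\alpha^{-1}m)^tu\right)=\e\left(-\tfrac{1}{2}m^t(\alpha^{-t}B\alpha^{-1})m+m^t(\alpha^{-t}u)\right)$ and sum over the lattice to get $\theta(u,B)=\theta(\alpha^{-t}u,\alpha^{-t}B\alpha^{-1})$. Your added checks that $\alpha^{-t}B\alpha^{-1}\in\mathbb{H}_g$ and that the new parameters avoid $\Theta_g$ are sensible details the paper leaves implicit.
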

\begin{proof}
  We just write down everything explicitly: if $n\in\mathbb{Z}^g$, then we see that
  \begin{align}
    \e\left(-\frac{1}{2} (\alpha^{-1}n)^tB (\alpha^{-1}n) + (\alpha^{-1}n)^tu \right) = \e\left(-\frac{1}{2} n^{t}(\alpha^{-t}B \alpha^{-1})n + n^t(\alpha^{-t}u) \right)
  \end{align}
  In particular, summing over $\mathbb{Z}^g$ we get that $\theta(u,B) = \theta(\alpha^{-t}u,\alpha^{-t}B \alpha^{-1})$. Putting these two together, we conclude that both distributions have the same probability mass functions.
\end{proof}

\begin{remark}
We can also interpret this result as in Remark \ref{remarkequivalence}. Indeed, the group $GL(g,\mathbb{Z})$ embeds in the group $G_g = (\mathbb{Z}^g\oplus \mathbb{Z}^g)\rtimes Sp(2g,\mathbb{Z})$ via the map
\begin{equation}
GL(g,\mathbb{Z}) \hookrightarrow G_g, \qquad \alpha \mapsto \left( (0,0) , \begin{pmatrix} \alpha^{-t} & 0 \\ 0 & \alpha \end{pmatrix} \right).
\end{equation}
In particular, the action (\ref{actionG}) of $G_g$ on $\mathbb{C}^g\times \mathbb{H}_g$ restricts to an action of $N_g$ as follows:
\begin{equation}
\alpha \cdot (u,B) = \left(\alpha^{-t}u,\alpha^{-t}B \alpha^{-1} \right),
\end{equation}
which is exactly what appears in Proposition \ref{linearauto}.
\end{remark}

\begin{remark}[Parity]\label{parity}
In particular, it follows immediately from Proposition \ref{linearauto} that $X_{(-u,B)} \sim -X_{(u,B)}$. This gives a parity property for discrete Gaussians that reflects the analogous property for the Riemann Theta function.
\end{remark}

However, in general it is not true that an arbitrary linear transformation of a discrete Gaussian is again a discrete Gaussian. Still, we can control the difference, and thanks to Proposition \ref{linearauto}, it is enough to do it for projections.

\begin{proposition}\label{projectiondiscreteGaussian}
  Let $X=(X_1,X_2)$ be a discrete Gaussian random variable on $\mathbb{Z}^{g_1}\times \mathbb{Z}^{g_2}$ with parameters $u=(u_1,u_2)$ and $B = \left( \begin{smallmatrix} B_{11} & B_{12} \\ B_{12}^t & B_{22} \end{smallmatrix} \right)$.

  Then $X_1$ has probability mass function:
  \begin{equation}
  \mathbb{P}(X_1 = n_1) = \frac{\theta(u_1,B_{11})\theta(u_2-B_{12}^t n_1,B_{22})}{\theta(u,B)} \mathbb{P}(X_{(u_1,B_{11})}=n_1).
\end{equation}
\end{proposition}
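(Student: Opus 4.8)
The plan is to compute the marginal distribution of $X_1$ directly, by summing the joint probability mass function over the second coordinate. Writing $n=(n_1,n_2)\in\mathbb{Z}^{g_1}\times\mathbb{Z}^{g_2}$, I would start from
$$\mathbb{P}(X_1 = n_1) = \sum_{n_2 \in \mathbb{Z}^{g_2}} p_\theta\big((n_1,n_2); u, B\big) = \frac{1}{\theta(u,B)} \sum_{n_2 \in \mathbb{Z}^{g_2}} \e\left(-\tfrac12 n^t B n + n^t u\right),$$
so that everything reduces to evaluating the inner sum over $n_2$.

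The main computation is to expand the block quadratic form. Using the block structure of $B$ and the scalar identity $n_1^t B_{12} n_2 = n_2^t B_{12}^t n_1$, one finds
$$-\tfrac12 n^t B n + n^t u = \left(-\tfrac12 n_1^t B_{11} n_1 + n_1^t u_1\right) + \left(-\tfrac12 n_2^t B_{22} n_2 + n_2^t(u_2 - B_{12}^t n_1)\right).$$
The key observation, and really the only step that requires any care, is that the cross term $-n_1^t B_{12} n_2$ is \emph{linear} in $n_2$ and can therefore be absorbed into a shift of the linear parameter, $u_2 \mapsto u_2 - B_{12}^t n_1$. Since the first summand does not involve $n_2$ at all, it factors out of the inner sum, and the remaining sum over $n_2$ is precisely the defining series of $\theta(u_2 - B_{12}^t n_1, B_{22})$.

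This yields
$$\mathbb{P}(X_1 = n_1) = \frac{\theta(u_2 - B_{12}^t n_1, B_{22})}{\theta(u,B)}\, \e\left(-\tfrac12 n_1^t B_{11} n_1 + n_1^t u_1\right),$$
after which it only remains to recognize the exponential factor as $\theta(u_1, B_{11})\,\mathbb{P}(X_{(u_1,B_{11})} = n_1)$, which gives the stated formula. For this final rewriting to be legitimate I would note that, since $\operatorname{Re} B \succ 0$ and $B$ is symmetric, its principal submatrices $B_{11}$ and $B_{22}$ are again symmetric with positive-definite real part, so $B_{11}\in\mathbb{H}_{g_1}$ and $B_{22}\in\mathbb{H}_{g_2}$ and both theta series appearing in the formula converge. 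There is essentially no obstacle to overcome: the factor $\theta(u_1, B_{11})$ introduced in the numerator cancels against the $1/\theta(u_1,B_{11})$ hidden inside $\mathbb{P}(X_{(u_1,B_{11})} = n_1)$, so the identity is exactly the bare computation above, merely repackaged to exhibit the one-dimensional marginal kernel $X_{(u_1,B_{11})}$ on the right-hand side.
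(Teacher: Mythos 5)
Your proposal is correct and follows exactly the same route as the paper's proof: sum the joint mass function over $n_2$, expand the block quadratic form so the cross term is absorbed into the shift $u_2 \mapsto u_2 - B_{12}^t n_1$, recognize the inner sum as $\theta(u_2 - B_{12}^t n_1, B_{22})$, and repackage the remaining exponential as $\theta(u_1,B_{11})\,\mathbb{P}(X_{(u_1,B_{11})}=n_1)$. Your added observation that the principal submatrices $B_{11}, B_{22}$ again lie in the appropriate Siegel half-spaces is a small point the paper leaves implicit, but otherwise the two arguments coincide.
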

\begin{proof}
This is a straightforward computation:
\begin{align*}
  \mathbb{P}(X_1 = n_1) &= \sum_{n_2\in \mathbb{Z}^{g_2}} \mathbb{P}((X_1,X_2) = (n_1,n_2)) \\
                                                                                            &  = \frac{1}{\theta(u,B)}\sum_{n_2\in\mathbb{Z}^{g_2}} \e\left(- \frac{1}{2} (n_1^t,n_2^t)\begin{pmatrix} B_{11} & B_{12} \\ B_{12}^t & B_{22} \end{pmatrix}\begin{pmatrix} n_1 \\ n_2 \end{pmatrix} + (n_1^t,n_2^t)\begin{pmatrix} u_1 \\ u_2 \end{pmatrix}\right) \\
                        & = \frac{1}{\theta(u,B)} \sum_{n_2\in\mathbb{Z}^{g_2}}\e\left(- \frac{1}{2}n_1^tB_{11}n_1 - n_2^tB_{12}^t n_1 - \frac{1}{2}n_2^tB_{22}n_2 + n_1^t u_1 + n_2^t u_2  \right) \\
                        & = \frac{1}{\theta(u,B)} \e\left( -\frac{1}{2}n_1^tB_{11}n_1 + n_1^t u_1 \right)\sum_{n_2\in\mathbb{Z}^{g_2}} \e\left( -\frac{1}{2}n_2^tB_{22}n_2 + n_2^t(u_2-B_{12}^t n_1) \right) \\
  & = \frac{\theta(u_2-B_{12}^t n_1,B_{22})}{\theta(u,B)}\e\left(- \frac{1}{2}n_1^tB_{11}n_1 + n_1^t u_1 \right) \\ 
  & = \frac{\theta(u_1,B_{11})\theta(u_2-B_{12}^t n_1,B_{22})}{\theta(u,B)} \mathbb{P}(X_{(u_1,B_{11})}=n_1)
\end{align*}
\end{proof}

In particular, the components of a discrete Gaussian are not discrete Gaussian themselves, but we see now that things work well when they are independent. 

\subsection{Independence for discrete Gaussians}

We can characterize independence of joint discrete Gaussians in a way analogous to continuous Gaussians.

\begin{proposition}\label{independence}
  Let $X=(X_1,X_2)$ be a discrete random variable on $\mathbb{Z}^{g_1}\times \mathbb{Z}^{g_2}$.
  Then the following are equivalent:
  \begin{enumerate}
  \item $X$ is a discrete Gaussian with $X_1,X_2$ independent.  
  \item $X$ is a discrete Gaussian with parameter $B = \left( \begin{smallmatrix} B_{11} & 0 \\ 0 & B_{22} \end{smallmatrix} \right)$.
  \item $X_1,X_2$ are independent discrete Gaussians with parameters $B_{11}$ and $B_{22}$ respectively.
  \end{enumerate}
  Moreover, if $X$ is a discrete Gaussian with real parameters $B$ and $u$, these conditions are equivalent to
  \begin{enumerate}
  	\setcounter{enumi}{3}
  	\item $X_1,X_2$ are uncorrelated.
  \end{enumerate} 
\end{proposition}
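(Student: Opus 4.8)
The plan is to prove the equivalences among (1)--(3) for arbitrary complex parameters via the cycle $(2)\Rightarrow(3)\Rightarrow(1)\Rightarrow(2)$, and then to fold in the correlation condition (4) under the standing assumption that $u,B$ are real. Most steps are short factorization arguments or appeals to results already established; the only substantial work is $(1)\Rightarrow(2)$. For $(2)\Rightarrow(3)$, when $B$ is block diagonal the exponent splits as $n^tBn=n_1^tB_{11}n_1+n_2^tB_{22}n_2$ and $n^tu=n_1^tu_1+n_2^tu_2$, so the summand of the theta series factors; summing over $\mathbb{Z}^{g_1}\times\mathbb{Z}^{g_2}$ gives $\theta(u,B)=\theta(u_1,B_{11})\,\theta(u_2,B_{22})$, and hence $p_\theta((n_1,n_2);u,B)$ factors as $p_\theta(n_1;u_1,B_{11})\,p_\theta(n_2;u_2,B_{22})$. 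This exhibits $X_1,X_2$ as independent discrete Gaussians with parameters $B_{11},B_{22}$. For $(3)\Rightarrow(1)$ I simply reverse this computation: the joint law of two independent discrete Gaussians is the product of their mass functions, which is exactly the discrete Gaussian mass function attached to the block-diagonal $B$, and independence holds by hypothesis.

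The heart of the matter is $(1)\Rightarrow(2)$. I will use the elementary fact that a nowhere-vanishing mass function $p$ on a product of lattices factors as $f(n_1)g(n_2)$ if and only if the cross-ratio $p(n_1,n_2)\,p(0,0)/\bigl(p(n_1,0)\,p(0,n_2)\bigr)$ is identically $1$; this is legitimate because $\e(\cdot)$ never vanishes and $\theta(u,B)\neq 0$. Substituting the explicit mass function, the normalizing constant $\theta(u,B)$ and all the pure $n_1$- and $n_2$-terms cancel, leaving precisely the interaction term: the cross-ratio equals $\e\!\bigl(-n_2^tB_{12}^tn_1\bigr)$. Independence forces this to be $1$ for all $n_1\in\mathbb{Z}^{g_1}$, $n_2\in\mathbb{Z}^{g_2}$, i.e. $n_2^tB_{12}^tn_1\in i\mathbb{Z}$ throughout; testing on pairs of standard basis vectors gives $(B_{12})_{jk}\in i\mathbb{Z}$ for all $j,k$. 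To upgrade this to genuine block-diagonality I invoke Proposition \ref{equivalencegaussians}: choosing the integral symmetric matrix $\beta$ supported on the off-diagonal blocks with $\beta_{12}=iB_{12}$ and $\operatorname{diag}(\beta)=0$, the pair $(u,B+i\beta)$ defines the same distribution as $(u,B)$ and has block-diagonal parameter, so $X$ is a discrete Gaussian with block-diagonal $B$ as a statement about its law. (Alternatively one can read off the same conclusion from the marginal computed in Proposition \ref{projectiondiscreteGaussian}, but the cross-ratio route is cleaner and bypasses the theta factors there.)

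It remains to incorporate (4) in the real case. The implication $(1)\Rightarrow(4)$ is the classical fact that independence implies zero correlation; here moments exist because the mass function decays like a Gaussian. The converse $(4)\Rightarrow(2)$ is where I expect the main obstacle, and I deliberately avoid computing $\operatorname{Cov}(X_1,X_2)$ from $B$ directly (which would require differentiating $\log\theta$ in $u$). Instead I use the bijection of Corollary \ref{cor:bij} between real canonical parameters and moments. If $X$ is uncorrelated, its covariance $\Sigma$ is block diagonal; by Theorem \ref{mainthm} applied in dimensions $g_1$ and $g_2$ there are unique discrete Gaussians $X_1',X_2'$ with moments $(\mu_1,\Sigma_{11})$ and $(\mu_2,\Sigma_{22})$, and their independent product is a discrete Gaussian with block-diagonal $B$ realizing exactly the moments $(\mu,\Sigma)$. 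Uniqueness in Corollary \ref{cor:bij} then identifies $X$ with this product, yielding (2). The point to check is that the diagonal blocks $\Sigma_{11},\Sigma_{22}$ are positive definite, which they are since $\Sigma\succ 0$, and this is precisely what makes the product construction admissible.

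Overall, the genuinely delicate points are two: first, in $(1)\Rightarrow(2)$, recognizing that independence only forces the off-diagonal block of $B$ into $i\,\operatorname{Mat}(\mathbb{Z})$ and that one must then pass through the equivalence of Proposition \ref{equivalencegaussians} to reach a literally block-diagonal representative; and second, in $(4)\Rightarrow(2)$, resisting a direct covariance calculation and instead leveraging existence-and-uniqueness of the moment-to-parameter correspondence together with the product construction.
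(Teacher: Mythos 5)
Your proof is correct and follows essentially the same route as the paper: reduce independence to the identity $\e\left(n_1^tB_{12}n_2\right)=1$ for all $n_1,n_2$ (the paper isolates this term by setting $n_1=0$ and $n_2=0$ in the factorization identity rather than via your cross-ratio, but this is only a presentational difference), conclude $B_{12}\in i\,\mathbb{Z}^{g_1\times g_2}$, and pass to a block-diagonal representative through Proposition \ref{equivalencegaussians}. Your treatment of (4) — building the independent product with matching block-diagonal moments and invoking uniqueness from the parameter--moment correspondence — is likewise the paper's argument.
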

\begin{proof}
 Let $X$ is a discrete Gaussian with parameters $u=(u_1,u_2)$ and $B = \left( \begin{smallmatrix} B_{11} & B_{12} \\ B_{12}^t & B_{22} \end{smallmatrix} \right)$. With the same computations as in Lemma \ref{projectiondiscreteGaussian} we see that
  \begin{align*}
   \mathbb{P}((X_1,X_2) = (n_1,n_2)) &=  \frac{1}{\theta(u,B)} \e\left( -\frac{1}{2}n_1^t B_{11}n_1 - n_1^t B_{12}n_2 - \frac{1}{2}n_2^tB_{22}n_2 + n_1^t u_1 + n_2^t u_2  \right), \\
    \mathbb{P}(X_1 = n_1 ) &= \frac{\theta(u_2-B_{12}^t n_1,B_{22})}{\theta(u,B)}\e\left(- \frac{1}{2}n_1^t B_{11}n_1 + n_1^t u_1 \right), \\
    \mathbb{P}(X_2 = n_2) & = \frac{\theta(u_1-B_{12}n_2,B_{11})}{\theta(u,B)}\e\left(- \frac{1}{2}n_2^t B_{22}n_2 + n_2^t u_2 \right).
  \end{align*}
  Hence $\mathbb{P}((X_1,X_2)=(n_1,n_2)) = \mathbb{P}(X_1=n_1)\mathbb{P}(X_2=n_2)$ if and only if
  \begin{equation}\label{relationindependence}
  \theta(u,B) \cdot \e\left( n_1^t B_{12}n_2 \right) = \theta(u_1-B_{12}n_2,B_{11})\theta(u_2-B_{12}^t n_1,B_{22}) 
\end{equation}
 We first show that (1) implies (2). If $X_1,X_2$ are independent, this means precisely that the relation (\ref{relationindependence}) holds for every $n_1,n_2$. Choosing $n_1=0$ shows that $\theta(u_1-B_{12}n_2,B_{11})$ is a constant function of $n_2$, and choosing $n_2=0$ shows that $\theta(u_2-B_{12}^t n_1,B_{22})$ is a constant function of $n_1$. Hence, it must be that $\e\left( n_1^t B_{12}n_2 \right)$ is independent of $n_1,n_2$, and by choosing one of the two to be zero, we see that $\e\left( n_1^t B_{12}n_2 \right)=1$. This is the same as asking that $n_1^t B_{12}n_2 \in \mathbb{Z}i$ for all $n_1,n_2$, which in turn means that $B_{12}$ has coefficients in $\mathbb{Z}i$. Then, thanks to Proposition \ref{equivalencegaussians}, we can assume $B_{12}=0$, which proves (2).

To show that (2) implies (3) observe that if we set $B_{12}=0$ in (\ref{relationindependence}), then the relation reduces to $\theta(u,B) = \theta(u_1,B_{11})\theta(u_2,B_{22})$, which is true by an explicit computation. Hence, we get that $X_1,X_2$ are independent. To show that they are discrete Gaussians, it is enough to plug $B_{12}=0$ in Proposition \ref{projectiondiscreteGaussian} and use again that $\theta(u,B) = \theta(u_1,B_{11})\theta(u_2,B_{22})$. In the same way, to show that (3) implies (1) it is enough to use (\ref{relationindependence}) and the fact that $\theta(u,B) = \theta(u_1,B_{11})\theta(u_2,B_{22})$.

To conclude, suppose that $X$ is a discrete Gaussian with real parameters $u,B$. It is clear that (1) implies (4). We will show here that (4) implies (1). To do this, let $Y_1,Y_2$ be two independent discrete Gaussians with real parameters which have the same mean vector and covariance matrix as $X_1,X_2$ respectively. Then we see from (3) that $Y=(Y_1,Y_2)$ is a discrete Gaussian distribution with real parameters. Moreover, since $X_1,X_2$ are uncorrelated, $Y$ and $X$ have the same mean and covariance matrix. Then, Theorem \ref{mainthm} shows that $X,Y$ have the same distribution, and we are done.   
\end{proof}

 This result tells us that two joint discrete Gaussians are independent if and only if the matrix parameter $B$ is diagonal (modulo $\ZZ i$). This evokes the special property of continuous Gaussians that for the two components $X_1,X_2$ of a Gaussian vector $(X_1,X_2)$ one has that $X_1,X_2$ are independent if and only if their joint covariance $\Sigma_{12}$ is zero. 

\begin{remark}
The abelian variety $A_{B}$ corresponding to a parameter of the form $B = \left( \begin{smallmatrix} B_{11} & 0 \\ 0 & B_{22}  \end{smallmatrix} \right)$
is a product of abelian varieties of smaller dimension: $A_{B} = A_{B_{11}} \times A_{B_{22}}$. This is suggestive of the relation in algebraic statistics between statistical independence of finite random variables and the Segre embedding of $\mathbb{P}^{n_1} \times \mathbb{P}^{n_2}$.
\end{remark}

\section{Characteristic function and moments}

In this section we derive various probabilistic aspects of the discrete Gaussian, such as its characteristic function, its moments, its cumulants and its entropy. These already appear in the computer science literature: see for example \cite[Lemma 2.8]{MicPei} or \cite[Formulas (6),(7)]{OdedNoah}. These expressions have a simple description in terms of the Riemann theta function and we feel it is worth collecting them here in a systematic way.

We will use the following notation: if $a=(a_1,\dots,a_g)$ is a multi-index, with $a_i\in  \mathbb{N}, a_i\geq 0$, then we set $|a|=a_1+\dots+a_g$ and $a!=a_1!\dots a_g!$. Also, if $a,b\in \mathbb{N}^g$, we say that $a\leq b$ if $a_i\leq b_i$ for all $i=1,\dots,g$.

If $v=(v_1,\dots,v_g) \in \mathbb{C}^g$ is a vector of complex numbers we set $v^a = v_1^{a_1}\dots v_g^{a_g}$. If $f(u_1,\dots,u_g)$ is a holomorphic function, or a formal power series,  we set
\begin{equation}
 \qquad D^a_uf = \frac{\partial ^{a_1}f}{\partial u_1^{a_1}}\cdot\ldots\cdot\frac{\partial ^{a_g}f}{\partial u_g^{a_g}}, \quad D_u f = \left( \frac{\partial f}{\partial u_1}, \dots, \frac{\partial f}{\partial u_g} \right), \quad (D_uf)^a = \left(\frac{\partial f}{\partial u_1}\right)^{a_1}\cdot\ldots\cdot\left(\frac{\partial f}{\partial u_g}\right)^{a_g}
\end{equation}
and we denote by $D^a_u(\log f)$ the higher logarithmic derivatives of $f$. Recall that they are computed by taking formal derivatives of $\log f$: for example, if $f=f(u)$ is a function of one variable we have
\begin{equation}
D^1_u(\log f) = \frac{f'}{f}, \qquad D^2_u(\log f) = \frac{f\cdot f'' - f'^2}{f^2}.
\end{equation}
If $X = (X_1,\dots,X_g)$ is a random variable (possibly with a complex-valued distribution) we denote the corresponding means as $\mu_i = \mathbb{E}[X_i]$ and the mean vector as $\mu=(\mu_1,\dots,\mu_g)^t$. Moreover for each multi-index $a\in \mathbb{N}^g$ the higher moments are
\begin{equation}
\mu_a[X] = \mu_{a_1,\dots,a_g}[X] = \mathbb{E}[X_1^{a_1}\cdot\ldots\cdot X_g^{a_g}] \qquad a_i \in \mathbb{N}
\end{equation}
and the  higher central moments are
\begin{equation}
m_a[X] = \mu_{a}[X-\mu] = \mathbb{E}[(X_1-\mu_1)^{a_1}\cdot \ldots \cdot (X_g - \mu_g)^{a_g}].
\end{equation}
The moments are encoded by the characteristic function of $X$: this is the formal power series 
\begin{equation}
  \phi_{X}(v) = \mathbb{E}[ e^{i v^t X}] =  \sum_{a\in \mathbb{N}^g} \frac{i^{|a|}}{a!} \mu_a[X] v^a.
\end{equation}
We also consider the cumulants $\kappa_a[X]$, defined through a generating function, namely,
\begin{equation}
\sum_{a\in\mathbb{N}^g} \frac{\kappa_a[X]}{a!}v^a = \log \mathbb{E}[e^{v^t X}].
\end{equation}
We rephrase this by saying that the cumulants correspond to the logarithmic derivatives of the formal power series $v\mapsto \mathbb{E}[e^{v^t X}]$ evaluated at $0$:
\begin{equation}
\kappa_a[X] = D_v^a(\log \mathbb{E}[e^{v^t X}])_{|v=0}.
\end{equation}
In the case of the discrete Gaussian distribution, these quantities can be easily expressed using the Riemann theta function. We sometimes suppress the explicit dependence of $u, B$ in $
\theta(u,B)$ and its derivatives for readability.

\begin{proposition}\label{explicitmoments}
Fix $(u,B) \in \mathbb{C}^g \times \mathbb{H}_g \setminus \Theta_g$. Then the characteristic function of $X_{(u,B)}$ equals
\begin{equation}
\phi_{X_{(u,B)}}(v) = \mathbb{E}[e^{iv^t X_{(u,B)}}] = \frac{\theta\left( u + \frac{i}{2\pi }v ,B \right)}{\theta(u,B)} =  \frac{1}{\theta} \sum_{a \in \mathbb{N}^g} \frac{i^{|a|}}{(2\pi  )^{|a|}}\frac{1}{a!}(D^a_u\theta)\cdot  v^a.
\end{equation}
Consequently, the moments, central moments and cumulants are given by
\begin{align}
  \mu_{a} [X_{(u,B)}] &= \frac{1}{(2\pi  )^{|a|}} \frac{1}{\theta} D^a_u\theta, \\
  m_{a}[X_{(u,B)}] &=  \frac{1}{(2\pi )^{|a|}} \frac{1}{\theta} \sum_{0\leq b\leq a} \binom{a}{b} \frac{(-1)^{|b|}}{\theta^{|b|}} (D_u \theta)^b D^{a-b}_u\theta, \\
  \kappa_{a}[X_{(u,B)}] &= \frac{1}{(2\pi )^{|a|}} D^a_u(\log \theta).
\end{align}
where the expressions on the right hand side are evaluated at $(u,B)$.
\end{proposition}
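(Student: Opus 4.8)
The plan is to compute the characteristic function directly from the series defining the discrete Gaussian, recognize it as a ratio of theta values, and then extract the moments, central moments, and cumulants by matching the relevant generating functions. First I would write
\[
\phi_{X_{(u,B)}}(v) = \mathbb{E}[e^{iv^t X_{(u,B)}}] = \frac{1}{\theta(u,B)}\sum_{n\in\mathbb{Z}^g} \e\left(-\tfrac{1}{2}n^tBn + n^tu\right)e^{iv^tn}.
\]
The key observation is that $e^{iv^tn} = \e\left(\frac{i}{2\pi} v^t n\right)$, since $\e(x)=e^{2\pi x}$, so the exponent $-\tfrac12 n^tBn + n^tu + \frac{i}{2\pi}v^tn$ is exactly the theta exponent with $u$ replaced by $u + \frac{i}{2\pi}v$. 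This gives the closed form $\theta\!\left(u+\frac{i}{2\pi}v,B\right)/\theta(u,B)$ immediately, provided $u+\frac{i}{2\pi}v$ still lies in the domain of convergence; since $\operatorname{Re}B\succ 0$ is unaffected by shifting $u$, the series converges for all $v$ and the manipulation is justified. The last equality in the first display is then just the multivariate Taylor expansion of $\theta$ in the $u$-variable evaluated at the shift $\frac{i}{2\pi}v$, i.e.\ $\theta\!\left(u+\frac{i}{2\pi}v,B\right) = \sum_{a} \frac{1}{a!}\left(\frac{i}{2\pi}\right)^{|a|}(D_u^a\theta)\,v^a$.

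For the moments I would match this Taylor expansion against the defining power series $\phi_X(v)=\sum_a \frac{i^{|a|}}{a!}\mu_a[X]v^a$: comparing coefficients of $v^a$ gives $\frac{i^{|a|}}{a!}\mu_a = \frac{1}{\theta}\frac{i^{|a|}}{(2\pi)^{|a|}a!}D_u^a\theta$, and cancelling the common factors yields $\mu_a = \frac{1}{(2\pi)^{|a|}}\frac{1}{\theta}D_u^a\theta$. For the cumulants the argument is even cleaner: the cumulant generating function is $\log\mathbb{E}[e^{v^tX}]$, which by the same shift computation equals $\log\theta\!\left(u+\frac{1}{2\pi}v,B\right) - \log\theta(u,B)$ (now with a real shift, replacing $i v$ by $v$ throughout). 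Since the constant term drops out under $D_v^a$ for $|a|\ge 1$, applying $D_v^a$ and evaluating at $v=0$ turns the $\frac{1}{2\pi}$ inside the argument into a factor $\frac{1}{(2\pi)^{|a|}}$ by the chain rule, giving $\kappa_a = \frac{1}{(2\pi)^{|a|}}D_u^a(\log\theta)$.

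The central moments require a short combinatorial step. Using $m_a[X]=\mu_a[X-\mu]$ and expanding each factor $(X_i-\mu_i)^{a_i}$ by the binomial theorem, I would write $m_a = \sum_{0\le b\le a}\binom{a}{b}(-1)^{|b|}\mu^b\,\mu_{a-b}$, where $\mu^b = \prod_i \mu_i^{b_i}$. Substituting $\mu_i = \frac{1}{2\pi}\frac{1}{\theta}\frac{\partial\theta}{\partial u_i}$ (so that $\mu^b = \frac{1}{(2\pi)^{|b|}}\frac{1}{\theta^{|b|}}(D_u\theta)^b$) and $\mu_{a-b}=\frac{1}{(2\pi)^{|a-b|}}\frac{1}{\theta}D_u^{a-b}\theta$, and collecting the powers of $2\pi$ into the overall factor $\frac{1}{(2\pi)^{|a|}}$, produces exactly the stated formula. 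I expect the main obstacle to be bookkeeping rather than conceptual: one must keep the multi-index binomial coefficients, the signs, and the powers of $\theta$ and $2\pi$ consistent, and take care that interchanging summation over $n$ with differentiation in $u$ is legitimate — but this is guaranteed by the local uniform convergence of the theta series, which follows from $\operatorname{Re}B\succ 0$ exactly as in the convergence argument already invoked for holomorphicity.
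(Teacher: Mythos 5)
Your proposal is correct and follows essentially the same route as the paper: recognize the shift $u \mapsto u + \tfrac{i}{2\pi}v$ in the theta exponent to get the closed form of the characteristic function, Taylor-expand in $u$ to read off the moments, expand $\mu_a[X-\mu]$ via the multi-index binomial theorem for the central moments, and identify the cumulant generating function with $\log\theta(u+\tfrac{1}{2\pi}v,B)-\log\theta(u,B)$. The only cosmetic difference is that the paper organizes the central-moment computation as a product of two power series (the characteristic functions of $-\mu$ and of $X$) rather than a direct binomial expansion, but these are the same calculation.
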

\begin{proof}
  By definition,
  \begin{align}
    \phi_{X_{(u,B)}}(v) &= \mathbb{E}[e^{iv^t X_{(u,B)}}] = \mathbb{E}\left[\e\left( \frac{i}{2\pi }v^t X_{(u,B)} \right)\right]  = \frac{1}{\theta}\sum_{n\in \mathbb{Z}^g}\e\left(- \frac{1}{2}n^t B n + n^t u \right)\e\left( \frac{i}{2\pi }v^t n \right)\\
                           & = \frac{1}{\theta} \sum_{n\in \mathbb{Z}^g}\e\left( -\frac{1}{2}n^t B n + n^t \left( u + \frac{i}{2\pi }v \right) \right) = \frac{\theta\left( u + \frac{i}{2\pi }v ,B \right)}{\theta(u,B)} \\
    & = \frac{1}{\theta} \sum_{a \in \mathbb{N}^g} \frac{i^{|a|}}{(2\pi  )^{|a|}}\frac{1}{a!}(D^a_u\theta)(u,B)\cdot  v^a
  \end{align}
  where the last equality follows from the Taylor expansion of $\theta(u+\frac{i}{2\pi }v,B)$ around $u$. The formula for $\mu_a$ is obtained by substituting $\phi_X(-iv)$. To get the central moments, we observe that if $\mu=\mu[X_{(u,B)}]$ then $m_a[X_{(u,B)}] = \mu_a[X_{(u,B)}-\mu]$. Hence, we get them from the characteristic function of $X_{(u,B)}-\mu$:
  \begin{align}
   \phi_{X_{(u,B)}-\mu}(v) & = \phi_{\mu}(-v)\phi_{X_{(u,B)}}(v) = e^{-iv^t \mu}\left( \sum_{c\in \mathbb{N}^g} \frac{i^{|c|}}{c!}\mu_c[X_{(u,B)}] v^c \right) \\
   & = \left( \sum_{b\in \mathbb{N}^g}\frac{i^{|b|}}{b!} (-1)^{|b|}\mu^b v^b \right) \left( \sum_{c\in \mathbb{N}^g} \frac{i^{|c|}}{c!}\mu_c[X_{(u,B)}] v^c \right) \\
   & = \sum_{a\in \mathbb{N}^g}\left( \sum_{b+c=a} \frac{i^{|a|}}{b! c!} (-1)^{|b|} \mu^b \cdot \mu_c[X_{(u,B)}] \right) v^a \\
   & = \sum_{a \in \mathbb{N}^g} \frac{i^{|a|}}{a!} \left( \sum_{ 0 \leq b \leq  a} \binom{a}{b}(-1)^{|b|}\mu^b \cdot \mu_{a-b}[X_{(u,B)}] \right) v^a.
  \end{align}
  Substituting $\mu=\frac{1}{\theta} \frac{1}{2\pi } D_u\theta$ and $\mu_{a-b}[X_{(u,B)}] = \frac{1}{\theta}\frac{1}{(2\pi )^{|a-b|}} D^{a-b}_u\theta$ (we suppress the evaluation at $(u,B)$ for readability) we get
\begin{align}
 \phi_{X_{(u,B)}-\mu}(v) & = \sum_{a \in \mathbb{N}^g} \frac{i^{|a|}}{a!} \left( \frac{1}{\theta} \frac{1}{(2\pi )^{|a|}}\sum_{ 0 \leq b \leq  a} \binom{a}{b}\frac{(-1)^{|b|}}{\theta^{|b|}}(D_u\theta)^b \cdot D^{a-b}_u\theta \right) v^a
\end{align}
which is what we want.

To conclude, we compute the cumulants. By definition we have
\begin{align*}
     \log \mathbb{E}[e^{v^t X_{(u,B)}}] 
    &= \log  \frac{1}{\theta}\sum_{n\in \mathbb{N}^g} \e\left(\frac{1}{2\pi } v^t n\right)\e\left(- \frac{1}{2}n^t B n + u^t n \right)\\
    &= \log  \frac{1}{\theta}\sum_{n\in \mathbb{N}^g}\e\left( -\frac{1}{2}n^t B n + \left(u + \frac{1}{2\pi }v \right)^t n \right) = \log \theta\left( u + \frac{v}{2\pi }, B \right) - \log \theta(u,B)
  \end{align*}
And taking derivatives with respect to $v$ we get
  \begin{equation}
   \kappa_a[X_{(u,B)}] = \frac{1}{(2\pi )^{|a|}} D^a_u(\log \theta),
 \end{equation}
 where we suppress again the evaluation at $(u,B)$.
\end{proof}

\begin{remark}[Mean and covariance]\label{momentcovariance}
  In particular, we can write down explicitly the mean vector and the covariance matrix of $X_{(u,B)}$. Indeed, the first moments always coincide with the first cumulants, so that
  \begin{equation}
   \mu_i[X_{(u,B)}] = \frac{1}{2 \pi } \frac{1}{\theta}\frac{\partial \theta}{ \partial u_i} = \frac{1}{2\pi }  \frac{\partial \log \theta }{\partial u_i} = \kappa_i[X_{(u,B)}].
  \end{equation}
  For the covariances we also have that the second central moments coincide with the second cumulants, so that
  \begin{equation}\label{covariances}
    m_{ij}[X_{(u,B)}] = \frac{1}{(2\pi )^2} \frac{1}{\theta^2}\left( \theta \frac{\partial^2 \theta}{\partial u_i \partial u_j} - \frac{\partial \theta}{\partial u_i}\frac{\partial \theta}{ \partial u_j} \right) =  \frac{1}{(2\pi )^2} \frac{\partial^2 \log \theta}{\partial u_i \partial u_j} = \kappa_{ij}[X_{(u,B)}].
  \end{equation}
  We can compare these formulas with those obtained from the theory of exponential families in Corollary \ref{cor:bij}. In particular, from the expression for the covariances, we get that
  \begin{equation}\label{heatequation}
\frac{\partial \theta}{\partial B_{ii}} = -\frac{1}{4 \pi } \frac{\partial^2\theta}{\partial u_i^2 } \quad  \text{ and } \quad
\frac{1}{2}\frac{\partial \theta}{\partial B_{ij}} = -\frac{1}{4 \pi } \frac{\partial^2\theta}{\partial u_i \partial u_j} \quad \text{ if } i<j  
\end{equation}
which is precisely the heat equation for the theta function \cite[Proposition 8.5.5]{BL}.
\end{remark}

\begin{remark}\label{parityfunctions}
As a consequence of  Lemma \ref{parity}, we see that the moments $\mu_a[X_{(u,B)}]$, the central moments $m_a[X_{(u,B)}]$ and the cumulants $\kappa_{a}[X_{(u,B)}]$ are even or odd functions of $u$, depending on whether $|a|$ is even or odd. In particular, we get immediately that $\mu[X_{(0,B)}]=0$.   
\end{remark}

We can also compute the entropy of  discrete Gaussian random variables.

\begin{proposition} \label{prop:entropy}
  Fix $(u,B) \in \mathbb{C}^g \times \mathbb{H}_g \setminus \Theta_g$, and set $\mu=\mu[X_{(u,B)}]$ and $\Sigma = \operatorname{Cov}[X_{(u,B)}]$. Then the entropy of $X_{(u,B)}$ is given by
  \begin{align}\label{entropydiscretegaussian}
    H[X_{(u,B)}] & = \log \theta - \frac{1}{\theta}\left( \langle u,D_u\theta \rangle + \frac{1}{2} \langle B,D_{B}\theta+\operatorname{diag}(D_{B}\theta) \rangle\right) \\
    & = \log \theta - 2\pi  \langle u,\mu \rangle + \pi  \langle B, \Sigma + \mu \mu^t \rangle 
 \end{align}
 where $\langle \cdot,\cdot \rangle $ denotes the standard scalar product.
\end{proposition}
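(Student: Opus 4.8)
The plan is to start directly from the definition $H[X_{(u,B)}] = \mathbb{E}[-\log p_\theta(X_{(u,B)})]$ and feed in the explicit density. Since $\e(x) = e^{2\pi x}$, taking the logarithm of $p_\theta(n;u,B) = \frac{1}{\theta}\e(-\frac12 n^t B n + n^t u)$ gives the degree-two polynomial
\[
-\log p_\theta(n;u,B) = \log\theta(u,B) + \pi\, n^t B n - 2\pi\, n^t u.
\]
Taking expectations term by term, and using linearity, immediately reduces everything to the first and second moments of $X_{(u,B)}$:
\[
H[X_{(u,B)}] = \log\theta + \pi\,\mathbb{E}[X_{(u,B)}^t B X_{(u,B)}] - 2\pi\,\mathbb{E}[X_{(u,B)}^t u].
\]
This is the conceptual core; the rest is rewriting the two moment terms.

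For the second displayed equality I would evaluate these two expectations in terms of $\mu$ and $\Sigma$. The linear term is simply $\mathbb{E}[X^t u] = \langle u,\mu\rangle$. For the quadratic term, writing $X^t B X = \sum_{i,j} B_{ij} X_i X_j$ and recalling that the matrix of second moments satisfies $\mathbb{E}[XX^t] = \Sigma + \mu\mu^t$, one obtains $\mathbb{E}[X^t B X] = \langle B,\Sigma+\mu\mu^t\rangle$. Substituting these two identities yields the second equality at once.

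To recover the first equality in terms of $\theta$ and its derivatives, I would instead replace $\mu$ and the second moments by their theta-function expressions from Proposition~\ref{explicitmoments} and Remark~\ref{momentcovariance}: namely $\mu = \frac{1}{2\pi}\frac{1}{\theta}D_u\theta$, whence $-2\pi\langle u,\mu\rangle = -\frac{1}{\theta}\langle u,D_u\theta\rangle$, and $\mathbb{E}[X_iX_j] = \frac{1}{(2\pi)^2}\frac{1}{\theta}\frac{\partial^2\theta}{\partial u_i\partial u_j}$, whence $\pi\langle B,\Sigma+\mu\mu^t\rangle = \frac{1}{4\pi\theta}\sum_{i,j}B_{ij}\frac{\partial^2\theta}{\partial u_i\partial u_j}$. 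The final move is to trade the $u$-derivatives for $B$-derivatives through the heat equation (\ref{heatequation}), in the two forms $\frac{\partial^2\theta}{\partial u_i^2} = -4\pi\frac{\partial\theta}{\partial B_{ii}}$ and $\frac{\partial^2\theta}{\partial u_i\partial u_j} = -2\pi\frac{\partial\theta}{\partial B_{ij}}$ for $i<j$.

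The one point demanding care—and the main, if mild, obstacle—is the bookkeeping of diagonal versus off-diagonal entries together with the factor of two produced by the symmetry of $B$. Concretely, I would split $\sum_{i,j}B_{ij}\frac{\partial^2\theta}{\partial u_i\partial u_j}$ into its diagonal part plus twice its strictly upper-triangular part, and then apply the two cases of the heat equation separately, obtaining $-4\pi\bigl(\sum_i B_{ii}\frac{\partial\theta}{\partial B_{ii}} + \sum_{i<j}B_{ij}\frac{\partial\theta}{\partial B_{ij}}\bigr)$. The key observation is that this combination equals $-2\pi\langle B, D_B\theta + \operatorname{diag}(D_B\theta)\rangle$: the extra summand $\operatorname{diag}(D_B\theta)$ compensates exactly for the halved off-diagonal convention built into the heat equation, so that the diagonal entries are weighted twice and the off-diagonal once. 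This gives $\pi\langle B,\Sigma+\mu\mu^t\rangle = -\frac{1}{2\theta}\langle B, D_B\theta+\operatorname{diag}(D_B\theta)\rangle$, and assembling the three pieces delivers the first equality, completing the proof.
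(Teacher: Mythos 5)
Your argument is correct and follows essentially the same route as the paper: expand $-\log p_\theta(n)$ as the quadratic $\log\theta + \pi n^tBn - 2\pi n^tu$, take expectations, and pass between the two displayed forms via the moment formulas of Proposition \ref{explicitmoments} and the heat equation (\ref{heatequation}). The only (immaterial) difference is the order — the paper obtains the theta-derivative form first by recognizing the lattice sums as $\partial\theta/\partial B_{ij}$ directly, whereas you obtain the moment form first and then convert — and your bookkeeping of the diagonal versus off-diagonal factor of two is exactly right.
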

\begin{proof}
  We compute:
  \begin{align*}
    H[X_{(u,B)}] &= - \left( \sum_{n\in \mathbb{Z}^g} \log \left( \frac{\e\left( -\frac{1}{2}n^tB n + n^tu \right)}{\theta(u,B)} \right)  \frac{\e\left( -\frac{1}{2} n^tB n + n^tu \right)}{\theta(u,B)}  \right) = \\
                    & \sum_{n\in \mathbb{Z}^g} \left(\log \theta(u,B) + \pi n^tB n - 2\pi  n^tu  \right)  \frac{\e\left( -\frac{1}{2}n^tB n + n^tu \right)}{\theta(u,B)}   = \\
    & \log \theta(u,B) + \frac{1}{\theta}\left( \pi \sum_{n\in\mathbb{Z}^g} ( n^tB n) \e\left(- \frac{1}{2}n^tB n + n^tu \right) - 2\pi \sum_{n\in\mathbb{Z}^g} ( n^tu) \e\left(- \frac{1}{2}n^tB n + n^tu \right)  \right).
  \end{align*}
  Now, differentiating the theta function term by term, we see that
  \begin{equation}
  \frac{\partial \theta}{\partial u_i} =  2\pi  \sum_{n\in \mathbb{Z}^g} n_i \e\left(- \frac{1}{2}n^tB n + n^tu \right)  
\end{equation}
whereas for $i<j$, 
\begin{equation}
\frac{\partial \theta}{\partial B_{ii}} = -\pi  \sum_{n\in \mathbb{Z}^g} n_i^2\, \e\left(- \frac{1}{2}n^tB n + n^tu \right) \quad  \quad
 \frac{\partial \theta}{\partial B_{ij}} = -\pi  \sum_{n\in \mathbb{Z}^g} 2n_in_j\, \e\left(- \frac{1}{2}n^tB n + n^tu \right).  \end{equation}
With this, the first equality in (\ref{entropydiscretegaussian}) follows. For the second, we know from Proposition \ref{explicitmoments} that $\frac{1}{\theta}D_u\theta = 2\pi \mu$, so we just need to check that $\frac{1}{\theta}\frac{D_B\theta + \operatorname{diag}(D_B\theta)}{2} = -\pi(\Sigma+\mu \mu^t)$. To do this, it is enough to apply Proposition \ref{explicitmoments} together with the heat equation for the theta function (\ref{heatequation})
\end{proof}

\begin{remark}
We should point out that, since $\theta$ is a complex function, the logarithm $\log \theta$ is defined just up to a constant. However, when $u,B$ are real, we can take the standard determination of the logarithm on $\mathbb{C}$, which is the one that coincides with the usual logarithm on the positive real axis. In that case the above formula gives the entropy of the real discrete Gaussian. 
\end{remark}

\subsection{Numerical examples} \label{rmk:comp} 
A nice consequence of the bijection in Corollary \ref{cor:bij} and the expressions in Proposition \ref{explicitmoments} is that we can compute numerically the real parameters $u,B$ from given moments $\mu,\Sigma$ by solving the gradient system   
\begin{align} 
  \mu & = \frac{1}{2\pi } \frac{1}{\theta}D_u\theta \label{syst1}  \\
  \Sigma + \mu \mu^t & = -\frac{1}{2\pi } \frac{1}{\theta}(D_{B}\theta + \operatorname{diag}(D_{B}\theta)) \label{syst2}.
  \end{align}
It provides an advantage with respect to parametrizations like in \cite{KEMP} where the constant is not easy to compute. Indeed, geometers have long been interested in computing the Riemann Theta function and there are several numerical implementations \cite{BOB}. Thus, in principle, the discrete Gaussian distribution we propose can be computed effectively for applications. 

In fact, in \texttt{SAGE} using the package \texttt{abelfunctions} and with the help of Lynn Chua (who has maintained and implemented similar functions), we were able to compute with numerical precision the corresponding $u,B$ for given choices of $\mu,\Sigma$. A desirable step following this work would be to have a fully efficient implementation for all $g\geq 1$, for instance as an \texttt{R} package. 

For example, let $g=1$  and consider the sample with 10 data points $(1, 0, 1, -2, 1, 2, 3, -2, 1, -1 )$ that measure some discrete error. Then the sample mean is $\hat{\mu}= 0.4$ and the sample standard deviation is $\hat{\sigma}=1.6465$. Assuming a univariate discrete Gaussian model and solving numerically the system \eqref{syst1}, \eqref{syst2} we get $\hat{u} = 0.023 $ and $\hat{B} = 0.0587 $ as the maximum likelihood estimates. 

Now suppose we want to know the corresponding real parameters $u,B$ for a ``standard'' discrete Gaussian on $\ZZ$ with mean
$\mu=0$ and variance $\sigma^2=1$. By the parity property  of Remark \ref{parity}, we must have $u=0$ (see also Remark \ref{parityfunctions}). Solving numerically for $B$ in \eqref{syst2}, we get 
\begin{equation}\label{approximate2pi}
B \approx 0.1591549.
\end{equation} 
We can generalize this to a standard discrete Gaussian on $\mathbb{Z}^g$ with mean vector $\mu=(0,0,\ldots,0)$ and identity covariance matrix $\Sigma= Id$. Indeed, the parity property gives again $u=(0,0,\ldots,0)$. Moreover,  Proposition \ref{independence} implies that
\begin{equation}
B \approx \left( \begin{matrix}
0.1591549 & 0 &0 \\
0 & \ddots & 0 \\
0 & 0 &0.1591549 
\end{matrix} \right).
\end{equation} 

We can also go in the other direction and compute the mean vector and covariance matrix starting from the parameters $u$ and $B$. As an explicit example we take the Fermat quartic $C = \{ X_0^4+X_1^4+X_2^4 = 0 \}$ in $\mathbb{P}^2$. This is a smooth curve of genus $3$, so that its Jacobian is a principally polarized abelian variety of dimension $g=3$. We can compute the corresponding matrix parameter $B$ in \texttt{MAPLE} by using the function \texttt{periodmatrix}. We get that
\begin{equation}
B = \begin{pmatrix} 1-i & -0.5+0.5 i & 0.5 + 0.5 i \\ -0.5+0.5i & 1.25-0.25i & -0.75 + 0.25 i \\ 0.5 + 0.5 i & -0.75 + 0.25 i & 0.75 + 0.25i   \end{pmatrix}.
\end{equation}
Then, we can compute the mean vector and the covariance matrix of the discrete Gaussian on $\mathbb{Z}^3$ with parameters $u=0$ and $B$. Since $u=0$, the mean vector is also zero by Remark \ref{parityfunctions}. We can compute the covariance matrix numerically and we get
\begin{small}
\begin{equation*}
  \Sigma \approx
  \begin{pmatrix}
    -3.2885 + 8.1041 \cdot  10^{-9}i     & 0.1335 \cdot 10^{-6} + 0.8432 \cdot10^{-7}i & 5.989 + 0.635 \cdot 10^{-7}i \\
    0.1335\cdot 10^{-6} + 0.8432 \cdot 10^{-7}i & -6.283 + 5.696 i  & -6.283 + 5.696  i \\
    5.989 + 0.635\cdot 10^{-7}i & -6.283 + 5.696 i & -18.262 + 5.696 i 
  \end{pmatrix}.
\end{equation*}
\end{small}

\begin{remark}
From the approximation \eqref{approximate2pi}, one could think that the exact parameter for the ``standard" discrete Gaussian on $\mathbb{Z}$ should be  $B=\frac{1}{2\pi}$. Unfortunately, this is not the case: indeed, we know from \eqref{covariances} that $\operatorname{Var}[X_{(0,B)}] = \frac{1}{(2\pi)^2}\frac{\partial^2\log \theta}{\partial u^2}(0,B)$. Now, the classical Jacobi identity \cite[Table V, p. 36]{MUM1} for the theta function gives
\begin{equation}
\theta\left( \frac{u}{iB} , \frac{1}{B} \right) = \sqrt{B}e^{-\frac{\pi}{B} u^2} \theta(u,B). 
\end{equation}
Hence, taking the second logarithmic derivative on both sides, evaluating at $u=0$ and dividing by  $\frac{1}{(2\pi)^2}$ we get
\begin{equation}
\frac{1}{(2\pi)^2}\frac{\partial^2 \log \theta}{\partial u^2}\left(0,
\frac{1}{B} \right) = \frac{B}{2\pi} - \frac{1}{(2\pi)^2}\frac{\partial^2 \log \theta}{\partial u^2}(0,B).
\end{equation}
Setting $B=2\pi$ we get:
\begin{equation}
\operatorname{Var}[X_{(0,\frac{1}{2\pi})}] = 1 - \operatorname{Var}[X_{(0,2\pi)}]
\end{equation}
and since $X_{(0,2\pi)}$ is a random variable on $\mathbb{Z}$ which is not constant, it follows that $\operatorname{Var}[X_{(0,2\pi)}]\ne 0$ so that $\operatorname{Var}[X_{(0,\frac{1}{2\pi})}] \ne 1$. It would be interesting to learn more about the true constant $B$.
\end{remark}

\section{Statistical theta functions}

Now, let's fix a parameter $B\in \mathbb{H}_g$ and let $(A_{B},\Theta_{B})$ be the corresponding principally polarized abelian variety. We have seen before that discrete Gaussians $X_{(u,B)}$ correspond, up to translations, to points in the open abelian variety $A_{B}\setminus \Theta_{B}$. Hence, every statistical function that is invariant under translations of random variables gives a well-defined function on $A_{B}\setminus \Theta_{B}$. Natural choices of such functions are the higher central moments and the higher cumulants. We show here that they actually define meromorphic functions on $A_{B}$.

\begin{proposition}\label{statisticalthetafunctions}
  Fix $B\in\mathbb{H}_g$ and $a\in \mathbb{N}^g$ with $|a|>1$. Then the higher central moment $m_a$ and the cumulant $\kappa_a$ define  meromorphic functions on $A_{B}$ with poles only along the theta divisor $\Theta_{B}$ of order at most $|a|$. Hence they can be seen as global sections in $H^0(A_B, |a|\Theta_{B})$.
 Moreover, they have poles precisely of order $|a|$ if and only if $(D_u\theta)^a$ is not identically zero along $\Theta_B$. In particular, this happens if $\Theta_{B}$ is irreducible.
\end{proposition}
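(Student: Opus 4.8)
The plan is to read off both statements directly from the closed forms in Proposition \ref{explicitmoments}, using the quasiperiodicity of Proposition \ref{quasiperiodicitydg} to descend from $\mathbb{C}^g$ to $A_B=\mathbb{C}^g/\Lambda_B$. First I would establish invariance and meromorphy. Translating the parameter $u$ by a period $Bn+im$ replaces $X_{(u,B)}$ by $X_{(u,B)}+n$, and both central moments and cumulants of order $|a|>1$ are unchanged when a constant vector is added to the random variable. Hence $u\mapsto m_a[X_{(u,B)}]$ and $u\mapsto \kappa_a[X_{(u,B)}]$ are $\Lambda_B$-periodic and descend to functions on $A_B\setminus\Theta_B$. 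Equivalently, under a period $\log\theta$ shifts by the affine function $2\pi(\tfrac12 n^tBn+n^tu)$, which is annihilated by $D_u^a$ precisely because $|a|>1$; this is exactly where the hypothesis is used. Since the formulas of Proposition \ref{explicitmoments} write $m_a$ and $\kappa_a$ as ratios of holomorphic functions (polynomials in $\theta$ and its $u$-derivatives) over powers of $\theta$, both are meromorphic with polar locus contained in $\Theta_B=\{\theta=0\}$.

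Next I would pin down the pole order. For the cumulant the formula $\kappa_a=\frac{1}{(2\pi)^{|a|}}D^a_u(\log\theta)$ makes this transparent: the $|a|$-th logarithmic derivative has denominator at most $\theta^{|a|}$, and its most singular part is the standard leading term $\tfrac{(-1)^{|a|-1}(|a|-1)!}{(2\pi)^{|a|}}(D_u\theta)^a/\theta^{|a|}$, obtained by always differentiating the innermost $\theta^{-1}$ factor. For the central moment I would clear denominators to write $m_a=N/((2\pi)^{|a|}\theta^{|a|})$ with $N$ holomorphic; note the top index $b=a$ term contributes $(-1)^{|a|}(D_u\theta)^a$ after the cancellation $D_u^{0}\theta=\theta$, bounding the order by $|a|$. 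The crux is then a short combinatorial identity: restricting $N$ to $\Theta_B$, only the terms with $|b|=|a|-1$ survive alongside the top term, and since $(D_u\theta)^{a-e_j}\,\partial_{u_j}\theta=(D_u\theta)^a$ they recombine to give
\[
N\big|_{\Theta_B} \;=\; (-1)^{|a|-1}(|a|-1)\,(D_u\theta)^a\big|_{\Theta_B}.
\]
In both cases the scalar prefactor is nonzero exactly because $|a|>1$, so the order-$|a|$ pole is genuine if and only if $(D_u\theta)^a$ does not vanish identically on $\Theta_B$ (otherwise $\theta\mid N$ and the order drops). Finally, multiplying by $\theta^{|a|}$ turns each $\Lambda_B$-periodic meromorphic function into a holomorphic function transforming by the automorphy factor of $|a|\Theta_B$, which is the statement that $m_a,\kappa_a\in H^0(A_B,|a|\Theta_B)$.

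For the last claim I would argue that if $\Theta_B$ is irreducible then $(D_u\theta)^a=\prod_i(\partial_{u_i}\theta)^{a_i}$ vanishes on $\Theta_B$ only if some single factor $\partial_{u_i}\theta$ (with $a_i>0$) vanishes on all of $\Theta_B$, since an irreducible divisor contained in a finite union of zero loci lies in one of them. It then remains to rule this out. Because the theta divisor of a principally polarized abelian variety is reduced, $\partial_{u_i}\theta$ vanishing on $\Theta_B$ forces $\theta\mid \partial_{u_i}\theta$, so $h:=\partial_{u_i}\theta/\theta$ is entire; differentiating the quasiperiodicity \eqref{quasiperiodicitytheta} shows $h(u+im)=h(u)$ and $h(u+Bn)=h(u)+2\pi n_i$. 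Expanding the $i\mathbb{Z}^g$-periodic entire function $h$ in a Fourier series in the imaginary directions and comparing the $k=0$ mode under the shift by $Be_i$ yields the impossible relation $c_0=c_0+2\pi$, a contradiction. Hence $(D_u\theta)^a\not\equiv0$ on $\Theta_B$ and the pole has order exactly $|a|$. I expect this last step to be the main obstacle: the bound on the order and the leading-coefficient identity are mechanical, but showing that no partial $\partial_{u_i}\theta$ can vanish along an irreducible $\Theta_B$ requires precisely this non-existence of entire functions with affine quasiperiods, and care must be taken with the reducedness input and the Fourier argument across the singular locus (of codimension at least two, so harmless by Hartogs).
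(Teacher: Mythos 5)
Your argument is correct, and its core -- reading the pole order off the explicit formula of Proposition \ref{explicitmoments}, isolating the $b=a$ term together with the terms $b=a-E_j$, and recombining them into the leading singular part with coefficient $(-1)^{|a|-1}(|a|-1)$ -- is exactly the computation in the paper (which writes the equivalent coefficient $(-1)^{|a|}(1-|a|)$). You deviate in two places. For the cumulants, the paper only remarks that one can argue similarly or use the linear relations between cumulants and central moments, whereas you exhibit the leading term $(-1)^{|a|-1}(|a|-1)!\,(D_u\theta)^a/\theta^{|a|}$ of $D^a_u\log\theta$ directly; and your explicit check that a period shifts $\log\theta$ by an affine function of $u$, killed by $D^a_u$ precisely when $|a|>1$, makes the descent to $A_B$ more self-contained than the paper's appeal to translation invariance of the statistics. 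The more substantial difference is the last claim: where the paper simply cites \cite[Proposition 4.4.1]{BL} for the fact that no $\partial\theta/\partial u_i$ vanishes identically along an irreducible $\Theta_B$, you prove it from scratch -- reducedness of the principal theta divisor makes $h=(\partial\theta/\partial u_i)/\theta$ entire, quasiperiodicity forces $h(u+im+Bn)=h(u)+2\pi n_i$, and comparing the constant Fourier mode of the $i\mathbb{Z}^g$-periodic function $h$ under the shift by $Be_i$ gives the contradiction $c_0=c_0+2\pi$. This is essentially the classical proof behind the cited result; it buys independence from the reference at the cost of invoking reducedness of the theta divisor, which you correctly identify as the needed input.
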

\begin{proof}

First,  we consider the higher central moments. The explicit expression for $m_a$ given in Proposition \ref{explicitmoments}, shows that $m_a$ is a meromorphic function on $A_{B}$ with poles at most along the theta divisor $\Theta_{B}$. Moreover, looking again at the explicit form, we see that the summand corresponding to $b$ has a pole of order at most $|b|+1$. The maximum value possible for $|b|$ is $|a|$ and it corresponds to the unique case $b=a$. The corresponding summand is
\begin{equation}
\frac{1}{(2\pi )^{|a|}} \frac{(-1)^{|a|}}{\theta^{|a|+1}}(D_u\theta)^a D_u^0 \theta = \frac{1}{(2\pi )^{|a|}} \frac{(-1)^{|a|}}{\theta^{|a|+1}}(D_u\theta)^a \theta = \frac{(-1)^{|a|}}{(2\pi i)^{|a|}} \frac{(D_u\theta)^a}{\theta^{|a|}},
\end{equation}
which has poles only along $\Theta_{B}$ of order at most $|a|$. This shows immediately that  $m_a$ has poles only along $\Theta_{B}$ of order at most $|a|$. For the cumulants $\kappa_{a}$ one can follow a similar reasoning by writing an explicit expression for $D^a_u \log \theta$, or one can use the fact that the cumulants are determined by the central moments through some explicit linear expressions \cite[eqn (7)]{MOMVAR}. 

Now we ask ourselves when do $m_a$ and $\kappa_a$ attain poles of order $|a|$. Looking at the expression of Proposition \ref{explicitmoments}, we consider the summands with $|b|=|a|-1$: these are of the form $a-E_i$, where $i$ is an index such that $a_i>0$. Taking the sum over these we get
\begin{align}
& -\frac{1}{(2\pi )^{|a|}} \frac{(-1)^{|a|}}{\theta^{|a|}} \sum_{\{i | a_i>0\}} a_i (D_u\theta)^{a-E_i} D_u^{E_i} \theta  = -\frac{1}{(2\pi )^{|a|}} \frac{(-1)^{|a|}}{\theta^{|a|}} \sum_{i} a_i (D_u\theta)^{a}  \\
= & -\frac{(-1)^{|a|}}{(2\pi )^{|a|}} \frac{(D_u\theta)^{a}}{\theta^{|a|}} \sum_{i} a_i = \frac{(-1)^{|a|}}{(2\pi )^{|a|}} \frac{(D_u\theta)^{a}}{\theta^{|a|}}(-|a|).
\end{align}
So we can write
\begin{equation}\label{highestpole}
\mu_a =  \frac{(-1)^{|a|}}{(2\pi )^{|a|}} \frac{(D_u\theta)^{a}}{\theta^{|a|}}(1-|a|) + f_a
\end{equation}
where $f_a$ is a meromorphic function with poles only along $\Theta_{B}$ of order strictly less than $|a|$. Since we are assuming $|a|>1$, we see that $1-|a|\ne 0$, hence $m_a$ has a pole of order $|a|$ along $\Theta_{B}$ if and only if $(D_u\theta)^{a}$ is not identically zero along $\Theta_{B}$. For the cumulants, one can follow a similar reasoning by using an explicit expression for $D^a_u \log \theta$ or the formula relating cumulants with central moments \cite{WILL}.  

To conclude, we need to show that if the theta divisor $\Theta_{B}$ is irreducible, then $D^a_u\theta$ does not vanish identically along it. Since $\Theta$ is irreducible and  $D^a_u\theta$ is a product of terms of the form $\frac{\partial \theta}{\partial u_i}$ it is enough to check that each one of these terms does not vanish identically along $\Theta_{B}$. But this follows from \cite[Proposition 4.4.1]{BL}.
\end{proof}

\begin{remark}
We see that the cumulants correspond to the logarithmic derivatives of the theta function, which are classical meromorphic functions on abelian varieties \cite[Section I.6, Method III]{MUM1}. However, the central moments give, to the best of our knowledge, new meromorphic functions. 
\end{remark}

\subsection{Statistical maps  of abelian varieties}

By definition, abelian varieties can be embedded into projective space. In algebraic geometry it is of great interest to produce rational maps from an abelian variety $A_B$ to different projective spaces. We show  that this is possible using statistical data, namely central moments and cumulants.

Let us fix $B\in \mathbb{H}_g$ and consider all the higher central moments $ [m_{a}]_{|a|\leq d,|a|\ne 1 }$ of order $d\geq 2$. Then, according to Proposition \ref{statisticalthetafunctions}, we get rational maps  $A_{B} \dashrightarrow \mathbb{P}^{N_{g,d}}$ where $ N_{g,d} = \binom{g+d}{d}-g-1$. Observe that we do not consider the linear central moments, because they are always zero by definition.
These maps can be seen as similar to the moment varieties for Gaussians and their mixtures studied in \cite{MOMVAR},\cite{IDENT}. 
We proceed to study here the geometry of these maps and derive some statistical consequences.

First, as in \cite{MOMVAR}, we can replace the central moments $m_a[X_{(u,B)}]$ with the cumulants $\kappa_a[X_{(u,B)}]$, which are often easier to work with. Moreover, we want to work with holomorphic functions instead of meromorphic functions, so that we multiply the coordinates everywhere by $\theta^d$.  The maps that we want to consider are the following:

\begin{definition}[Statistical maps of abelian varieties]
With the above notation, we define the \textit{statistical maps}   
\begin{equation}
 \phi_{d,B}\colon A_{B} \dashrightarrow \mathbb{P}^{N_{g,d}}, \qquad \phi_{d,B} = [\theta^d\kappa_{a}]_{|a|\leq d,|a|\ne 1 }, \qquad N_{g,d} = \binom{g+d}{d}-g-1. 
\end{equation}
\end{definition}

To study the geometry of these maps it will be useful to consider their restriction to the theta divisor. It turns out that this can be described in terms of the classical Gauss map of the theta divisor. We recall briefly the definition of the Gauss map here, and for more details one can look at \cite[Section 4.4]{BL}.

First, if we differentiate the quasiperiodicity relation (\ref{quasiperiodicitytheta}) we get:
\begin{equation}
\frac{\partial \theta}{\partial u_i}(u+im+B n, B) = 2\pi  n_i\, \e\left(  \frac{1}{2}n^tB n + n^tu \right)\theta(u,B) + \e\left( \frac{1}{2}n^tB n + n^tu \right) \frac{\partial \theta}{\partial u_i}(u, B).
\end{equation}
In particular, when  $\theta(u,B)=0$ this gives
\begin{equation}
\frac{\partial \theta}{\partial u_i}(u+im+B n, B) = \e\left( \frac{1}{2}n^tB n + n^tu \right) \frac{\partial \theta}{\partial u_i}(u, B).
\end{equation}
Hence, on the theta divisor $\Theta_{B} \subseteq A_{B}$  we have a well defined rational map
\begin{equation}\label{gaussmap}
\gamma \colon \Theta_{B} \dashrightarrow \mathbb{P}^{g-1} \qquad \gamma = \left[ \frac{\partial \theta}{\partial u_1}, \dots, \frac{\partial \theta}{\partial u_g} \right]
\end{equation}
called the \textit{Gauss map}. By construction, the Gauss map is not defined at the points of $\Theta_{B}$ where all the partial derivatives vanish. This means that the Gauss map is not defined precisely  at the singular points of $\Theta_{B}$. One can show that this is the same map induced by the complete linear system $H^0(\Theta_{B},\mathcal{O}_{\Theta_{B}}(\Theta_{B}))$. 

In the next lemma we show how to describe the restriction of $\phi_{d,B}$ to $\Theta_{B}$ in terms of the Gauss map.

\begin{lemma}\label{restrictiontotheta}
The restriction ${\phi_{d,B}}_{|\Theta_{B}}\colon \Theta_{B} \dashrightarrow \mathbb{P}^{N_{g,d}}$ corresponds to the composition
\begin{equation}
\Theta_{B} \overset{\gamma}{\dashrightarrow} \mathbb{P}^{g-1} \overset{v_d}{\hookrightarrow} \mathbb{P}^{\binom{g-1+d}{d}-1} \hookrightarrow \mathbb{P}^{N_{g,d}}
\end{equation}
where $v_d$ is the $d$-th Veronese embeddding of $\mathbb{P}^{g-1}$, and the last map is the  linear embedding in the last $\binom{g-1+d}{d}$ coordinates that sets all the others to zero.
\end{lemma}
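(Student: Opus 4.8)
The plan is to exploit the fact that each coordinate $\theta^d\kappa_a$ of $\phi_{d,B}$ is actually holomorphic — by Proposition \ref{statisticalthetafunctions} the cumulant $\kappa_a$ has a pole of order at most $|a|\le d$ along $\Theta_{B}$, so multiplying by $\theta^d$ clears the pole — and therefore restricts to a genuine regular function on the subvariety $\Theta_{B}=\{\theta=0\}$. The whole lemma then reduces to computing these restrictions coordinate by coordinate, grouping the indices according to the value of $|a|$: I expect the coordinates with $|a|<d$ to restrict to $0$, and those with $|a|=d$ to recover, up to one global scalar, the degree-$d$ monomials in the partial derivatives $\partial\theta/\partial u_i$.

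First I would pin down the leading polar part of $\kappa_a$ along $\Theta_{B}$. Working at a smooth point $u_0\in\Theta_{B}$ and replacing $\theta$ by its linear Taylor term $L(u)=\sum_i \tfrac{\partial\theta}{\partial u_i}(u_0)\,(u_i-u_{0,i})$, a direct induction on $|a|$ gives
\begin{equation}
D^a_u(\log\theta)=(-1)^{|a|-1}(|a|-1)!\,\frac{(D_u\theta)^a}{\theta^{|a|}}+(\text{pole of order}<|a|),
\end{equation}
so that, crucially, the top coefficient depends only on $|a|$ and not on $a$ itself. (Alternatively one reads this off the explicit expression for $D^a_u\log\theta$ alluded to in the proof of Proposition \ref{statisticalthetafunctions}, or transports the analogous statement for the central moments $m_a$ established there.) Hence for $|a|=d$ one has $\theta^d\kappa_a=\tfrac{(-1)^{d-1}(d-1)!}{(2\pi)^d}(D_u\theta)^a+\theta\cdot(\text{holomorphic})$, whose restriction to $\Theta_{B}$ is the nonzero constant $c_d:=\tfrac{(-1)^{d-1}(d-1)!}{(2\pi)^d}$ times $(D_u\theta)^a$; while for $|a|<d$ (including $|a|=0$) one factors $\theta^d\kappa_a=\theta^{\,d-|a|}\cdot(\theta^{|a|}\kappa_a)$ with $\theta^{|a|}\kappa_a$ holomorphic and $d-|a|\ge1$, so the restriction is $0$.

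It then remains to read off the geometry. On $\Theta_{B}$ the map ${\phi_{d,B}}_{|\Theta_{B}}$ is given by a tuple that vanishes in every slot with $|a|<d$ and equals $c_d\,(D_u\theta)^a=c_d\prod_i(\partial\theta/\partial u_i)^{a_i}$ in the slots with $|a|=d$. The common nonzero scalar $c_d$ is irrelevant in $\mathbb{P}^{N_{g,d}}$, so these surviving coordinates are exactly the degree-$d$ monomials evaluated at $[\partial\theta/\partial u_1:\dots:\partial\theta/\partial u_g]=\gamma$; that is, they are $v_d\circ\gamma$ landing in the $\binom{g-1+d}{d}$ coordinates indexed by $|a|=d$, with all other coordinates zero. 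This is precisely the asserted composition $\Theta_{B}\xrightarrow{\gamma}\mathbb{P}^{g-1}\xrightarrow{v_d}\mathbb{P}^{\binom{g-1+d}{d}-1}\hookrightarrow\mathbb{P}^{N_{g,d}}$, the base locus of the rational map being as usual the singular locus of $\Theta_{B}$ where $\gamma$ is undefined.

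The step I expect to be the main obstacle — and the one deserving genuine care — is the first one: verifying that the top-order polar coefficient of $D^a_u\log\theta$ is a nonzero constant \emph{independent of} $a$. This is what guarantees that the surviving coordinates assemble into an honest (merely globally rescaled) Veronese image rather than a degenerate reparametrization that could collapse or kill individual coordinates. The local linearization $\theta\approx L$ makes this transparent, but one must check that it is legitimate to discard the higher Taylor terms of $\theta$ when extracting only the leading pole (their contribution to $D^a_u\log\theta$ has pole order strictly less than $|a|$), and one should confirm the bookkeeping that the $|a|=d$ indices are precisely the ``last $\binom{g-1+d}{d}$ coordinates'' matching the stated linear embedding.
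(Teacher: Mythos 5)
Your proposal is correct and follows essentially the same route as the paper: expand each coordinate $\theta^d\kappa_a$ as a leading polar term $C\cdot(D_u\theta)^a/\theta^{|a|}$ plus lower-order poles, observe that after multiplying by $\theta^d$ everything with $|a|<d$ dies on $\Theta_B$ while the $|a|=d$ slots restrict to $C\cdot(D_u\theta)^a$, and recognize the surviving tuple as the Veronese of the Gauss map. You are in fact slightly more explicit than the paper on the one genuinely delicate point, namely that the leading coefficient $(-1)^{|a|-1}(|a|-1)!/(2\pi)^{|a|}$ depends only on $|a|$ and hence is a common scalar across all $|a|=d$ coordinates, which the paper absorbs into an unexamined constant $C$.
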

\begin{proof}
  By definition, the coordinates of $\phi_{d,B}$ are given by $\theta^d \kappa_{a}$, for $|a|\leq d, |a|\ne 1$. Reasoning as in the proof of Proposition \ref{statisticalthetafunctions},  we can write them in the form
\begin{equation}
\theta^d\kappa_a =  C \cdot  \frac{\theta^d}{\theta^{|a|}}(D_u\theta)^{a} + \theta^d g_a
\end{equation}
where $C$ is a nonzero constant and the $g_a$ are meromorphic functions with poles only along $\Theta_{B}$ and of order strictly smaller than $|a|$. In particular, since $|a|\leq d$, it follows that the functions $\theta^d g_a$ vanish identically along the theta divisor. The same is true for the functions $\frac{\theta^d}{\theta^{|a|}}(D_u\theta)^{a}$, when $|a|<d$. Hence, when we restrict the $\theta^d\kappa_a$ to the theta divisor $\Theta_{B}$, the only nonzero terms are those corresponding to $|a|=d$. In this case, we get
\begin{equation}\label{highestpolecumulant}
(\theta^d\kappa_a)_{|\Theta_{B}} =  C\cdot (D_u\theta)^{a}_{|\Theta_{B}},
\end{equation}
The common nonzero constant $C$ does not matter since we are taking a map into projective space.
Hence, we just need to consider the functions $\{(D_u\theta)^a_{|\Theta_{B}}\}_{|a|=d}$. However,  since the $(D_u\theta)^a$ correspond to taking monomials of degree $d$ in the partial derivatives $D_{u_i}\theta$, this is the same as the $d$-th Veronese embedding composed with the Gauss map, and we are done.
\end{proof}

Using this lemma we can draw consequences on the maps $\phi_{d,B}$. For example, it is immediate to see where the $\phi_{d,B}$ are defined.

\begin{corollary}\label{globalgeneration}
  The rational map $\phi_{d,B}\colon A_{B} \dashrightarrow \mathbb{P}^{N_{g,d}}$ is not defined precisely at the singular points of $\Theta_{B}$.
  In particular, it is defined everywhere if and only if $\Theta_{B}$ is smooth.
\end{corollary}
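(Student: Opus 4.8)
The plan is to compute the indeterminacy locus of $\phi_{d,B}$ directly: since $\phi_{d,B}$ is defined by holomorphic sections, it fails to be defined exactly at their common zero locus, so I would determine this base locus by splitting $A_{B}$ into the part off $\Theta_{B}$ and the part on $\Theta_{B}$, reducing the second part to the Gauss map via Lemma \ref{restrictiontotheta}, whose indeterminacy is already understood.

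More precisely, by Proposition \ref{statisticalthetafunctions} each coordinate $\theta^d\kappa_a$ of $\phi_{d,B}$ clears a pole of order at most $d$ along $\Theta_{B}$ and hence is a global holomorphic section in $H^0(A_{B},d\Theta_{B})$. Thus $\phi_{d,B}$ is undefined at a point $p$ if and only if all these sections vanish at $p$, i.e. at the base locus. I would first dispose of the complement of $\Theta_{B}$: off $\Theta_{B}$ we have $\theta\neq 0$, and the normalizing coordinate indexed by $a=0$ equals $\theta^d$ (coming from $m_0=1$), which is nonzero there. Hence $\phi_{d,B}$ is defined on all of $A_{B}\setminus\Theta_{B}$, and the base locus is contained in $\Theta_{B}$.

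It then remains to analyze $\Theta_{B}$. For $p\in\Theta_{B}$, evaluating a section at $p$ agrees with first restricting it to $\Theta_{B}$ and then evaluating, so $p$ is a base point of $\phi_{d,B}$ precisely when it is a base point of the restriction $\phi_{d,B}|_{\Theta_{B}}$. Now Lemma \ref{restrictiontotheta} identifies $\phi_{d,B}|_{\Theta_{B}}$ with the composite $\Theta_{B}\overset{\gamma}{\dashrightarrow}\mathbb{P}^{g-1}\overset{v_d}{\hookrightarrow}\mathbb{P}^{\binom{g-1+d}{d}-1}\hookrightarrow\mathbb{P}^{N_{g,d}}$, in which all maps after $\gamma$ are everywhere-defined embeddings; composition with a morphism creates no new indeterminacy, so the base locus of $\phi_{d,B}|_{\Theta_{B}}$ equals that of $\gamma$. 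Concretely, the proof of Lemma \ref{restrictiontotheta} shows that on $\Theta_{B}$ every coordinate with $|a|<d$ vanishes while the coordinate with $|a|=d$ restricts to $C\,(D_u\theta)^a$; hence all coordinates vanish at $p$ exactly when $(D_u\theta)^a(p)=0$ for every $|a|=d$, and taking $a=dE_i$ forces $\tfrac{\partial\theta}{\partial u_i}(p)=0$ for all $i$ (the converse being clear). This is exactly the condition that $\gamma$ be undefined at $p$, which, as recalled before the lemma, holds precisely at the singular points of $\Theta_{B}$. Combining with the previous paragraph, the base locus of $\phi_{d,B}$ is exactly $\operatorname{Sing}(\Theta_{B})$, so $\phi_{d,B}$ is defined everywhere if and only if $\Theta_{B}$ is smooth.

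The one point that must be handled with care — and the main obstacle — is the passage between the restriction $\phi_{d,B}|_{\Theta_{B}}$ and $\phi_{d,B}$ itself: I must verify both that no base point is hidden off $\Theta_{B}$ (secured by the nonvanishing of the coordinate $\theta^d$, which is also what makes the corollary true rather than merely bounding the base locus by $\operatorname{Sing}(\Theta_{B})$), and that evaluation of the sections commutes with restriction to $\Theta_{B}$, so that the base locus of $\phi_{d,B}$ along $\Theta_{B}$ genuinely coincides with that of its restriction.
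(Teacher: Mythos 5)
Your proposal is correct and follows essentially the same route as the paper: the coordinate $\theta^d$ rules out base points off $\Theta_{B}$, and Lemma \ref{restrictiontotheta} reduces the analysis on $\Theta_{B}$ to the indeterminacy locus of the Gauss map, i.e.\ the singular points of $\Theta_{B}$. The extra care you take (checking that base points on $\Theta_{B}$ are detected by the restricted sections, and that $a=dE_i$ forces each $\partial\theta/\partial u_i$ to vanish) only spells out details the paper leaves implicit.
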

\begin{proof}
  Since the first coordinate of the map is $\theta^d$, the map is defined everywhere outside of $\Theta_{B}$. Moreover, Lemma \ref{restrictiontotheta} shows that the restriction of the map to $\Theta_{B}$ is essentially given by the Gauss map composed with an embedding. Hence, it is not defined if and only if the Gauss map is not defined, which happens precisely at the singular points of $\Theta_{B}$. 
\end{proof}

We usually want maps that are nondegenerate, in the sense that its image cannot be contained in a hyperplane. We now give a sufficient condition for this not to happen to the maps $\phi_{d,B}$.

\begin{lemma}\label{nondegeneracy}
Suppose that the theta divisor $\Theta_{B}$ is irreducible. Then the image of the rational map $\phi_{d,B}\colon A_{B} \dashrightarrow \mathbb{P}^{N_{g,d}}$ is not contained in a hyperplane.
\end{lemma}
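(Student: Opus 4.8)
The plan is to show that no nontrivial linear form vanishes on the image, which amounts to proving that the holomorphic sections $\{\theta^d\kappa_a\}_{|a|\le d,\,|a|\ne 1}$ of $\mathcal{O}_{A_B}(d\Theta_B)$ are linearly independent. So I assume a relation
\begin{equation*}
\sum_{|a|\le d,\,|a|\ne 1} c_a\,\theta^d\kappa_a \equiv 0 \quad \text{on } A_B
\end{equation*}
and argue that all $c_a$ vanish, by a descending induction on the top degree that appears.

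First I would restrict the relation to the theta divisor $\Theta_{B}$. By the computation carried out in Lemma \ref{restrictiontotheta}, and specifically by (\ref{highestpolecumulant}), every summand with $|a|<d$ vanishes identically on $\Theta_{B}$, while for $|a|=d$ one has $(\theta^d\kappa_a)_{|\Theta_{B}}=C\,(D_u\theta)^a_{|\Theta_{B}}$ for a common nonzero constant $C$. Hence the restricted relation reads $\sum_{|a|=d} c_a\,(D_u\theta)^a \equiv 0$ on $\Theta_{B}$, which says exactly that the degree-$d$ form $F=\sum_{|a|=d}c_a\,x^a$ on $\mathbb{P}^{g-1}$ pulls back to zero under the Gauss map $\gamma$ of (\ref{gaussmap}).

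This is the step where irreducibility enters, and it is the main obstacle: I need that $F\circ\gamma\equiv 0$ forces $F=0$. This follows once the Gauss map $\gamma\colon\Theta_{B}\dashrightarrow\mathbb{P}^{g-1}$ is \emph{dominant}, for then its image is dense in $\mathbb{P}^{g-1}$ and no nonzero homogeneous form can vanish on it. For an irreducible theta divisor the underlying principally polarized abelian variety is indecomposable, and in this case the Gauss map is classically known to be generically finite onto $\mathbb{P}^{g-1}$, hence dominant (see \cite[Section 4.4]{BL}); locating and citing this fact in the precise form needed is the crux of the argument. Granting it, all coefficients $c_a$ with $|a|=d$ must vanish.

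Finally I would feed this back into the descent. With the top-degree coefficients gone, the relation becomes $\theta\cdot\big(\sum_{|a|\le d-1,\,|a|\ne 1} c_a\,\theta^{d-1}\kappa_a\big)\equiv 0$; since $\theta\not\equiv 0$ and the meromorphic functions on the irreducible variety $A_{B}$ form an integral domain, the second factor vanishes identically. This is precisely a linear relation among the coordinate functions of $\phi_{d-1,B}$, so the same restriction-to-$\Theta_{B}$ argument applies verbatim at level $d-1$ (the formula (\ref{highestpolecumulant}) has the identical shape with $d-1$ in place of $d$), killing the coefficients with $|a|=d-1$. Iterating, each round removes the current top-degree coefficients and divides out one factor of $\theta$; the descent terminates at level $2$, where killing the $|a|=2$ terms leaves $c_0\,\theta^{2}\equiv 0$ and hence $c_0=0$. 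Thus every $c_a=0$, so no hyperplane contains the image and $\phi_{d,B}$ is nondegenerate.
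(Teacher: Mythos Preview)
Your proposal is correct and follows essentially the same route as the paper: restrict a putative linear relation to $\Theta_{B}$, use Lemma \ref{restrictiontotheta} to see that only the top-degree terms survive as $(D_u\theta)^a_{|\Theta_B}$, invoke dominance of the Gauss map for irreducible $\Theta_{B}$ (the paper cites \cite[Proposition 4.4.2]{BL}) to kill those coefficients, and then reduce to a lower level. The only cosmetic difference is that the paper packages the reduction as an upward induction on $d$ while you unwind it as a descent with explicit division by $\theta$; these are the same argument.
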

\begin{proof}
  To show that the image is not contained in any hyperplane is the same as proving that the functions $\theta^d\kappa_{a}$ for $|a|\leq d,|a|\ne 1$ are linearly independent. We are going to prove linear independence by induction on $d$. The base cases $d=0$ and $d=1$ are just $\theta^d$. Now, assume we have a linear relation
  \begin{equation}
   \lambda_0\theta^d + \sum_{|a|=2}\lambda_a \theta^d\kappa_{a} + \dots + \sum_{|a|=d}\lambda_a\theta^d\kappa_{a} = 0, \qquad \text{ for certain } \lambda_a\in\mathbb{C}.
 \end{equation}
 By induction hypothesis, it is enough to prove that $\lambda_a=0$ for all $|a|=d$.  Since $\Theta_{B}$ is irreducible, Lemma \ref{statisticalthetafunctions} shows that each $\kappa_a$ has poles of order exactly $|a|$ along $\Theta_{B}$. Hence, if we restrict the above relation to $\Theta_{B}$, we get that $(\theta^d\kappa_a)_{|\Theta_{B}}=0$ for all $|a|<d$, and we are left with  
\begin{equation}
\sum_{|a|=d}\lambda_a(\theta^d\kappa_{a})_{|\Theta_B} = 0.
\end{equation}
It is then enough to show that the functions $(\theta^d\kappa_{a})_{|\Theta_B}$, for $|a|=d$, are linearly independent.

Equivalently,  we need to show that the image of the map $\Theta_{B}\dashrightarrow \mathbb{P}^{\binom{g-1+d}{d}-1}$ that these functions induce is not contained in any hyperplane. We know from Lemma \ref{restrictiontotheta} that this map is a composition
\begin{equation}
\Theta_{B} \overset{\gamma}{\dashrightarrow} \mathbb{P}^{g-1} \overset{v_d}{\hookrightarrow} \mathbb{P}^{\binom{g-1+d}{d}-1} 
\end{equation}
where $\gamma$ is the Gauss map and $v_d$ is a Veronese embedding. Since  $\Theta_{B}$ is irreducible, the Gauss map is dominant \cite[Proposition 4.4.2]{BL} and the image of the Veronese map is also not contained in any hyperplane. Hence, the image of the composite map is not contained in any hyperplane, and we are done.
\end{proof}

\begin{remark}\label{projection}
We can give a geometric interpretation of this result, which will help us study the maps $\phi_{d,B}$.  By construction,  $\phi_{d,B}$ is defined by a subset of functions in $H^0(A_{B},d\Theta_{B})$. It is known \cite[Proposition 4.1.5]{BL} that if $d\geq 2$, the complete linear system $H^0(A_{B},d\Theta_{B})$ induces everywhere defined maps
\begin{equation}
\psi_{d,B}\colon A_{B} \longrightarrow \mathbb{P}^{d^g-1}.
\end{equation}

If $\Theta_{B}$ is irreducible, Lemma \ref{nondegeneracy} says that our maps $\phi_{d,B}$ can be realized as a composition
\begin{equation}\label{linearprojection}
A_{B} \overset{\psi_{d,B}}{\longrightarrow} \mathbb{P}^{d^g-1} \overset{\pi}{\dashrightarrow} \mathbb{P}^{N_{g,d}}
\end{equation}
where the second map is a projection from a linear space. Hence, we can study $\phi_{d,B}$ through the maps  $\psi_{d,B}$, which are well-known for algebraic geometers. For example, the fibers of the map $\psi_{2,B}$ correspond exactly to the points $u,-u$ exchanged by the involution. In contrast, the maps $\psi_{d,B}$ are all closed embeddings for $d\geq 3$.
\end{remark}

We close this subsection with the following proposition.

\begin{proposition}\label{statisticgeometry}
Let $F\colon A_{B} \to A_{B'}$ be an isomorphism of polarized abelian varieties of dimension $g$. Then for every $d$ there exists a linear isomorphism $Q\colon \mathbb{P}^{N_{g,d}} \to \mathbb{P}^{N_{g,d}}$ such that 
\begin{equation}
\phi_{d,B'}\circ F = Q \circ \phi_{d,B}
\end{equation}
\end{proposition}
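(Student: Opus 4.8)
\textit{Plan.} The idea is that $F$ acts on the coordinate functions of $\phi_{d,B}$ by an invertible, degree-preserving linear substitution, so that $Q$ is nothing but the induced change of homogeneous coordinates. Throughout I abbreviate $\kappa_a^{(B)} := \kappa_a[X_{(u,B)}]$, viewed as a meromorphic function of $u$, so that after trivializing by $\theta^d$ the map $\phi_{d,B}$ is $[\,1 : \kappa_a^{(B)}\,]_{2\le |a|\le d}$.

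First I would lift $F$ to the universal covers. Every isomorphism of the complex tori $A_{B}$ and $A_{B'}$ lifts to an affine map $\widetilde F\colon \mathbb{C}^g\to\mathbb{C}^g$, $\widetilde F(u)=Lu+c$, whose linear part $L$ carries $\Lambda_B$ onto $\Lambda_{B'}$. Since $F^{-1}(\Theta_{B'})=\Theta_{B}$, the entire functions $\theta(\widetilde F(u),B')$ and $\theta(u,B)$ have exactly the same zero divisor, so their ratio $C(u)=\theta(\widetilde F(u),B')/\theta(u,B)$ is entire and nowhere vanishing; this is the automorphy factor of the Theta Transformation Formula (\ref{thetatransformation}). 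The key structural input I would then establish is that $\log C$ is a polynomial of degree at most $2$. Writing $C=e^{G}$ with $G$ entire (legitimate as $\mathbb{C}^g$ is simply connected and $C$ is nowhere zero), the quasiperiodicity (\ref{quasiperiodicitytheta}) of both theta factors under $\Lambda_B$ forces $G(u+\lambda)-G(u)$ to be affine in $u$ for each $\lambda\in\Lambda_B$; applying $\partial_{u_i}\partial_{u_j}$ makes every second partial of $G$ genuinely $\Lambda_B$-periodic, hence a holomorphic function on the compact torus $A_B$, hence constant. Thus $G$ is quadratic. (One may instead quote the explicit shape of $C(M,B,u)$ from \cite[Theorem 8.6.1]{BL}.)

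With this in hand I would differentiate the identity $\log\theta(\widetilde F(u),B')=G(u)+\log\theta(u,B)$. Because $\widetilde F$ is affine with constant linear part $L$, each $\partial_{u'}^{a}$ pulls back to a constant-coefficient combination of the operators $\partial_u^{b}$ with $|b|=|a|$, the coefficients being the entries of the induced action of $L$ on $\operatorname{Sym}^{|a|}$, which is invertible. Using the cumulant formula $\kappa_a=(2\pi)^{-|a|}D_u^a(\log\theta)$ of Proposition \ref{explicitmoments} together with $\deg G\le 2$, this yields, for $|a|\ge 2$,
\[
\kappa_a^{(B')}\!\bigl(\widetilde F(u)\bigr)=\sum_{|b|=|a|} c^{a}_{b}\,\kappa_b^{(B)}(u)+\varepsilon_a ,
\]
where the $c^a_b$ are the (invertible) substitution constants coming from $L$ and $\varepsilon_a$ is a constant which is nonzero only when $|a|=2$ (the surviving second derivative of $G$); for $|a|=0$ the relation is trivial. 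In particular $\kappa_a^{(B')}\circ F$ lies in the span $\langle 1,\kappa_b^{(B)}\rangle_{|b|\le d}$, so $F$ pulls the coordinate functions of $\phi_{d,B'}$ back into those of $\phi_{d,B}$, the additive constants $\varepsilon_a$ being absorbed by the homogenizing coordinate $1$ (equivalently, by the $|a|=0$ coordinate $\theta^d$, which is a holomorphic section by Proposition \ref{statisticalthetafunctions}).

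Finally I would record that the resulting matrix $Q=(Q_{ab})$ is block triangular for the grading by $|a|$, with diagonal blocks the invertible action of $L$ on each $\operatorname{Sym}^{|a|}$ and the $\varepsilon_a$ appearing only in the column of the constant coordinate; hence $Q$ is invertible and defines a linear automorphism of $\mathbb{P}^{N_{g,d}}$. Reading $\phi_{d,B'}\circ F$ coordinatewise as $[\,\kappa_a^{(B')}\circ F\,]_a=[\,\sum_b Q_{ab}\,\kappa_b^{(B)}\,]_a=Q\circ\phi_{d,B}$ gives the claim. I expect the only genuine obstacle to be the quadraticity of $\log C$ in the previous paragraph, together with the multi-index bookkeeping needed to confirm that the lower-order (lower pole order) contributions and the $|a|=2$ constants stay inside $\langle 1,\kappa_b^{(B)}\rangle$ rather than producing coordinates outside the image of $\phi_{d,B}$.
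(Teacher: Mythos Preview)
Your proposal is correct and follows essentially the same route as the paper: both reduce the claim to the fact that $\theta(\widetilde F(u),B')=C(u)\,\theta(u,B)$ with $\log C$ a quadratic polynomial in $u$, and then differentiate logarithms to see that the cumulants transform by an invertible linear substitution (the $\operatorname{Sym}^{|a|}$-action of the linear part $L$) plus constants in degree two. The only genuine difference is that the paper simply invokes the explicit Theta Transformation Formula \cite[Theorem 8.6.1]{BL} after writing $B'=MB$ and $F=F_{M,B}$ (up to translation), whereas you derive the quadraticity of $\log C$ intrinsically from quasiperiodicity and compactness of $A_B$; your argument is thus more self-contained and bypasses the $Sp(2g,\mathbb{Z})$ classification, but the underlying mechanism is identical.
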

\begin{proof}
If two polarized abelian varieties $(A_B,\Theta_{B})$ and $(A_{B'},\Theta_{B'})$ are isomorphic, we know from the discussion in Section \ref{familiesabvar} that it must be that $B' = MB$ for a certain $M\in Sp(2g,\mathbb{Z})$, and then the isomorphism is given by $F_{M,B}$ as in (\ref{isomorphismMtau}), eventually composed with a translation. Then, we can compute the relation between the logarithmic derivatives of $\theta(u,B)$ and $\theta(F_{M,B}(u),MB)$, using the full Theta Transformation Formula \cite[Theorem 8.6.1]{BL} , and the conclusion follows. 
\end{proof}

What this result is saying is that when we take two isomorphic polarized abelian varieties $(A_{B},\Theta_{B}) \cong (A_{B'},\Theta_{B'})$, then the images of the two maps $\phi_{d,B},\phi_{d,B'}$ differ only by a linear change of coordinates.  Hence, we can say that the image of $\phi_{d,B}$ depends only on the isomorphism class of the polarized abelian variety $(A_B,\Theta_B)$.

In the next two subsections we study in more detail the univariate case $g=1$ and the bivariate case $g=2$.

\subsection{The case $g=1$}
We first consider univariate discrete Gaussians, which correspond to elliptic curves on the abelian varieties side. In this case, we are going to see that actually, the maps $\phi_{d,B}$ coincide with the well-studied maps $\psi_{d,B}$. This means that \textit{elliptic normal curves are parametrized by cumulants of univariate discrete Gaussians}.

More precisely, fix a $B \in \mathbb{H}_1$, and for every $d\geq 2$, consider the map
\begin{equation}
\phi_{d,B}\colon A_{B} \longrightarrow \mathbb{P}^{d-1}, \qquad \phi_{d,B} = [\theta^d, \theta^d\kappa_2, \dots, \theta^d\kappa_d].
\end{equation}
For elliptic curves, the theta divisor consists of a single point, so that it is immediately smooth and irreducible. By Corollary \ref{globalgeneration}, the statistical map $\phi_{d,B}$ is defined everywhere, and according to Remark \ref{projection}, the map is a composition
\begin{equation}
A_{B} \overset{\psi_{d,B}}{\longrightarrow} \mathbb{P}^{d-1} \overset{\pi}{\dashrightarrow} \mathbb{P}^{N_{1,d}}
\end{equation}
where the second map is a linear projection. But $N_{1,d} = d-1$, so that the projection $\pi$ must be the identity. Hence, we can identify $\phi_{d,B}$ with $\psi_{d,B}$. This means the following:
\begin{itemize}
\item $d=2$:  the map $\phi_{2,B}\colon A_{B} \to \mathbb{P}^1$ is surjective of degree $2$. The fibers correspond to opposite points $u, -u$ of $A_{B}$. In particular, it is ramified exactly at the four points of order two in $A_{B}$.
\item $d \geq 3$: the map  $\phi_{d,B} \colon A_{B} \hookrightarrow \mathbb{P}^{d-1}$ is a closed embedding. The image is usually called an elliptic normal curve of degree $d$. In particular, the map $\phi_{3,B}$ embeds $A_{B}$ as a smooth cubic curve in $\mathbb{P}^2$.
\end{itemize}

This sheds light on the relation between univariate discrete Gaussians and their moments. Theorem \ref{mainthm} tells us that when the parameters $u,B$ are real, the discrete Gaussians are uniquely determined by the moments up to order two. In contrast, we are going to show now that arbitrary (that is, complex) univariate discrete Gaussians of a fixed parameter $B$ are determined by the moments up to order three.  

\begin{theorem}\label{thirdmomentellcurves}
  Fix $B \in \mathbb{H}_1$ and let $X_{(u,B)},X_{(u',B)}$ be two discrete Gaussians with the same parameter $B$. If the first three moments coincide, then they have the same distribution. Furthermore, if we consider discrete Gaussians with different parameters $B$, the first two moments are not enough in general.
\end{theorem}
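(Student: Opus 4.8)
The plan is to prove the two assertions by quite different means: the ``three moments suffice'' statement follows formally from the fact that $\phi_{3,B}$ is an \emph{embedding}, while the ``two moments do not suffice'' statement is a genuine non-injectivity that I would extract from a univalence obstruction.

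For the first assertion I would begin by passing from moments to cumulants. Since $\kappa_1=\mu_1$, $\kappa_2=\mu_2-\mu_1^2$ and $\kappa_3=\mu_3-3\mu_1\mu_2+2\mu_1^3$, the hypothesis that the first three moments of $X_{(u,B)}$ and $X_{(u',B)}$ agree is equivalent to $\kappa_1=\kappa_1'$, $\kappa_2=\kappa_2'$ and $\kappa_3=\kappa_3'$. Recall from the discussion preceding the theorem that for $d=3$ the statistical map $\phi_{3,B}=[\theta^3,\theta^3\kappa_2,\theta^3\kappa_3]$ is a closed embedding of $A_B$ into $\mathbb{P}^2$ as a smooth plane cubic. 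Since $(u,B),(u',B)\notin\Theta_g$ we have $\theta\neq 0$ at both points, so in the chart where the first coordinate is nonzero $\phi_{3,B}([u])=[1:\kappa_2:\kappa_3]$; matching $\kappa_2,\kappa_3$ gives $\phi_{3,B}([u])=\phi_{3,B}([u'])$, and injectivity of the embedding forces $[u]=[u']$ in $A_B$, i.e. $u'=u+im+Bn$ for some $m,n\in\mathbb{Z}$. The one datum not yet used is $\kappa_1=\mu_1$, which is exactly the non-translation-invariant quantity that separates a distribution from its integer translates: by Proposition \ref{quasiperiodicitydg} we have $X_{(u',B)}\sim X_{(u,B)}+n$, hence $\mu_1'=\mu_1+n$, and $\mu_1=\mu_1'$ forces $n=0$. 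Then $u'=u+im$ and Proposition \ref{equivalencegaussians} yields $X_{(u,B)}\sim X_{(u',B)}$.

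For the second assertion I only need one pair of inequivalent discrete Gaussians, with different $B$, sharing the first two moments, and I would look for them inside the one-parameter mean-zero family $\{X_{(0,B)}\}_{B\in\mathbb{H}_1}$. By the parity of Remark \ref{parityfunctions} each $X_{(0,B)}$ has mean $0$, so the first moments already coincide and it remains only to match variances. Writing $q=e^{-\pi B}$, which is a biholomorphic coordinate identifying the space of inequivalence classes $\mathbb{H}_1/2i\mathbb{Z}$ (this is Proposition \ref{equivalencegaussians} with $u=u'=0$) with the punctured disk $0<|q|<1$, Proposition \ref{explicitmoments} and $\mu_1=0$ give the variance as
\[
v(q)=\frac{\sum_n n^2 q^{n^2}}{\sum_n q^{n^2}}=2q-4q^2+8q^3-8q^4+\cdots,
\]
a holomorphic function on the full disk with $v(0)=0$. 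Any $q_1\neq q_2$ with $v(q_1)=v(q_2)$ then produce inequivalent distributions $X_{(0,B_1)},X_{(0,B_2)}$ with identical mean and variance, which is precisely a failure of identifiability from the first two moments. Thus the entire assertion reduces to the statement that $v$ is \emph{not injective}.

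The hard part is exactly this non-injectivity, and I would settle it with the extremal case of the Bieberbach coefficient inequality. The normalized function $f(q)=v(q)/2=q-2q^2+4q^3-\cdots$ has second coefficient $a_2=-2$, so $|a_2|=2$; by Bieberbach's theorem a univalent (schlicht) function on the disk saturates $|a_2|\le 2$ only if it is a rotation of the Koebe function, which for $a_2=-2$ is $q/(1+q)^2=q-2q^2+3q^3-\cdots$. Since the third coefficient of $f$ equals $4\neq 3$, $f$ is not this Koebe function, hence not univalent, and neither is $v$. A small supplementary remark, using $v'(0)=2\neq 0$ so that $v$ is injective near the origin, ensures the two coinciding parameters can be taken nonzero, i.e. genuinely inequivalent in $\mathbb{H}_1/2i\mathbb{Z}$. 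The main obstacle is therefore concentrated entirely in this second part: converting the informal ``not enough in general'' into the quantitative assertion that the variance map fails univalence, for which the equality case of Bieberbach's inequality supplies a clean and rigorous certificate.
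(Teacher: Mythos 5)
Your proof of the first assertion is essentially the paper's own: pass to cumulants, use that $\phi_{3,B}=[\theta^3,\theta^3\kappa_2,\theta^3\kappa_3]$ is a closed embedding to force $u'=u+im+Bn$, then use the first moment and Proposition \ref{quasiperiodicitydg} to kill the translation $n$. For the second assertion, however, you take a genuinely different and, I believe, correct route. The paper works on the full two-dimensional parameter space: it computes the Jacobian of $(u,B)\mapsto(\mu,\Sigma)$, uses the cubic relation of Proposition \ref{equationcubic} and its $u$-derivative to reduce $\det D\Phi$ to $-\tfrac{b(B)}{2}\nu_2+c(B)$, checks by a \texttt{SAGE} evaluation that $b(B)\not\equiv 0$, and concludes that the Jacobian vanishes somewhere, which is incompatible with injectivity of a holomorphic map between equidimensional complex manifolds. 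You instead restrict to the one-dimensional slice $u=0$, identify the inequivalence classes with the punctured $q$-disk via $q=e^{-\pi B}$ (your computation of the $2i\mathbb{Z}$-periodicity from Proposition \ref{equivalencegaussians} is right), and reduce everything to non-univalence of the explicit series $v(q)=2q-4q^2+8q^3-\cdots$, certified by the equality case of Bieberbach's $|a_2|\le 2$ together with $a_3=4\neq 3$. I verified your coefficients. What your approach buys is a computer-free, fully explicit certificate of non-identifiability, at the cost of importing a classical but nontrivial theorem from geometric function theory; the paper's approach stays internal to the theta-function formalism but leans on a numerical verification. Two small points you should make explicit: (i) to place $f=v/2$ in the class $S$ you need $v$ holomorphic on the \emph{entire} open disk, i.e.\ $\theta(0,B)\neq 0$ for all $B\in\mathbb{H}_1$ — true because the theta divisor of an elliptic curve is the single $2$-torsion point $\tfrac12 i+\tfrac{B}{2}$, but it deserves a sentence; (ii) your closing remark that the two coinciding parameters can be taken nonzero needs the open mapping theorem near both preimages of the repeated value, not just injectivity of $v$ near $0$, though the argument is routine once stated.
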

\begin{proof}
We prove now the first part. If the first three central moments coincide, then the cumulants $\kappa_2,\kappa_3$ coincide as well. This means that the two points $u,u'\in A_{B}$ have the same image under the map $\phi_{3,B}\colon A_{B} \to \mathbb{P}^2$. But we know from the previous discussion that this map is injective, hence, it must be that $u,u'$ are the same point in $A_{B}$, which is to say that $u'=u+im+B n$ for certain $n,m \in \mathbb{Z}$. Then, Proposition \ref{quasiperiodicitydg} shows that $X_{(u',B)} \sim X_{(u,B)} + n$, but since $X_{(u,B)},X_{(u',B)}$ have the same mean, it must be that $n=0$, and we conclude. 
\end{proof}

A statistical consequence is that all higher moments of a Gaussian distribution are uniquely determined by the first three. We can make this explicit using the equation of the cubic curve $\phi_{3,B}(A_{B})\subseteq \mathbb{P}^2$.

To compute this equation we set some notations. For every fixed $B \in \mathbb{H}_1$, we consider the three points $0,\frac{1}{2}i,\frac{B}{2} \in \mathbb{C}$. If we take their images in $A_{B}$, these give three of the $2$-torsion points of $A_{B}$. The other torsion point is given  by the image of $\frac{1}{2}i+\frac{B}{2}$, which coincides with the theta divisor $\Theta_{B}\subseteq A_{B}$ \cite[Lemma 4.1]{MUM1}. Now, we define 
\begin{equation}
e_1(B) := D^2_{u}(\log \theta(u,B))\left(0,B \right), \quad e_2(B) := D^2_{u}(\log \theta(u,B))\left( \frac{1}{2}i, B \right)
\end{equation}
$$e_3(B) := D^2_{u}(\log \theta(u,B))\left( \frac{B}{2}, B \right) $$
and the quantities
\begin{equation}
a(B) = \frac{1}{\pi^2}(e_1+e_2+e_3), \qquad
  b(B) = -\frac{1}{4\pi^4}(e_1e_2+e_1e_3 + e_2e_3), \qquad
  c(B) = \frac{1}{16\pi^6}e_1e_2e_3.
\end{equation}

\begin{proposition}\label{equationcubic}
  Fix $B\in\mathbb{H}_1$. Then the image of the map $\phi_{3,B}\colon A_{B} \to \mathbb{P}^2_{[X_0,X_1,X_2]}$ is the cubic
  \begin{equation}
   X_0X_2^2 = -4X_1^3 + a(B)X_0X_1^2 + b(B)X_0^2X_1 + c(B)X_0^3.
 \end{equation}
 Equivalently, for each $(u,B) \in \mathbb{C}\times \mathbb{H}_1\setminus \Theta_1$ we have the relation
  \begin{equation}
   \kappa_3^2 = -4\kappa_2^3 + a(B)\kappa_2^2 + b(B)\kappa_2 + c(B).
  \end{equation}
\end{proposition}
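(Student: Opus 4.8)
The plan is to recognize that, for $g=1$, the cumulants $\kappa_2$ and $\kappa_3$ are essentially the Weierstrass $\wp$-function and its derivative on the elliptic curve $A_B$, so that the asserted relation is nothing but the Weierstrass differential equation, with coefficients read off from the two-torsion. By Proposition \ref{explicitmoments} we have $\kappa_2 = \frac{1}{(2\pi)^2}W$ and $\kappa_3 = \frac{1}{(2\pi)^3}W'$, where $W(u) = D^2_u(\log\theta(u,B))$. Differentiating the quasiperiodicity relation (\ref{quasiperiodicitytheta}) twice shows that $W$ is genuinely $\Lambda_B$-periodic, hence an elliptic function on $A_B$; since $\theta$ is even in $u$, so is $W$. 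As $\theta$ has a single simple zero on $A_B$, at the point $u_0 = \frac{1}{2}i + \frac{B}{2}$ representing $\Theta_B$, the function $W$ has a unique pole there, of order exactly $2$, with principal part $-1/w^2$ in the local coordinate $w = u - u_0$.

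Next I would fix the shape of the relation by bookkeeping of parities and pole orders. By Remark \ref{parityfunctions}, $\kappa_2$ is even and $\kappa_3$ is odd as functions of $u$; equivalently they are even and odd about $u_0$, since $2u_0 \in \Lambda_B$. Thus $\kappa_3^2$ and each power $\kappa_2^j$ are even elliptic functions with poles only along $\Theta_B$, of orders $6$ and $2j$ respectively. The space of even elliptic functions with a pole of order at most $6$ at $u_0$ is $4$-dimensional, with basis $\{1,\kappa_2,\kappa_2^2,\kappa_2^3\}$ (their pole orders $0,2,4,6$ are distinct). Hence $\kappa_3^2 = A\kappa_2^3 + a'\kappa_2^2 + b'\kappa_2 + c'$ for some constants depending on $B$, and comparing the leading Laurent coefficients at $u_0$, where $\kappa_3^2 \sim 4/\big((2\pi)^6 w^6\big)$ and $\kappa_2^3 \sim -1/\big((2\pi)^6 w^6\big)$, forces $A = -4$.

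To determine $a',b',c'$ I would evaluate at the three even two-torsion points $0,\frac{1}{2}i,\frac{B}{2}$, i.e. the points of $A_B$ other than $u_0$ fixed by $u\mapsto -u$. Since $W$ is even about each of them (their doubles lie in $\Lambda_B$), its derivative vanishes, so $\kappa_3 = 0$ there, while $\kappa_2$ takes the values $e_1/(2\pi)^2, e_2/(2\pi)^2, e_3/(2\pi)^2$. Therefore the cubic $-4t^3 + a't^2 + b't + c'$ in $t = \kappa_2$ vanishes at these three values, which are distinct because $\phi_{3,B}$ is injective; this gives the factorization $-4\prod_{j}\big(t - e_j/(2\pi)^2\big)$, and expanding by Vieta's formulas returns precisely $a' = a(B)$, $b' = b(B)$, $c' = c(B)$. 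The homogeneous statement follows by taking Zariski closures: every affine point $[1:\kappa_2:\kappa_3]$ lies on the cubic, the remaining point $u_0$ maps to $[0:0:1]$, which also satisfies the equation, and since $\phi_{3,B}$ embeds $A_B$ as an irreducible plane cubic the image is exactly this curve.

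The step needing the most care is the reduction in the second paragraph: that $\kappa_3^2$ lies in the span of $1,\kappa_2,\kappa_2^2,\kappa_2^3$, with no odd or mixed contributions. This is exactly where evenness and the pole-order count do the work. If one prefers to avoid the dimension argument, the same conclusion follows explicitly from the identity $W(u) = -\wp(u-u_0) + \mathrm{const}$ together with $\wp'^2 = 4\wp^3 - g_2\wp - g_3$, after which the third and fourth paragraphs simply re-express $g_2,g_3$ and the additive constant through the two-torsion values $e_i(B)$. The only other point to check is that the $e_i(B)$ are pairwise distinct, which holds because $\phi_{3,B}$ is an embedding (equivalently, the cubic is smooth), so the factorization does not degenerate.
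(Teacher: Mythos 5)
Your proof is correct, and it shares the paper's overall skeleton --- a Riemann--Roch dimension count in a space of functions with poles only along $\Theta_B$, a parity argument, and evaluation at the three even two-torsion points followed by Vieta --- but the execution differs in a couple of worthwhile ways. The paper starts from the combination $\nu_3^2+4\nu_2^3$ (with $\nu_a=(2\pi)^a\kappa_a$), verifies \emph{by computer} that the order-$6$ poles cancel so that this function lands in the four-dimensional space $H^0(A_B,4\Theta_B)$ with basis $1,\nu_2,\nu_3,\nu_2^2$, and then uses parity to kill the $\nu_3$ term. You instead work directly in the four-dimensional even part of $H^0(A_B,6\Theta_B)$, spanned by $1,\kappa_2,\kappa_2^2,\kappa_2^3$, and extract the coefficient $-4$ by comparing the leading Laurent coefficients of $\kappa_3^2$ and $\kappa_2^3$ at the simple zero of $\theta$; this eliminates the computer verification and makes the argument self-contained (it is, as you say, the classical derivation of the Weierstrass equation for $\wp$). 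Your justification that $\kappa_3$ vanishes at $0,\frac{1}{2}i,\frac{B}{2}$ --- evenness of $D_u^2\log\theta$ about each half-period --- is the same fact the paper extracts from the ramification of $\phi_{2,B}$ at the two-torsion points. One point where you are actually more careful than the paper: you check that the three values $e_i(B)$ are pairwise distinct (via injectivity of $\phi_{3,B}$), which is needed to conclude that they account for \emph{all} roots of the cubic before applying Vieta; the paper passes over this silently.
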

\begin{proof}
  For notational clarity we set $\nu_a = (2\pi )^a\kappa_a = D_u^a\log\theta$. Now, we fix $B$ and we consider the meromorphic function on $A_{B}$ given by $\nu_3^2+4\nu_2^3$: we can check, for example by a computer, that this has a pole of order at most four along $\Theta_{B}$. Let $H^0(A_{B},4\Theta_{B})$ be the space of meromorphic functions on $A_{B}$ with poles at most of order $4$ along $\Theta_{B}$. This space has dimension $4$ by the classical theorem of Riemann-Roch, and the functions $1,\nu_2,\nu_3,\nu_2^2$ are linearly independent elements in $H^0(A_{B},4\Theta_{B})$, since they have poles of different order along $\Theta_{B}$. Hence, they form a basis of the space, so that
  \begin{equation}
  \nu_3^2 = -4\nu_2^3+a(B)\cdot\nu_2^2+b(B)\cdot\nu_2+c(B)+d(B)\cdot\nu_3
\end{equation}
for certain $a(B),b(B),c(B),d(B)$ independent of $u$. Now we observe that the functions $\nu_3^2$ and $-4\nu_2^3+a(B)\nu_2^2+b(B)\nu_2+c(B)$ are even, whereas the function $d(B)\cdot\nu_3$ is odd. Hence, it must be that $d(B)=0$. The equation reduces to:
  \begin{equation}\label{cubic}
  \nu_3^2 = -4\nu_2^3+a(B)\nu_2^2+b(B)\nu_2+c(B).
\end{equation}
Now, consider the map $\phi_{2,B} = [1,\nu_2]\colon A_{B} \to \mathbb{P}^1$: we know from a previous discussion that this is ramified precisely at the two torsion points of $A_{B}$.  Since $D_u(\nu_2)=\nu_3$, this means precisely that $\nu_3(0)=\nu_3(\frac{1}{2}i)=\nu_3(\frac{B}{2})=0$. Plugging this into (\ref{cubic}), we get that $e_1=\nu_2(0),e_2=\nu_2(\frac{1}{2}i),e_3=\nu_2(\frac{B}{2})$ are roots of the polynomial $-4x^3+a(B)x^2+b(B)x+c(B)$. Then, we can write
\begin{equation}\label{abc}
a(B) = 4(e_1+e_2+e_3), \qquad b(B)=-4(e_1e_2+e_1e_3+e_2e_3), \qquad c(B)=4e_1e_2e_3.
\end{equation}
To conclude, it is enough to replace $\nu_a=(2\pi )^{a}\kappa_a$.
\end{proof}

\begin{remark}
  This relation tells us that the second cumulant determines the third cumulant up to a sign. We can also get the higher cumulants as follows: using the same notation as in the previous proof, consider the equation
  \begin{equation}
  \nu_3^2 = -4\nu_2^3+a(B)\nu_2^2+b(B)\nu_2+c(B).
\end{equation}
Then, taking derivatives with respect to $u$ we get
\begin{equation}\label{quadric}
 \nu_4 = -6\nu_2^2 + a(B)\nu_2 + \frac{b(B)}{2}.
\end{equation}
Passing to the cumulants, this gives a formula to compute the fourth cumulant from the second. Differentiating again, we get a formula for the fifth cumulant and so on. All together, this gives explicit formulas to compute the higher moments, starting from the first three.

It is worth mentioning that the equation we obtain in Proposition \ref{equationcubic} is the same as the one obtained through the Weierstrass $\wp$-function \cite[Section 1.6]{MUM1}. In addition, the relations we obtain by differentiating can be interpreted through the Korteweg-de Vries equation, as in \cite[Section 1.6]{MUM1}.
\end{remark}

As an application of the cubic equation of Proposition \ref{equationcubic}, we are going to show that univariate discrete Gaussians of arbitrary parameters $u,B$ are not uniquely determined by their moments up to order two.

To do this, we consider the map
\begin{equation}
\Phi\colon (\mathbb{C} \times \mathbb{H}_1)\setminus \Theta_1 \to \mathbb{C}^2 \qquad (u,B) \mapsto (\mu, \Sigma)= \left( \mu[X_{(u,B)}], \operatorname{Var}[X_{(u,B)}] \right).
\end{equation}
This assigns to each couple of parameters $(u,B)$ the mean and the covariance of the corresponding discrete Gaussian distribution. By construction, this map factors through the space $\mathcal{G}_1$ of all univariate discrete Gaussian distributions. Hence, we get a map
\begin{equation}
\widetilde{\Phi}\colon \mathcal{G}_1 \to \mathbb{C}^2
\end{equation}
and we are asking whether this map is injective. To show that this is not the case, we conclude the proof of Theorem \ref{thirdmomentellcurves}.

\begin{proof}[Proof of Theorem \ref{thirdmomentellcurves}, second part]
We know from Remark \ref{remarkequivalence} that $\mathcal{G}_1$ is a complex manifold of dimension two. Hence,  if $\widetilde{\Phi}$ is injective, its differential should be everywhere an isomorphism between tangent spaces.  To check this, we may as well work with $\Phi$, for which we can compute the differential explicitly. We have
\begin{equation}
\Phi(u,B) = \left(\mu[X_{(u,B)}],\operatorname{Var}[X_{(u,B)}]\right) =  \left(\frac{1}{(2\pi)} \nu_1, \frac{1}{(2\pi )^2} \nu_2 \right)
\end{equation}
where $\nu_1,\nu_2$ are the same as in the proof of Proposition \ref{equationcubic}. Up to multiplication by nonzero constants, the differential of $\Phi$ is given by
\begin{equation}
D\Phi =  \begin{pmatrix} D^1_u\nu_1 & D_{B}^1\nu_1 \\ D^1_u\nu_2  & D^1_B \nu_2    \end{pmatrix} = \begin{pmatrix} \nu_2 & D_{B}^1\nu_1 \\ \nu_3  & D^1_B \nu_2    \end{pmatrix}.
\end{equation}
However, we see that $D_{B}^1\nu_1 = D_{B}^1D^1_u(\log \theta) = D^1_u D^1_B(\log \theta)$ and the same holds for $\nu_2$. Then, 
we can use the heat equation (\ref{heatequation}), and we get that
\begin{equation}
D^1_{B}(\log \theta) = \frac{D^1_{B}\theta}{\theta} = -\frac{1}{4\pi } \frac{D_u^2\theta}{\theta} = -\frac{1}{4\pi }\left( D^2_u(\log\theta) + D^1_u(\log\theta)^2 \right) = -\frac{1}{4\pi }(\nu_2+\nu_1^2)
\end{equation}
Thus, up to multiplying by a nonzero constant, we can write
\begin{align}
  \det D\Phi & = \det \begin{pmatrix} \nu_2 & \nu_3 + 2\nu_1\nu_2 \\ \nu_3  & \nu_4 + 2\nu_2^2 + 2\nu_1\nu_3    \end{pmatrix} \\
             & = \det \begin{pmatrix} \nu_2 & \nu_3  \\ \nu_3  & \nu_4 + 2\nu_2^2     \end{pmatrix} \\
  &  = \det \begin{pmatrix} \nu_2 & \nu_3 \\ \nu_3  & \nu_4 + 2\nu_2^2 \end{pmatrix} = \nu_2\nu_4 + 2\nu_2^3 -\nu_3^2. 
\end{align} 
Using the equations (\ref{cubic}) and (\ref{quadric}), we get that:
\begin{equation}
\nu_2\nu_4 + 2\nu_2^3 -\nu_3^2 = -\frac{b(B)}{2}\nu_2 + c(B)
\end{equation}
where $b(B),c(B)$ are as in the proof of Proposition \ref{equationcubic}. We want to show that this vanishes on some points of $\mathbb{C}\times \mathbb{H}_1\setminus \Theta_1$. It is enough to show that $b(B)$ is not identically zero. Indeed, if we fix a $B$ such that $b(B)\ne 0$, the function $-\frac{b(B)}{2}\nu_2 + c(B)$ is a meromorphic function on $A_{B}$ with poles of order two along the theta divisor $\Theta_{B}$, so that it must have zeroes on $A_{B}\setminus \Theta_{B}$. A \texttt{SAGE} computation reveals that $b(B)$ is indeed not identically zero (by evaluating its explicit expression \eqref{abc}), and we are done. 
\end{proof}

\subsection{The case $g=2$} In the case of dimension two the situation becomes  more complicated.

First, let's fix a parameter $B \in \mathbb{H}_2$ such that the theta divisor $\Theta_{B}$ is irreducible. In this case \cite[Corollary 11.8.2]{BL} $\Theta_{B}$ is also smooth: more precisely, $\Theta_{B}$ is a smooth curve of genus $2$ and $A_{B}$ is its Jacobian variety.

\begin{remark}\label{gaussmapabsurf}
It will be useful to single out some results about the Gauss map
\begin{equation}
\gamma\colon \Theta_{B} \longrightarrow \mathbb{P}^1
\end{equation}
in this situation. We have seen before that this map corresponds to the one induced by the line bundle $\mathcal{O}_{\Theta_{B}}(\Theta_{B})$. Since $\Theta_{B}$ is a smooth curve, the adjunction formula \cite[Formula II.8.20]{H} shows that $\mathcal{O}_{\Theta_{B}}(\Theta_{B})$ is the canonical bundle of $\Theta_{B}$. Since $\Theta_{B}$ has genus two, it follows that  $\gamma$ is a double cover of $\mathbb{P}^1$. We can identify the fibers explicitly: indeed we see from the explicit form (\ref{gaussmap})  that the Gauss map  is invariant under the involution $u\mapsto -u$ of $\Theta_{B}$. Since this map has degree precisely two by the previous discussion, it follows that the fibers of $\gamma$ consist precisely of opposite points $u,-u$, with $u\in \Theta_{B}$.
\end{remark}

Now set $d\geq 2$. Since $\Theta_{B}$ is smooth, Corollary \ref{globalgeneration} shows that the map 
\begin{equation}
\phi_{B,d}\colon A_{B} \to \mathbb{P}^{N_{2,d}}, \qquad N_{2,d} = \frac{(d+2)(d+1)}{2}-3
\end{equation}
is everywhere defined. Moreover, since $\Theta_{B}$ is irreducible, Remark \ref{projection} shows that this map can be realized as a composition
\begin{equation}
A_{B} \overset{\psi_{d,B}}{\longrightarrow} \mathbb{P}^{d^2-1} \overset{\pi}{\dashrightarrow} \mathbb{P}^{N_{2,d}}
\end{equation}
where $\pi$ is a linear projection. We study this situation when $d=2$ and $d=3$.
\begin{itemize}
\item $d=2$. In this case, we see that $d^2-1=N_{2,d}=3$, so that the projection $\pi$ must be the identity. Hence we can identify the map $\phi_{2,B}$ with $\psi_{2,B}$. This map is well studied: it induces a degree 2 cover of $A_{B}$ onto a quartic surface $X_B \subseteq \mathbb{P}^3$ with $16$ nodes. This surface is called the \textit{Kummer surface} of $A_{B}$, and the map $\phi_{2,B}\colon A_{B} \to X_{B}$ realizes it as the quotient of $A_{B}$ under the involution $u\mapsto -u$. In particular, the 16 nodes correspond to the points of order two in $A_{B}$.

\item $d=3$. In this case, we see that $N_{2,d}=7$, whereas $d^2-1 = 8$. Hence,  the map $\phi_{3,B}$ is the composition of the closed embedding $\psi_{3,B}\colon A_{B} \hookrightarrow \mathbb{P}^8$, with a projection $\pi\colon \mathbb{P}^8\dashrightarrow \mathbb{P}^7$  from a point $P$. We want to identify geometrically this point. To do this, take any $u\in \Theta_{B}$ which is not a point of order two: then we claim that $\phi_{3,B}(u)=\phi_{3,B}(-u)$. Taking this claim for granted, we can rephrase it by saying that the two points $\psi_{3,B}(u),\psi_{3,B}(-u)$ must lie on a common line with the point $P$. Hence, the point $P$ can be characterized as the common intersection point of all the lines in $\mathbb{P}^8$ spanned by the couples $\psi_{3,B}(u),\psi_{3,B}(-u)$, for $u\in \Theta_{B}$.

  To conclude we need to prove the claim. We know that the restriction of $\phi_{3,B}$ to the theta divisor $\Theta_{B}$ is the composition of the Gauss map $\gamma\colon \Theta_{B} \to \mathbb{P}^1$ with a closed embedding. Hence, it is enough to show that $\gamma(u)=\gamma(-u)$ for all $u\in \Theta_{B}$. However, this follows from the discussion in Remark \ref{gaussmapabsurf}. 
\end{itemize}

What if $\Theta_{B}$ is reducible? Then \cite[Corollary 11.8.2]{BL} shows that the polarized abelian surface $(A_{B},\Theta_{B})$ must be a product of elliptic curves. If we want to study the geometry of the maps $\phi_{d,B}$, we can use Proposition \ref{statisticgeometry}, and  assume that $B = \left(\begin{smallmatrix} B_1 & 0 \\ 0 & B_2 \end{smallmatrix}\right)$, so that $(A_{B},\Theta_{B}) = (A_{B_1},\Theta_{B_1}) \times (A_{B_2},\Theta_{B_2})$. By Proposition \ref{independence}, this corresponds to the case of two independent discrete Gaussians, and in particular it is straightforward to compute the cumulants $\kappa_{(a_1,a_2)}$.
We have that
$\kappa_{(a_1,0)}((u_1,u_2),B) = \kappa_{a_1}(u_1,B_1), \kappa_{(0,a_2)}((u_1,u_2),B) = \kappa_{a_2}(u_1,B_1)$, and the mixed cumulants $\kappa_{(a_1,a_2)}((u_1,u_2),B)$ vanish whenever one of $a_1,a_2$ is nonzero.
Hence,  we can write the rational maps $\phi_{d,B}$ as the composition of the two maps
\begin{equation}\label{mapreducibletheta}
A_{B_1}\times A_{B_2} \overset{\phi_{d,B_1}\times \phi_{d,B_2}}{\longrightarrow} \mathbb{P}^{d-1}\times \mathbb{P}^{d-1} \overset{f_d}{\dashrightarrow} \mathbb{P}^{2d-2},
\end{equation}
where the second map $f_d$ is given by
\begin{equation}
([X_0,\dots,X_{d-1}],[Y_0,\dots,Y_{d-1}]) \mapsto [X_0Y_0,Y_0X_1,\dots,Y_0X_{d-1},X_0Y_1,\dots,X_0Y_{d-1}]   
\end{equation}

\begin{remark}\label{mapreduciblethetacomplement}
In particular, we observe that the map $\phi_{d,B_1}\times \phi_{d,B_2}$ sends the open subset $A_{B}\setminus \Theta_{B}$ into the open subset $\{ X_0 \ne 0 \} \times \{ Y_0\ne 0 \}$, and the map $f_d$ restricted to this subset is an isomorphism onto the image. 
\end{remark}

As in Theorem \ref{thirdmomentellcurves}, we can use the statistical map $\phi_{3,B}$ to see that discrete Gaussians of dimension $2$ with the same parameter $B$ are determined by the moments up to order three. 

\begin{theorem}\label{thirdmomentabsurf}
Fix $B \in \mathbb{H}_2$ and let $X_{(u,B)},X_{(u',B)}$ be two discrete Gaussians with the same parameter $B$. If all the moments up to order three coincide, then they have the same distribution.
\end{theorem}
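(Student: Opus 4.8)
The plan is to reproduce the architecture of the proof of Theorem \ref{thirdmomentellcurves}, but to compensate for the fact that $\phi_{3,B}$ is no longer a closed embedding when $g=2$: instead of reading off the point of $A_B$ from a single injective map, I would first pin down $u'$ up to the involution $u\mapsto -u$ using the second moments, and then use the third moments to remove this last ambiguity. First I would restate the hypothesis in terms of cumulants. Since the moments of $X_{(u,B)}$ and $X_{(u',B)}$ agree up to order three, so do the central moments up to order three, hence by Proposition \ref{explicitmoments} the cumulants $\kappa_a$ with $|a|\in\{1,2,3\}$ coincide at $u$ and $u'$; in particular the means agree, and $\kappa_a(u)=\kappa_a(u')$ for $|a|=2$ and for $|a|=3$. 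As $u,u'\notin\Theta_B$ we may divide the homogeneous coordinates by $\theta^2$ and by $\theta^3$, so matching of the second cumulants says precisely that $\phi_{2,B}(u)=\phi_{2,B}(u')$, and matching of the third cumulants gives $\phi_{3,B}(u)=\phi_{3,B}(u')$.

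If $\Theta_B$ is reducible, then $(A_B,\Theta_B)$ is a product of elliptic curves. By Proposition \ref{independence} and Remark \ref{mapreduciblethetacomplement}, on the open set $A_B\setminus\Theta_B$ the map $\phi_{3,B}$ factors as the product of the two elliptic embeddings $\phi_{3,B_1}\times\phi_{3,B_2}$ followed by $f_3$, which is an isomorphism onto its image there; hence $\phi_{3,B}$ is injective on $A_B\setminus\Theta_B$ and $\phi_{3,B}(u)=\phi_{3,B}(u')$ already forces $u\equiv u'$ in $A_B$.

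If $\Theta_B$ is irreducible I would instead exploit $\phi_{2,B}=\psi_{2,B}$, the map attached to the complete linear system $|2\Theta_B|$, i.e. the Kummer map realizing $A_B$ as a double cover of its image via the involution $u\mapsto -u$. Thus $\phi_{2,B}(u)=\phi_{2,B}(u')$ yields $u'\equiv u$ or $u'\equiv -u$ in $A_B$. In the second case the oddness of the third cumulants (Remark \ref{parityfunctions}) combined with $\kappa_a(u)=\kappa_a(u')=\kappa_a(-u)=-\kappa_a(u)$ forces $\kappa_a(u)=0$ for all $|a|=3$. Since $\Theta_B$ is irreducible, Lemma \ref{nondegeneracy} shows that the four sections $\theta^3\kappa_a$ with $|a|=3$ are linearly independent; as they lie in the odd part $H^0(A_B,3\Theta_B)^-$, which has dimension four, they span it, so every odd section of $3\Theta_B$ vanishes at $u$. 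Hence $\psi_{3,B}(u)$ lies in $\mathbb{P}\big(H^0(3\Theta_B)^+\big)$, which is fixed pointwise by the involution, so $\psi_{3,B}(u)=\psi_{3,B}(-u)$; as $\psi_{3,B}$ is a closed embedding this gives $u\equiv -u$, i.e. $u$ is a $2$-torsion point and again $u'\equiv u$ in $A_B$.

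In every case we arrive at $u'\equiv u$ in $A_B$, that is $u'=u+im+Bn$ for some $m,n\in\mathbb{Z}^2$. Quasiperiodicity (Proposition \ref{quasiperiodicitydg}) then gives $X_{(u',B)}\sim X_{(u,B)}+n$, and since the means agree we must have $n=0$, so $X_{(u',B)}\sim X_{(u,B)}$. The step I expect to be the main obstacle is the irreducible case: one must justify carefully that the four third-cumulant sections span all odd sections of $3\Theta_B$ (this is where irreducibility of $\Theta_B$, through Lemma \ref{nondegeneracy} and the dimension of the odd eigenspace, is indispensable) and convert the vanishing of the odd coordinates into the identity $\psi_{3,B}(u)=\psi_{3,B}(-u)$ using that $\psi_{3,B}$ is an embedding. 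A secondary point needing care is the identification of $\phi_{2,B}$ with the genuine involution quotient, so that the ambiguity produced by the second moments is exactly $u\mapsto -u$ and nothing more.
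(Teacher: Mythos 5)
Your argument is correct, and in the irreducible case it takes a genuinely different route from the paper's. The paper realizes $\phi_{3,B}$ as the projection of the closed embedding $\psi_{3,B}\colon A_B\hookrightarrow\mathbb{P}^8$ from a single point $P$ and rules out identifications of two points of $A_B\setminus\Theta_B$ by a secant-line argument: if $x,y$ together with a pair $u,-u\in\Theta_B$ spanned a $2$-plane through $P$, then Lemma \ref{lemmaplane} (which rests on Terakawa's theorem and the Hodge index theorem) would force $x,y\in\Theta_B$. You instead split the problem into two stages: the order-two cumulants give $\phi_{2,B}(u)=\phi_{2,B}(u')$, and since $\phi_{2,B}=\psi_{2,B}$ is the Kummer map this pins down $u'$ up to the involution; the residual $\pm$ ambiguity is then removed by the parity of the third cumulants together with the claim that the four sections $\theta^3\kappa_a$, $|a|=3$, span the odd eigenspace of $H^0(A_B,3\Theta_B)$. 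Your reducible case is identical to the paper's, and both routes share the same supporting inputs (Lemma \ref{nondegeneracy}, the identification of $\phi_{d,B}$ with a projection of $\psi_{d,B}$, quasiperiodicity and the comparison of means at the end). The one external fact you need that the paper does not is $\dim H^0(A_B,3\Theta_B)^{-}=4$; this is standard for a symmetric principal polarization (e.g.\ \cite[Corollary 4.6.6]{BL}, or the holomorphic Lefschetz fixed point formula, which gives $h^0(n\Theta_B)^{+}-h^0(n\Theta_B)^{-}=1$ for $n$ odd, hence $h^0(3\Theta_B)^{-}=(3^2-1)/2=4$), and combined with the linear independence of the $\theta^3\kappa_a$ supplied by Lemma \ref{nondegeneracy} the spanning claim follows, so the step you flag as the main obstacle does close. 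What your route buys is a sharper structural statement — for irreducible $\Theta_B$ the second moments already determine the distribution up to the sign involution, and the third moments serve only to break that symmetry — at the price of a cohomological eigenspace count; the paper's route avoids any such count but needs the more delicate intersection-theoretic Lemma \ref{lemmaplane}.
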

\begin{proof}
  Proceeding in the same way as in dimension $1$, it is enough to show that the map $\phi_{3,B}\colon A_{B} \to \mathbb{P}^3$ is injective on the open set $A_{B}\setminus \Theta_{B}$.

  Suppose first that $\Theta_{B}$ is irreducible. If we look at $A_{B}$ as embedded in $\mathbb{P}^8$ by $\psi_{3,B}$, the injectivity of $\phi_{3,B}$ is equivalent to saying that no two points $x,y$ in $A_{B}\setminus \Theta_{B}$ lie on a line passing through the point $P$. Suppose that this happens and take two distinct points $u,-u \in \Theta_{B}$ exchanged by the involution. Then the line that they span is also passing through $P$, so that the four distinct points $x,y,u,-u$ span a two-dimensional plane in $\mathbb{P}^8$. However, Lemma \ref{lemmaplane} below shows that in this case $x,y$ belong to $\Theta_{B}$ as well, which is absurd.

Suppose instead that $\Theta_{B}$ is reducible: then as before we can assume $B = \left(\begin{smallmatrix} B_1 & 0 \\ 0 & B_2 \end{smallmatrix}\right)$, so that  
 the map $\phi_{3,B}$ is a composition of the map $\phi_{3,B_1}\times \phi_{3,B_2}$ and of the map $f_3$,  as in (\ref{mapreducibletheta}). Since $\phi_{3,B_1}\times \phi_{3,B_2}$ is a product of closed embeddings, it is injective. Moreover,  Remark \ref{mapreduciblethetacomplement} shows that the map $f_3$ is injective when restricted to the image of $A_{B}\setminus \Theta_{B}$, and we are done.  
\end{proof}

To conclude the previous proof, we need a more technical lemma from algebraic geometry. In particular, we will make use some intersection theory on surfaces, for which we refer to \cite[Section V.1]{H}.

\begin{lemma}\label{lemmaplane}
  Fix $B\in\mathbb{H}_2$ such that $\Theta_{B}$ is irreducible and consider the embedding $\psi_{3,B}\colon A_{B} \hookrightarrow \mathbb{P}^8$. Take two opposite points $u,-u\in \Theta_{B}$ and other two points $x,y\in A_{B}$.
Suppose that the points $\psi_{3,B}(u),\psi_{3,B}(-u),\psi_{3,B}(x),\psi_{3,B}(y)$ span a plane of dimension two in $\mathbb{P}^8$. Then $x,y$ belong to $\Theta_{B}$ as well.
\end{lemma}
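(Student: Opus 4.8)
The plan is to translate the coplanarity hypothesis into a statement of linear algebra on $V:=H^0(A_B,3\Theta_B)$ (so that $\psi_{3,B}$ is the map to $\mathbb{P}(V^*)=\mathbb{P}^8$) and then to peel it apart using the two natural sub-objects: the image of $H^0(A_B,2\Theta_B)$, and the $(-1)$-eigenspace decomposition. Four points spanning a plane of dimension two means exactly that the evaluation functionals $\mathrm{ev}_u,\mathrm{ev}_{-u},\mathrm{ev}_x,\mathrm{ev}_y\in V^*$ span a three-dimensional subspace, i.e. there is a nontrivial relation $\lambda_1\mathrm{ev}_u+\lambda_2\mathrm{ev}_{-u}+\lambda_3\mathrm{ev}_x+\lambda_4\mathrm{ev}_y=0$. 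I want to show that any such relation forces $x,y\in\Theta_B$.

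First I would exploit that $u,-u\in\Theta_B$. The sections vanishing on $\Theta_B$ are exactly $\theta\cdot H^0(A_B,2\Theta_B)\subset V$, and under $\psi_{2,B}$ this is the Kummer system. Restricting the relation to this subspace kills $\mathrm{ev}_u,\mathrm{ev}_{-u}$ and leaves $\lambda_3\,\mathrm{ev}^{(2)}_x+\lambda_4\,\mathrm{ev}^{(2)}_y=0$ on $H^0(2\Theta_B)$, i.e. $\psi_{2,B}(x)$ and $\psi_{2,B}(y)$ are proportional in $\mathbb{P}^3$. Since $\psi_{2,B}$ is a morphism, if $x\notin\Theta_B$ then $\psi_{2,B}(x)\neq0$, which forces $\psi_{2,B}(x)=\psi_{2,B}(y)$ and hence $x=\pm y$. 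The mixed possibility (say $x\notin\Theta_B$, $y\in\Theta_B$) is excluded separately: then $\lambda_4=0$, and the relation descends, via the restriction $V\to H^0(\Theta_B,3\Theta_B|_{\Theta_B})=H^0(\Theta_B,3K_{\Theta_B})$ used in Lemma \ref{restrictiontotheta}, to a dependence among the three distinct points $u,-u,x$ of the genus-two curve $\Theta_B\hookrightarrow\mathbb{P}^4$; but Riemann--Roch gives $h^0(3K_{\Theta_B}-u-(-u)-x)=2$, so three distinct points on this curve are always linearly independent, a contradiction. Thus either $x,y\in\Theta_B$, as desired, or we are left with the single exceptional configuration $y=-x$ with $x\notin\Theta_B$.

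The remaining case of two opposite pairs $\{u,-u\}$ and $\{x,-x\}$ is where the difficulty concentrates, and I would attack it with the decomposition $V=V^+\oplus V^-$ into $(-1)$-even and $(-1)$-odd sections. Because $\psi_{3,B}\circ(-1)=\bar\iota\circ\psi_{3,B}$ for the induced linear involution $\bar\iota$ of $\mathbb{P}^8$, one may choose lifts with $\psi_{3,B}(\pm u)=(E_u\mid\pm O_u)$ and $\psi_{3,B}(\pm x)=(E_x\mid\pm O_x)$ in even/odd coordinates; a short row reduction then shows that the four points are coplanar if and only if $\mathrm{rk}(E_u,E_x)+\mathrm{rk}(O_u,O_x)\le 3$, that is, if and only if the even map $\sigma$ given by $V^+$ identifies $u$ and $x$, or the odd map $\rho$ given by $V^-$ does. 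The even alternative is immediately impossible: for $u\in\Theta_B$ the coordinates $\theta\cdot(\text{Kummer})$ all vanish, so $\sigma(u)$ lies in the linear subspace where those coordinates are zero, whereas for $x\notin\Theta_B$ one has $\theta(x)\neq0$ and $\psi_{2,B}(x)\neq0$, so $\sigma(x)$ has nonvanishing $\theta\cdot(\text{Kummer})$ coordinates; hence $\sigma(u)\neq\sigma(x)$.

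This pins the whole lemma on ruling out $\rho(u)=\rho(x)$, and I expect this to be the main obstacle. Here I would use that $\rho$ is, up to nonzero constants, the degree-three cumulant map (the odd coordinates $\theta^3\kappa_a$, $|a|=3$), whose restriction to $\Theta_B$ is $v_3\circ\gamma$ by Lemma \ref{restrictiontotheta} and Remark \ref{gaussmapabsurf}; thus $\rho(\Theta_B)$ is the twisted cubic $v_3(\mathbb{P}^1)\subset\mathbb{P}^3$ and $\rho(u)$ lies on it. Since $\rho(u)=\rho(x)$ would place $\rho(x)$ on this twisted cubic, it suffices to prove that $\rho^{-1}(v_3(\mathbb{P}^1))=\Theta_B$, i.e. that the degree-three cumulants of a point off the theta divisor never assemble into the perfect-cube pattern $[\,s^3:s^2t:st^2:t^3\,]$ cutting out the cubic. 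I would establish this by controlling the class of $\rho^{-1}(v_3(\mathbb{P}^1))$ on the surface (the preimage of the quadrics through the twisted cubic lies in $|6\Theta_B|$) and combining this intersection-theoretic count with the two-to-one Gauss-map analysis of Remark \ref{gaussmapabsurf}. The delicate point, and the crux of the argument, is precisely that $\rho$ collapses $\Theta_B$ two-to-one onto the cubic without dragging any point of $A_B\setminus\Theta_B$ onto it.
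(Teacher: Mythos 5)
Your reductions in the first half are essentially sound: passing to the subspace $\theta\cdot H^0(A_B,2\Theta_B)\subset H^0(A_B,3\Theta_B)$ to force $y=\pm x$, excluding the mixed case by Riemann--Roch on the tricanonical curve $\Theta_B\hookrightarrow\mathbb{P}^4$, and the even/odd rank identity for the pair of opposite pairs are all correct (two small repairs: in the mixed case it is $\lambda_3$ that vanishes and the dependence is among the three points $u,-u,y$ of $\Theta_B$, not $u,-u,x$; and the even step needs $E_u\neq 0$, i.e.\ that not every even section of $3\Theta_B$ vanishes at $u$ --- this holds because the even part of $H^0(\Theta_B,3K_{\Theta_B})$ is one-dimensional with divisor the six Weierstrass points, so $E_u=0$ only when $u$ is $2$-torsion, which is excluded). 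The genuine gap is exactly where you locate it: the claim that the odd map $\rho$ given by $V^-=H^0(A_B,3\Theta_B)^-$ satisfies $\rho^{-1}(v_3(\mathbb{P}^1))=\Theta_B$. This is not a routine intersection count. The preimage of the three quadrics through the twisted cubic gives divisors in $|6\Theta_B|$ whose common intersection contains $\Theta_B$ plus an a priori nonempty residual locus, and numbers alone do not kill the residual part. To close the argument you would first have to determine the base locus of $V^-$ (the ten even $2$-torsion points, each reduced --- note all odd sections vanish there, and you also need $O_x\neq 0$ for your non-torsion $x$), deduce that $\rho$ has degree $2$ onto a quartic (Weddle-type) surface containing the twisted cubic, and then rule out both a second component dominating the cubic and contracted curves landing on it. None of this is carried out, and it is the entire content of the lemma.

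The paper avoids this analysis altogether by going through adjunction-type machinery: the coplanarity of the four points is the failure of $3$-very-ampleness of $3\Theta_B$ at a length-$4$ scheme, and Terakawa's Reider-type theorem \cite{TER} then produces an effective divisor $C$ containing $u,-u,x,y$ with $(C^2)=2$ and $(C\cdot\Theta_B)=2$. The Hodge Index Theorem forces $C$ to be numerically equivalent to $\Theta_B$, hence a translate $\Theta_B+c$; and since $C\cap\Theta_B=\{u,-u\}$ is a fibre of the Gauss map, hence a canonical divisor on $\Theta_B$ (Remark \ref{gaussmapabsurf}), one gets $c=0$ and $C=\Theta_B$. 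So the existence of the auxiliary curve is outsourced to a quoted theorem rather than extracted from the linear algebra of $H^0(3\Theta_B)$. Your approach is more self-contained in spirit and the even/odd skeleton is a genuinely different and workable framing, but as written it stops precisely at the step that does the work; either complete the geometry of the odd tricanonical map, or substitute the Reider--Terakawa argument for your final case.
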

\begin{proof}
With these assumptions, the proof of \cite[Theorem 5.7]{TER} shows that the points $u,-u,x,y$ lie on a smooth curve $C\subseteq A_{B}$ of genus two, such that $(C\cdot \Theta_{B})=2$. Since $(C^2)=2$, an application of the Hodge Index Theorem \cite[Theorem V.1.9]{H} shows that $C$ and $\Theta_{B}$ are actually numerically equivalent curves on $A_{B}$. Since $A_{B}$ is an abelian variety,  it follows \cite[Theorem 4.11.1]{BL} that $C$ is actually the translate of $\Theta_{B}$ by a point $c\in A_{B}$, so that $C=\Theta_{B}+c$. If $c=0$, we are done. Suppose that $c\ne 0$: this means \cite[Lemma 11.3.1]{BL} that the restriction $\mathcal{O}_{\Theta_{B}}(C)$ is a line bundle of degree two which is not the canonical line bundle on the curve $\Theta_{B}$. However, the line bundle $\mathcal{O}_{\Theta_{B}}(C)$ corresponds by definition to the divisor $\Theta_{B}\cap C = \{u,-u\}$, and we have seen in Remark \ref{gaussmapabsurf} that this is a canonical divisor on $\Theta_{B}$. This gives a contradiction and we are done.
\end{proof}

\subsection{Open questions}
From these results in dimension one and two, there are natural questions that arise:

\begin{question}
  We have seen in Theorem \ref{thirdmomentellcurves} and Theorem \ref{thirdmomentabsurf} that two Gaussian distributions in dimension $1$ or $2$ with the same parameter $B$ can be distinguished by the moments up to order three. Is this true for every dimension $g$?
\end{question}

 Following the proofs of Theorems \ref{thirdmomentellcurves} and  \ref{thirdmomentabsurf}, a geometric way to prove this would be to show that the maps
  \begin{equation}
  \phi_{3,B}\colon A_{B}\setminus \Theta_{B} \longrightarrow \mathbb{P}^{N_{3,g}}
\end{equation}
are injective for every $B\in\mathbb{H}_g$.
Moreover, it could be that for a fixed parameter $B$, two Gaussian distributions can be distinguished just by the moments up to order two. We would like to identify when this happens too.

We can generalize this question to all Gaussian distributions at the same time:

\begin{question}
  We have seen in Theorem \ref{thirdmomentellcurves} that in general mean and covariance are not enough to determine a discrete Gaussian. Could this be true if we take all moments up to order three? Or up to an higher order $d$?
  Geometrically, this would mean to study the fibers of the map
  \begin{equation}
   \Phi_{d,g}\colon \mathbb{C}^g\times \mathbb{H}_g \setminus \Theta_g \longrightarrow  \mathbb{P}^{N_{d,g}}, \qquad (u,B) \mapsto \phi_{d,B}(u).
  \end{equation}
\end{question}

\begin{remark}
Taking our correspondence further, mixtures of discrete Gaussians correspond to secants of abelian varieties. More precisely, if $B$ is fixed then mixtures of two discrete Gaussians with parameters $(u_1,B), (u_2,B)$ form a secant line in the corresponding abelian variety, while mixtures with $(u_1,B_1), (u_2,B_2)$ lie in a secant line to the universal family $\mathcal{U}_g$. This should be a very interesting connection to explore, with natural recurring questions such as identifiability from moments (the case of continuous Gaussian mixtures is treated in \cite{IDENT}).
\end{remark}

\section*{Acknowledgements}
We would like to give special thanks to Bernd Sturmfels for dreaming and suggesting the connection between multivariate discrete Gaussians and theta functions. We thank Lynn Chua for her help with SAGE computations. Thanks to Samuel Grushevsky for his interest and his valuable comments. Thanks also to Daniel Dadush, Oded Regev and Noah Stephens for offering the CS perspective of discrete Gaussians, and to Robert Gray for his interest in this work. We also thank the anonymous referees who provided constructive feedback. Daniele Agostini was supported by the DAAD, the DFG Graduiertenkolleg 1800, the DFG Schwerpunkt 1489 and the Berlin Mathematical School. He also thanks the Stony Brook Mathematics Department for its hospitality while writing this work. Carlos Am\'endola was supported by the Einstein Foundation Berlin.

\end{document}